\title[Non-commutative thickening]{Non-commutative thickening of moduli spaces of stable sheaves}
\date{}
\author{Yukinobu Toda}
\DeclareFontFamily{U}{rsfs}{%
\skewchar\font127}
\DeclareFontShape{U}{rsfs}{m}{n}{%
<-6>rsfs5<6-8.5>rsfs7<8.5->rsfs10}{}
\DeclareSymbolFont{rsfs}{U}{rsfs}{m}{n}
\DeclareRobustCommand*\rsfs{%
\@fontswitch\relax\mathrsfs}
\theoremstyle{plain}
\newtheorem{thm}{Theorem}[section]
\newtheorem{prop}[thm]{Proposition}
\newtheorem{lem}[thm]{Lemma}
\newtheorem{defi}[thm]{Definition}
\newtheorem{rmk}[thm]{Remark}
\newtheorem{cor}[thm]{Corollary}
\newtheorem{prop-defi}[thm]{Proposition-Definition}
\newtheorem{thm-defi}[thm]{Theorem-Definition}
\newtheorem{lem-defi}[thm]{Lemma-Definition}
\newtheorem{problem}[thm]{Problem}
\newtheorem{assum}[thm]{Assumption}
\newdimen\argwidth
\def\db[#1\db]{
 \setbox0=\hbox{$#1$}\argwidth=\wd0
 \setbox0=\hbox{$\left[\box0\right]$}
  \advance\argwidth by -\wd0
 \left[\kern.3\argwidth\box0 \kern.3\argwidth\right]}
\newcommand{\aA}{\mathcal{A}}
\newcommand{\cC}{\mathcal{C}}
\newcommand{\eE}{\mathcal{E}}
\newcommand{\fF}{\mathcal{F}}
\newcommand{\gG}{\mathcal{G}}
\newcommand{\hH}{\mathcal{H}}
\newcommand{\iI}{\mathcal{I}}
\newcommand{\jJ}{\mathcal{J}}
\newcommand{\lL}{\mathcal{L}}
\newcommand{\mM}{\mathcal{M}}
\newcommand{\nN}{\mathcal{N}}
\newcommand{\oO}{\mathcal{O}}
\newcommand{\pP}{\mathcal{P}}
\newcommand{\sS}{\mathcal{S}}
\newcommand{\tT}{\mathcal{T}}
\newcommand{\vV}{\mathcal{V}}
\newcommand{\wW}{\mathcal{W}}
\newcommand{\Hom}{\mathop{\rm Hom}\nolimits}
\newcommand{\dotimes}{\stackrel{\textbf{L}}{\otimes}}
\newcommand{\dR}{\mathbf{R}}
\newcommand{\dL}{\mathbf{L}}
\newcommand{\Hilb}{\mathop{\rm Hilb}\nolimits}
\newcommand{\Pic}{\mathop{\rm Pic}\nolimits}
\newcommand{\id}{\textrm{id}}
\newcommand{\Ext}{\mathop{\rm Ext}\nolimits}
\newcommand{\Spec}{\mathop{\rm Spec}\nolimits}
\newcommand{\Spf}{\mathop{\rm Spf}\nolimits}
\newcommand{\Coh}{\mathop{\rm Coh}\nolimits}
\newcommand{\cneq}{\mathrel{\raise.095ex\hbox{:}\mkern-4.2mu=}}
\newcommand{\eqcn}{\mathrel{=\mkern-4.5mu\raise.095ex\hbox{:}}}
\newcommand{\gr}{\mathop{\rm gr}\nolimits}
\newcommand{\Aut}{\mathop{\rm Aut}\nolimits}
\newcommand{\Sym}{\mathop{\rm Sym}\nolimits}
\newcommand{\modu}{\mathop{\rm mod}\nolimits}
\newcommand{\Imm}{\mathop{\rm Im}\nolimits}
\newcommand{\Ker}{\mathop{\rm Ker}\nolimits}
\newcommand{\GL}{\mathop{\rm GL}\nolimits}
\newcommand{\n}{\mathrm{nc}}
\begin{document}
\maketitle

\begin{abstract}
We 
show that the moduli spaces of 
stable sheaves on 
projective schemes admit 
certain non-commutative structures,  
which we call quasi NC structures, 
generalizing Kapranov's NC structures. 
The completion of our quasi NC structure 
at a closed point of the moduli space 
gives a pro-representable hull 
of the non-commutative deformation functor 
of the corresponding sheaf 
developed by Laudal, Eriksen, Segal and 
Efimov-Lunts-Orlov. 
We also show that the framed stable moduli spaces of 
sheaves have canonical NC structures. 
\end{abstract}

\section{Introduction}
\subsection{Background}
Let $X$ be a projective scheme 
over $\mathbb{C}$ and $\alpha$
a Hilbert polynomial of some coherent sheaf on $X$. 
It is well-known that (cf.~\cite{Hu}) the isomorphism 
classes of stable sheaves
\begin{align*}
M_{\alpha} \cneq \{\mbox{Stable sheaves on }
X \mbox{ with Hilbert polynomial }
 \alpha\}/(\mbox{isom})
\end{align*}
has a structure of a quasi projective scheme, 
called the \textit{moduli space of stable sheaves}.  
Under some primitivity condition of $\alpha$, 
which we always assume 
in this paper, 
the scheme $M_{\alpha}$ is projective and 
represents the functor of flat families of stable sheaves
on $X$
with Hilbert polynomial $\alpha$.

The moduli space $M_{\alpha}$
plays important roles in several 
places of algebraic geometry, e.g. 
the constructions of holomorphic symplectic manifolds~\cite{Mu2}, 
Fourier-Mukai transforms~\cite{Mu1}
and Donaldson-Thomas invariants~\cite{Thom}.
Recently the moduli scheme $M_{\alpha}$ turned out to 
be the truncation of 
a smooth derived moduli 
scheme~\cite{Toen2}, \cite{BFHR}, 
which has a shifted 
symplectic structure 
if $X$ is a Calabi-Yau manifold~\cite{PTVV}.
The shifted symplectic structure in~\cite{PTVV}
was used in~\cite{BBBJ}
to construct algebraic 
Chern Simons functions describing
$M_{\alpha}$ locally as its critical locus, 
which is crucial 
in the wall-crossing of DT invariants~\cite{JS}, \cite{K-S}. 
It is now an important subject to 
find such hidden structures on the moduli spaces
of stable sheaves, and give applications to 
an enumerative geometry. 
The goal of this paper is 
to construct such a hidden structure 
on $M_{\alpha}$ in a different direction, 
that is a certain \textit{non-commutative 
structure}, whose existence was known only  
at the formal level. 
\subsection{Non-commutative deformations of sheaves}
Let us recall formal deformation theory of sheaves 
in terms of dg-algebras and $A_{\infty}$-algebras.
For example, we refer to the Segal's paper~\cite{ESe}
on some details of this subject. 
 
Let $F$ be a stable sheaf on $X$ giving a closed 
point of $M_{\alpha}$, 
and $\widehat{\oO}_{M_{\alpha}, [F]}$
the completion of $\oO_{M_{\alpha}}$ 
at the closed point $[F] \in M_{\alpha}$. 
The algebra $\widehat{\oO}_{M_{\alpha}, [F]}$
pro-represents
the commutative deformation functor
\begin{align*}
\mathrm{Def}_{F} \colon 
\aA rt^{\rm{loc}} \to \sS et
\end{align*}
sending $R$ to the 
isomorphism classes of flat deformations of $F$ to 
$\fF \in \Coh(X \times \Spec R)$. 
Here $\aA rt^{\rm{loc}}$ is the category of
local Artinian $\mathbb{C}$-algebras. 
It is well-known that 
the functor $\mathrm{Def}_F$
is governed by the 
dg-algebra $\dR \Hom(F, F)$, that 
is 
\begin{align*}
\mathrm{Def}_F(R)=\mathrm{MC}(\dR \Hom(F, F) \otimes \bf{m})/(\mbox{gauge equivalence})
\end{align*}
where ${\bf m} \subset R$ is 
the maximal ideal 
and $\mathrm{MC}(\mathfrak{g}^{\bullet})$ 
for the dg-Lie algebra $\mathfrak{g}^{\bullet}$
is the 
solution of the Maurer-Cartan equation
\begin{align*}
\mathrm{MC}(\mathfrak{g}^{\bullet})
=\left\{ x \in \mathfrak{g}^1 : dx+\frac{1}{2}[x, x]=0 \right\}.
\end{align*}
On the other hand, by the minimal model theorem of $A_{\infty}$-algebras, 
we have a quasi-isomorphism
\begin{align*}
\dR \Hom(F, F)  \sim (\Ext^{\ast}(F, F), \{m_n\}_{n\ge 2})
\end{align*}
where the RHS is 
a minimal $A_{\infty}$-algebra. 
The formal solution of the Mauer-Cartan equation 
of the RHS 
is then $\Spec R_{F}$, where $R_F$ is the commutative algebra
\begin{align}\label{intro:RF}
R_F
=\frac{\widehat{\Sym^{\bullet}}(\Ext^1(F, F)^{\vee})}{\left(\sum_{n\ge 2} 
\overline{m}_n^{\vee}
 \right)}.
\end{align}
Here $\overline{m}_n^{\vee}$ is the dual of the $A_{\infty}$-product
\begin{align}\label{intro:mn}
m_n^{\vee} \colon \Ext^2(F, F)^{\vee} \to \Ext^1(F, F)^{\vee \otimes n}
\end{align}
composed with the 
symmetrization map 
\begin{align}\label{intro:sym}
\Ext^1(F, F)^{\vee \otimes n}
\twoheadrightarrow \Sym^n(\Ext^1(F, F)^{\vee}).
\end{align}
By a general theory of deformation theory, 
$R_F$ also pro-represents $\mathrm{Def}_F$, 
hence we have the isomorphism
\begin{align*}
\widehat{\oO}_{M_{\alpha}, [F]} \cong R_F.
\end{align*}

However the 
dual of the $A_{\infty}$-products (\ref{intro:mn}) take values
in the tensor products
rather than symmetric products, 
so the algebra $R_F$ may lose some information
of the $A_{\infty}$-products
under the maps (\ref{intro:sym}). 
Instead of the commutative 
algebra $R_F$, 
the possibly non-commutative algebra
\begin{align}\label{intro:RFnc}
R_F^{\n} =\frac{\prod_{n\ge 0}(\Ext^1(F, F)^{\vee})^{\otimes n}}{(\sum_{n\ge 2} m_n^{\vee})}
\end{align}
is more natural and
keep the information of 
the $A_{\infty}$-products which the algebra $R_F$ may lose.  
In fact, the algebra (\ref{intro:RFnc}) appears
in the context of 
\textit{non-commutative deformation theory} of sheaves. 
The algebra (\ref{intro:RFnc}) is a
pro-representable hull of the non-commutative deformation functor
\begin{align}\label{intro:DefF}
\mathrm{Def}_{F}^{\n} \colon 
\nN^{\rm{loc}} \to \sS et
\end{align}
where $\nN^{\rm{loc}}$ is the category of 
finite dimensional (not necessary commutative) local 
$\mathbb{C}$-algebras.
The functor
$\mathrm{Def}_F^{\n}$ 
sends $\Lambda \in \nN^{\rm{loc}}$ to the isomorphism
classes of flat deformations of $F$ to 
$\fF \in \Coh(\oO_X \otimes_{\mathbb{C}} \Lambda)$. 
Such a non-commutative deformation theory was 
studied by Laudal~\cite{Lau} 
for modules over algebras, and 
later developed by Eriksen~\cite{Erik}, Segal~\cite{ESe}
and Efimov-Lunts-Orlov~\cite{ELO}, \cite{ELO2}, \cite{ELO3}
in geometric contexts. 

\subsection{Global non-commutative moduli spaces of stable sheaves}
Note that the algebra $R_F^{\n}$ reconstructs
$R_F$ by taking its abelization. 
Therefore at the formal level, 
we see that 
the structure sheaf $\oO_{M_{\alpha}}$ admits 
a possibly non-commutative 
enhancement $R_F^{\rm{nc}}$
in the sense that
\begin{align*}
(R_F^{\rm{nc}})^{ab} \cong \widehat{\oO}_{M_{\alpha}, [F]}. 
\end{align*}
The purpose of this paper is to 
give a globalization of the above isomorphism.  
Namely, we would like to construct
a sheaf of non-commutative algebras 
on $M_{\alpha}$
whose formal completion at $F$
gives the algebra (\ref{intro:RFnc}). 
We formulate this problem using the 
notion of Kapranov's 
\textit{NC schemes}~\cite{Kap17}.
Roughly speaking, an NC scheme is a ringed space 
$(Y, \oO_Y)$
whose structure sheaf $\oO_Y$ 
is possibly non-commutative, 
that is formal in the non-commutative direction.
Its abelization $(Y, \oO_Y^{ab})$ is 
a usual scheme, and $(Y, \oO_Y)$ is interpreted as a 
formal non-commutative thickening of $(Y, \oO_Y^{ab})$. 
We call an NC scheme $(Y, \oO_Y)$
 as an \textit{NC structure} on $(Y, \oO_Y^{ab})$. 
 The following problem 
is the main interest in this paper: 
\begin{problem}\label{intro:problem}
Is there a NC structure 
$(M_{\alpha}, \oO_{M_{\alpha}}^{\n})$ on $M_{\alpha}$ such
that $\widehat{\oO}_{M_{\alpha}, [F]}^{\n} \cong R_F^{\n}$
for any $[F] \in M_{\alpha}$ ?
\end{problem}
The above problem was
addressed by Kapranov~\cite{Kap17}
when any $[F] \in M_{\alpha}$ 
is a vector bundle without 
obstruction space\footnote{However
it was pointed out in~\cite[Remark~4.1.4]{PoTu}
that the proof of~\cite[Proposition~5.4.3]{Kap17}
has a gap. So we do not know whether 
Problem~\ref{intro:problem}
is true or not when any $[F] \in M_{\alpha}$
is a vector bundle without obstruction space.
Also see Remark~\ref{rmk:aut}.}, i.e. $\Ext^2(F, F)=0$, 
and by Polishchuk-Tu~\cite{PoTu}
when $M_{\alpha}$ is the moduli space of line bundles. 
The main result of this paper, 
stated in the following theorem,
 is to prove a weaker version of 
Problem~\ref{intro:problem}:
\begin{thm}\emph{(Theorem~\ref{main:thm})}\label{intro:main}
There exists an affine open covering $\{U_i\}_{i\in \mathbb{I}}$
of $M_{\alpha}$, NC structures $U_{i}^{\n}=(U_i, \oO_{U_i}^{\n})$
on each $U_i$, isomorphisms of NC schemes 
\begin{align}\label{intro:qnc}
\phi_{ij} \colon U_i^{\n}|_{U_{ij}} \stackrel{\cong}{\to} U_j^{\n}|_{U_{ij}},
\ \phi_{ij}^{ab}=\id
\end{align}
such that 
for any $[F] \in U_i$, 
we have $\widehat{\oO}_{U_i, [F]}^{\n} \cong R_F^{\n}$. 
\end{thm}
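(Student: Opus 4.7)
The plan is to globalize the formal Maurer--Cartan construction of $R_F^{\n}$ by reducing to the framed moduli space. Fix $n \gg 0$ so that, uniformly for $\alpha$-stable $F$, the twist $F(n)$ has vanishing higher cohomology, is globally generated, and has $N := \alpha(n)$ global sections. By Simpson's construction, $M_{\alpha}$ is realized as the good GIT quotient $Q^{ss}/\!/\GL_N$ of an open $Q^{ss}$ in the Quot scheme $Q := \Quot(\oO_X(-n)^{\oplus N}, \alpha)$. As asserted in the abstract, the framed moduli space $M_{\alpha}^{\mathrm{fr}} \subset Q$ carries a canonical NC structure whose formal completion at a framed stable sheaf $(F, \phi)$ pro-represents the NC deformation functor of that framed pair; I take this construction as the main input.

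The local construction on $M_{\alpha}$ proceeds by descent through Luna's \'etale slice theorem. For each $[F] \in M_{\alpha}$, pick a closed point $q \in Q^{ss}$ over $[F]$ and an affine locally closed slice $S \subset Q^{ss}$ through $q$, transverse to the $\GL_N$-orbit and on which the (stable) stabilizer $\mathbb{C}^{\ast}$ acts trivially. Then $S \to M_{\alpha}$ is \'etale onto an affine open $U \subset M_{\alpha}$ (shrinking if necessary), and the restriction of the NC structure on $M_{\alpha}^{\mathrm{fr}}$ transfers along this \'etale map to give an NC structure $\oO_U^{\n}$ on $U$. Taking finitely many such slices yields the required affine cover $\{U_i\}_{i\in \mathbb{I}}$.

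For the formal completion claim $\widehat{\oO}_{U_i, [F']}^{\n} \cong R_{F'}^{\n}$ at an arbitrary point $[F'] \in U_i$, the universal framed family over the slice $S_i$ realizes $S_i$ as a pro-representing hull for framed NC deformations at every point, not only at the chosen center. Because the slice is transverse to the gauge direction, this completion coincides with that of the unframed functor $\mathrm{Def}_{F'}^{\n}$, which by the minimal $A_{\infty}$ model theorem applied to $\dR \Hom(F', F')$, combined with the pro-representability results of Efimov--Lunts--Orlov, is canonically isomorphic to the algebra $R_{F'}^{\n}$ of~(\ref{intro:RFnc}).

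Finally, on an overlap $U_{ij} = U_i \cap U_j$, I construct the gluing isomorphism $\phi_{ij}$ as follows: for each point $[F'] \in U_{ij}$, its preimages in $S_i$ and $S_j$ lie in a common $\GL_N$-orbit, and any choice of a $\GL_N$-element relating them induces a local morphism between the NC structures. Patching such choices over $U_{ij}$ yields $\phi_{ij}$, and both abelianizations are canonically $\oO_{M_{\alpha}}|_{U_{ij}}$, giving $\phi_{ij}^{ab} = \id$. The main obstacle is precisely this last step: the NC structure on $M_{\alpha}^{\mathrm{fr}}$ is not expected to be strictly $\GL_N$-equivariant, so the ambiguity in choosing group elements yields $\phi_{ij}$ that depend on these choices and in general fail the cocycle condition on triple overlaps---this is exactly why only a quasi NC structure is obtained rather than a genuine NC structure, and controlling this indeterminacy while still producing honest isomorphisms on double overlaps is where the bulk of the technical work must go.
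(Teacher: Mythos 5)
Your proposal has genuine gaps at the points that carry the weight of the theorem. First, your ``main input'' --- a canonical NC structure on a framed/Quot-type moduli space pro-representing framed NC deformations --- is not something you can simply take from the abstract: it is itself Theorem~\ref{intro:main2} of the paper, and its proof (Theorem~\ref{thm:framedQ} together with Proposition~\ref{prop:framediso}) already requires the entire comparison machinery between non-commutative deformations of sheaves and of quiver representations (the derived left adjoint $\dL\wp$ of $\Gamma_{[p,q]}$ over bases in $\nN$, Proposition~\ref{prop:isom}). Moreover the paper's framed space uses a single section $s\in H^0(F(p))$, not the $\GL_N$-framing of Simpson's Quot scheme, so even quoting Theorem~\ref{intro:main2} would not give you an NC structure on $Q^{ss}$; and there is no general procedure for ``transferring'' an NC structure along a Luna slice or an \'etale map --- NC thickenings are not \'etale-local data, and restricting a thickening of $Q^{ss}$ to a locally closed slice is not canonical. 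Second, the step ``because the slice is transverse to the gauge direction, this completion coincides with that of the unframed functor $\mathrm{Def}_{F'}^{\n}$'' is exactly the delicate point in the non-commutative setting: the interaction of group actions/automorphisms with deformations over $\Lambda\in\nN^{\rm{loc}}$ is what breaks (see Remark~\ref{rmk:aut}), and relating framed to unframed NC hulls requires an actual argument (in the paper this is done via Lemma~\ref{lem:prohull}, Lemma~\ref{lem:complete} and the uniqueness of pro-representable hulls, after the functors have been identified).

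Third, and most importantly for the statement as written, the existence of the isomorphisms $\phi_{ij}$ on double overlaps with $\phi_{ij}^{ab}=\id$ is part of the conclusion, and your construction (``pick $\GL_N$-elements relating preimages pointwise and patch'') does not produce a morphism of NC schemes; you yourself defer it as ``where the bulk of the technical work must go.'' The paper obtains $\phi_{ij}$ essentially for free from a functorial characterization: each local model $U_i^{\n}$ (built as a Kapranov NC smooth thickening of an affine open in the smooth quiver moduli $M_{Q,\theta}(\gamma)$, cut down by the two-sided ideal of relations $\jJ_{I,U}$) is an NC hull of the restriction of one globally defined functor $h_{\alpha}$ (Propositions~\ref{prop:natural}, \ref{prop:natural2}, \ref{prop:isom2}), and affine NC hulls are unique up to isomorphism inducing the identity on abelianizations (Lemma~\ref{NCh}, via Lemma~\ref{lem:id}); this is what Corollary~\ref{cor:sit} packages. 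Your slices carry no such functorial characterization, so the double-overlap isomorphisms, the cocycle discussion, and the completion computation at non-central points all remain unsupported. In short, the proposal substitutes the paper's two hard ingredients --- the NC-hull formalism and the NC-base comparison $\Gamma_{[p,q]}\colon\mathfrak{M}^{\n}_{\alpha}\xrightarrow{\ \cong\ }\mathfrak{M}^{\n}_{[p,q]}$ --- with appeals to commutative GIT intuition (Luna slices, transversality to gauge orbits) that do not carry over to NC bases.
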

We call 
NC structures $\{U_i^{\n}\}_{i\in \mathbb{I}}$
satisfying the condition (\ref{intro:qnc}) as a
\textit{quasi NC structure} on $M_{\alpha}$. 
It gives a global NC structure on $M_{\alpha}$ if the isomorphisms
(\ref{intro:qnc}) satisfy the cocycle condition. 
The obstruction 
for this cocycle condition
lies in $H^2$ sheaf cohomology of $M_{\alpha}$, 
hence Theorem~\ref{intro:main}
implies Problem~\ref{intro:problem}
if $\dim M_{\alpha} \le 1$. 
The quasi NC structure (\ref{intro:qnc}) may be 
treated as a twisted sheaf, 
and the gluing problem 
of $\{U_i\}_{i\in \mathbb{I}}$
is 
something similar to the existence problem of the universal 
sheaf over $X \times M_{\alpha}$. 
In any case, the isomorphisms $\phi_{ij}$
satisfy the cocycle condition 
after taking the 
subquotients of the NC filtration, defined in Subsection~\ref{subsec:NCnil}.
Hence we obtain the commutative scheme over $M_{\alpha}$
(cf.~Remark~\ref{rmk:glue})
\begin{align}\label{intro:scheme}
\bigcup_{i \in \mathbb{I}}
\Spec \left(\mathrm{gr}_{F}^{\bullet}(\oO_{U_i}^{\n}) \right) \to M_{\alpha}
\end{align}
which is canonically attached to $M_{\alpha}$. 
The scheme (\ref{intro:scheme}) 
provides a new geometric structure of $M_{\alpha}$ which 
captures the non-commutative 
deformation theory of sheaves. 

As we will see in Remark~\ref{rmk:aut}, 
the gluing issue
in Theorem~\ref{intro:main}
is caused by 
the possible existence of automorphisms
of flat families of stable sheaves over a non-commutative base
which do not extend to automorphisms of 
further deformations. 
If we instead consider a moduli problem without any 
automorphism, then this issue is fixed.  
One of the classical ways to kill automorphisms
of sheaves
is to add data of \textit{framing}. 
For an integer $p$, we
consider the moduli space 
$M_{\alpha, p}^{\star}$
of 
pairs $(F, s)$, where 
$F$ is a semistable sheaf with Hilbert polynomial $\alpha$, 
$s$ is an element (called framing)
of $H^0(F(p))$, satisfying some stability condition. 
We have the following result: 
\begin{thm}\emph{(Theorem~\ref{thm:NCframe})}\label{intro:main2}
For $p\gg 0$, the 
framed moduli scheme $M_{\alpha, p}^{\star}$
has a canonical NC structure.  
\end{thm}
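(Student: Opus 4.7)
The plan is to apply the construction behind Theorem~\ref{intro:main} to the framed setting, where the framing rigidification eliminates the automorphism ambiguity that obstructs the cocycle condition in the unframed case. As emphasized in Remark~\ref{rmk:aut}, the only obstacle to promoting a quasi NC structure to an honest NC structure is the existence of non-trivial automorphisms of flat families over NC bases that do not extend to further NC deformations, and the role of the framing is precisely to kill such automorphisms.

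First, I would formulate the NC deformation functor $\mathrm{Def}^{\n}_{(F, s)} \colon \nN^{\rm{loc}} \to \sS et$ for a framed stable pair $(F, s)$, sending $\Lambda$ to isomorphism classes of pairs $(\widetilde{F}, \widetilde{s})$ with $\widetilde{F}$ a flat deformation of $F$ to $\oO_X \otimes_{\mathbb{C}} \Lambda$ and $\widetilde{s} \in H^0(\widetilde{F}(p))$ lifting $s$. Since $F$ is stable, hence simple, the only scalar preserving $s$ is $1$, so $\Aut(F, s) = \{1\}$, and the same rigidity propagates to all infinitesimal framed NC deformations. It follows that $\mathrm{Def}^{\n}_{(F, s)}$ is not merely equipped with a pro-representable hull but is actually pro-represented by a canonical NC algebra $R^{\n}_{(F, s)}$, whose abelianization computes the completion of $\oO_{M_{\alpha, p}^{\star}}$ at $[(F, s)]$.

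Next, I would run the local construction of Theorem~\ref{intro:main} on the framed moduli space: choose an affine open cover $\{U_i^{\star}\}_{i \in \mathbb{I}}$ of $M_{\alpha, p}^{\star}$ and construct NC structures $U_i^{\star, \n}$ on each together with isomorphisms $\phi_{ij}^{\star}$ on overlaps whose abelianizations are the identity. The crucial difference from the unframed case is that each $\phi_{ij}^{\star}$ is now \emph{uniquely} determined: on completion at any closed point $[(F, s)] \in U_i^{\star} \cap U_j^{\star}$, both sides pro-represent the same functor $\mathrm{Def}^{\n}_{(F, s)}$, so $\phi_{ij}^{\star}$ is forced to be the unique isomorphism inducing the identity on abelianizations. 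Consequently, the triple composition $\phi_{ik}^{\star, -1} \circ \phi_{jk}^{\star} \circ \phi_{ij}^{\star}$ is an automorphism of $U_i^{\star, \n}|_{U_i^{\star} \cap U_j^{\star} \cap U_k^{\star}}$ that reduces to the identity on every formal NC completion.

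The main obstacle, and the real technical content of the argument, is to upgrade this pointwise triviality to the identity of the cocycle on the whole NC scheme: an automorphism of an NC structure sheaf which is trivial modulo the NC filtration and which is the identity on the completion at every closed point must itself be the identity. The argument combines an exponential description of such automorphisms in terms of NC derivations along the filtration with a density statement showing that a derivation of $\oO_{U_i^{\star}}^{\n}$ whose formal germ vanishes at every closed point of the underlying commutative scheme must vanish identically. Once this is established, the cocycle condition holds on the nose, the local charts $U_i^{\star, \n}$ glue to a global NC structure on $M_{\alpha, p}^{\star}$, and canonicity is inherited from the pro-representability established in the first step.
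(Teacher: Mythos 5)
Your identification of the key mechanism is correct: the framing kills automorphisms of flat families over NC bases (this is Lemma~\ref{lem:aut} in the paper, proved by induction on the NC nilpotence degree using the description of automorphisms over central extensions in Lemma~\ref{lem:I}~(ii)), and this rigidity is what upgrades the quasi NC structure to an honest NC structure. However, your route for extracting the cocycle condition from this rigidity diverges from the paper's and leaves its hardest step unproved. You establish uniqueness of the gluing isomorphisms only at the level of formal completions at closed points, and then must globalize via the claim that an automorphism of $\oO_{U}^{\n}$ inducing the identity on the abelianization and on every formal NC completion is itself the identity. You propose an ``exponential'' description of such automorphisms by NC derivations together with a density statement, but neither is carried out, and you yourself flag this as the real technical content of the argument. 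As it stands this is a genuine gap: the passage from pointwise formal triviality to global triviality of the cocycle is exactly what needs proof.

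The paper avoids this globalization problem entirely. It works with the moduli functor $h^{\star}$ on the whole category $\nN$ of NC nilpotent algebras, not merely the local finite-dimensional ones, and uses Lemma~\ref{lem:aut} to show that the natural transform $h_{U^{\n}} \to h^{\star}|_{U}$ is an \emph{isomorphism} of functors on $\nN$: in the patching step for the fiber product $\Lambda_{12}=\Lambda_1\times_{\Lambda}\Lambda_2$ (the map (\ref{map:star})), the isomorphism over $\Lambda$ used to glue two deformations is now unique, so the lift to $\Lambda_{12}$ is well defined and the map is bijective rather than merely surjective. Once each chart honestly represents $h^{\star}|_{U}$, Remark~\ref{rmk:NC} gives the canonical gluing by Yoneda, with no completion or density argument. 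Note also that the paper's actual proof of Theorem~\ref{thm:NCframe} routes through the framed quiver moduli space (Theorem~\ref{thm:framedQ}) and the comparison functors $\Gamma_{[p,q]}$ and $\wp$ (Lemmas~\ref{lem:fstable}, \ref{lem:fisom} and Proposition~\ref{prop:framediso}); your proposal treats the construction of the local NC charts on the sheaf side as a black box, which obscures where the hypothesis $p\gg 0$ enters.
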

Similarly to Theorem~\ref{intro:main}, 
the NC structure 
in Theorem~\ref{intro:main2}
locally represents the 
non-commutative deformation functor of 
framed sheaves. 
As a corollary of Theorem~\ref{intro:main2}, 
we have the canonical NC structure 
on the Hilbert scheme $\Hilb^n(X)$
of $n$-points on $X$ (cf.~Subsection~\ref{subsec:hilb}). 

\subsection{Idea of the proof}
As described in~\cite{Hu}, 
the classical way to construct 
the moduli space $M_{\alpha}$ is to take 
the GIT quotient of the
Grothendieck Quot scheme. 
Rather recently, 
\'Alvarez-C\'onsul-King~\cite{ACK}
gave another construction of $M_{\alpha}$ using 
the moduli space of representations of 
a quiver. 
The argument of~\cite{ACK} 
was improved by
Behrend-Fontanine-Hwang-Rose~\cite{BFHR}
to construct 
the derived moduli scheme of stable sheaves. 
Our strategy 
is to construct a quasi NC structure on 
the moduli space of representations of a quiver, 
and pull it back to $M_{\alpha}$
by extending the arguments of~\cite{BFHR}
to non-commutative thickenings. 
 
The basic idea is 
as follows: we 
associate a stable sheaf $[F] \in M_{\alpha}$ 
with the vector space
\begin{align}\label{intro:Gpq}
\Gamma_{[p, q]}(F) 
\cneq \bigoplus_{i=p}^{q} \Gamma(X, F(i)). 
\end{align}
The vector space $\Gamma_{[p, q]}(F)$ is a representation of 
a certain quiver $Q_{[p, q]}$ with relation $I$,  
defined by the graded algebra structure 
of the homogeneous
coordinate ring of $X$
(cf.~Figure~\ref{fig:quiver}). 
For $q \gg p \gg 0$, 
the result of~\cite{BFHR} shows 
the map $F \mapsto \Gamma_{[p, q]}(F)$
gives an open immersion
\begin{align*}
\Gamma_{[p, q]} \colon M_{\alpha} \subset M_{Q, I}
\end{align*} 
where 
$M_{Q, I}$
is the 
moduli space of
representations of $(Q_{[p, q]}, I)$. 
We then construct 
a quasi NC structure on $M_{Q, I}$
in the following way.  
We first embed $M_{Q, I}$ into 
the smooth 
scheme
\begin{align*}
M_{Q, I} \subset M_{Q}
\end{align*}
where $M_{Q}$
is the moduli space of 
representations of $Q$ without relation. 
Let $U \subset M_{Q}$ be an 
affine open subset. 
By Kapranov~\cite{Kap17}, the smooth affine scheme $U$
 has an NC smooth thickening
$(U, \oO_{U}^{\n})$, 
which is unique up to non-canonical 
isomorphisms. 
We construct the 
NC structure
 on $V=U \cap M_{Q, I}$
by taking the 
quotient algebra $\oO_{U}^{\n}/\jJ_{I, U}$
to be its NC structure sheaf, 
where $\jJ_{I, U} \subset \oO_{U}^{\n}$ is the two 
sided ideal 
determined by the relations in $I$. 
We prove that the above local NC structures
\begin{align*}
V^{\n}=(V, \oO_{U}^{\n}/\jJ_{I, U})
\end{align*} 
on $M_{Q, I}$
are pulled back to $M_{\alpha}$ to give 
a desired quasi NC structure. 
The key technical 
ingredient is 
to construct the derived 
left adjoint functor of (\ref{intro:Gpq})
for flat families of $(Q_{[p, q]}, I)$-representations 
over non-commutative bases, which enables us to compare 
non-commutative deformations of sheaves with 
those of representations of $(Q_{[p, q]}, I)$. 

\subsection{Possible applications}
Let $X \to Y$ be 
a flopping contraction 
from a smooth 3-fold $X$, 
whose exceptional locus is 
a smooth rational curve $C$. 
In the paper~\cite{WM}, Donovan-Wemyss 
used
the algebra $R_{\oO_C}^{\n}$
to construct a non-commutative twist functor 
of $D^b \Coh(X)$, which describes 
Bridgeland-Chen's flop-flop autoequivalence~\cite{Br1}, \cite{Ch}. 
In this situation, the underlying commutative 
moduli space of $\oO_C$ is topologically one point, 
and the formal non-commutative 
moduli space 
$\Spf R_{\oO_C}^{\n}$ also 
gives the global one.  
On the other hand, one may try to construct a similar 
autoequivalence associated to a divisorial contraction 
$X \to Y$ 
for a Calabi-Yau 3-fold $X$, 
contracting a divisor $E \subset X$ to a curve $Z \subset Y$.  
Since $\dim Z =1$, 
Theorem~\ref{intro:main}
should yield a NC structure $Z^{\n}$ on $Z$
induced by non-commutative deformation theory of 
fibers of $E \to Z$. 
Then similarly to the work~\cite{WM}, 
one may able to construct 
an autoequivalence of $D^b \Coh(X)$
generalizing EZ-spherical twist~\cite{Ho}, 
using the NC structure $Z^{\n}$ on $Z$.  
When $Y$ is a spectrum of a complete local 
$\mathbb{C}$-algebra, 
such an autoequivalence was 
constructed by Wemyss~\cite[Section~4.4]{Wemy}, and 
our construction of $Z^{\n}$ should 
give a globalization of his result. 

In the above flopping
contraction case, the algebra 
$R_{\oO_C}^{\n}$ is finite dimensional. 
In the paper~\cite{Todw},
the author described the dimension of $R_{\oO_C}^{\n}$
in terms of Katz's genus zero Gopakumar-Vafa invariants 
on $X$.  
This phenomena suggests
that there might be a 
DT type theory which captures non-commutative deformations of sheaves
on Calabi-Yau 3-folds, and 
has some relations to the usual DT theory. 
The result of Theorem~\ref{intro:main} 
may lead to a foundation of such a theory. 
Indeed when 
there are no higher obstruction spaces, 
the construction of quasi NC structures in
Theorem~\ref{intro:main}
yields interesting virtual fundamental sheaves 
in $K_0(M_{\alpha})$ which captures 
non-commutative deformations of sheaves. 
The details of their constructions
and the properties will pursued in 
the next paper~\cite{Todnc2}.   

\subsection{Plan of the paper}
In Section~\ref{sec:NC}, 
we review Kapranov's NC schemes, 
introduce quasi NC structures, 
discuss the related notions. 
In Section~\ref{sec:NC2}, 
we construct quasi NC structures 
on the moduli spaces of representations of quivers
with relations. 
In Section~\ref{sec:NC3}, 
we 
construct quasi NC structures on the 
moduli spaces of stable sheaves
using those on representations of quivers
with relations. 
In Section~\ref{sec:NC4}, 
we describe some examples. 
\subsection{Acknowledgement}
The author would like to thank 
Tomoyuki Abe, Will Donovan, Zheng Hua 
and Michael Wemyss
for the discussions related to this paper. 
This work is supported by World Premier 
International Research Center Initiative
(WPI initiative), MEXT, Japan, 
and Grant-in Aid
for Scientific Research grant (No.~26287002)
from the Ministry of Education, Culture,
Sports, Science and Technology, Japan.

\subsection{Notation and convention}
In this paper, all the varieties
or schemes
are defined over $\mathbb{C}$. 
The category $\cC om$ is 
the category of commutative $\mathbb{C}$-algebras. 
An algebra always means an associative, 
not necessary commutative,  
$\mathbb{C}$-algebra. 
For an algebra $\Lambda$, the 
category $\Lambda \modu$ is the category 
of finitely generated left $\Lambda$-modules. 
We denote by $\Lambda^{ab}$ the abelization of $\Lambda$, 
and for $N \in \Lambda \modu$
we denote $N^{ab} \cneq \Lambda^{ab} \otimes_{\Lambda} N$. 
Also for left $\Lambda$-module homomorphism
$\phi \colon N_1 \to N_2$, we 
set $\phi^{ab}=\Lambda^{ab} 
\otimes_{\Lambda} \phi \colon N_1^{ab} \to N_2^{ab}$.

\section{NC structures and quasi NC structures}\label{sec:NC}
In this section, 
we recall the definition of NC schemes 
introduced by Kapranov~\cite{Kap17}.
We also introduce quasi NC structures, NC hulls 
and prove a way to construct them via functors.  
\subsection{NC nilpotent algebras}\label{subsec:NCnil}
Let $\Lambda$ be an algebra. 
We regard $\Lambda$ as a Lie algebra 
by setting $[a, b]=ab-ba$. 
The subspace
 $\Lambda_{k}^{\rm{Lie}} \subset \Lambda$
is defined to be spanned by the elements of the form
\begin{align*}
[x_1, [x_2, \cdots, [x_{k-1}, x_k]\cdots ]]
\end{align*}
for $x_i \in \Lambda$, $1\le i\le k$. 
The \textit{NC filtration} of $\Lambda$ is the decreasing 
filtration
\begin{align*}
\Lambda=F^0\Lambda \supset F^1 \Lambda \supset \cdots
\supset F^{d} \Lambda \supset \cdots
\end{align*}
where $F^d \Lambda$ is the two-sided ideal of $\Lambda$
defined by
\begin{align*}
F^d \Lambda \cneq 
\sum_{m\ge 0}\sum_{i_1+\cdots+i_m=m+d}\Lambda \cdot 
\Lambda_{i_1}^{\rm{Lie}} \cdot \Lambda \cdot 
\cdots \cdot \Lambda_{i_m}^{\rm{Lie}} \cdot \Lambda. 
\end{align*}
Note that
$\Lambda/F^1 \Lambda$ is the abelization 
$\Lambda^{ab}$ of $\Lambda$. 
We set $\Lambda^{\le d} \cneq \Lambda/F^{d+1} \Lambda$, 
and denote $N^{\le d} \cneq \Lambda^{\le d} \otimes_{\Lambda}N$
for $N \in \Lambda \modu$. 
\begin{defi}
(i) 
An algebra $\Lambda$
is called NC nilpotent of degree $d$
(resp.~NC nilpotent)
if $F^{d+1}\Lambda=0$
(resp.~$F^n \Lambda=0$ for $n\gg 0$).  

(ii) The NC completion of an algebra $\Lambda$ is 
\begin{align*}
\Lambda_{[[ab]]} \cneq
\lim_{\longleftarrow}\Lambda^{\le d}. 
\end{align*}

(iii) An algebra $\Lambda$ 
is called NC complete if the natural map 
$\Lambda \to \Lambda_{[[ab]]}$ is an isomorphism. 
\end{defi}

For an algebra $\Lambda$, its subquotient
\begin{align*}
\gr_{F}^{\bullet}(\Lambda) \cneq 
\bigoplus_{d\ge 0} F^d \Lambda/F^{d+1} \Lambda
\end{align*}
is a commutative algebra. 
\begin{lem}\label{lem:id}
For an algebra $\Lambda$ and 
an algebra homomorphism $\phi \colon \Lambda \to \Lambda$, 
suppose that the induced homomorphism 
$\phi^{ab} \colon \Lambda^{ab} \to \Lambda^{ab}$ is identity. 
Then $\gr_F^{\bullet}(\phi)=\id$, and if 
$\Lambda$ is NC complete, 
$\phi$ is an isomorphism. 
\end{lem}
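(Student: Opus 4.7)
My plan is a two-step induction along the NC filtration, with both steps ultimately resting on the commutativity of $\gr_F^{\bullet}(\Lambda)$ recorded just above the lemma. As a preliminary observation, note that $\phi$ preserves the filtration: since $\phi$ is an algebra homomorphism it commutes with the formation of iterated commutators, so it sends each $\Lambda_k^{\mathrm{Lie}}$ to itself and therefore each two-sided ideal $F^d\Lambda$ to itself.

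For the first assertion, set $\psi \cneq \phi - \id$ and induct on $d$ to show $\psi(F^d\Lambda) \subset F^{d+1}\Lambda$. The base case $d = 0$ is exactly the hypothesis $\phi^{ab} = \id$. For the inductive step, expand a general element of $F^d\Lambda$ as a $\mathbb{C}$-linear combination of monomials $a_0 c_1 a_1 \cdots c_m a_m$ with $c_i \in \Lambda_{p_i}^{\mathrm{Lie}}$ and $\sum_{i}(p_i - 1) = d$, and apply the Leibniz-type identities
\begin{align*}
\psi(ab) = \psi(a)\phi(b) + a\psi(b), \qquad \psi([x,y]) = [\phi(x), \psi(y)] + [\psi(x), y].
\end{align*}
Combining the obvious $F^a\Lambda \cdot F^b\Lambda \subset F^{a+b}\Lambda$ with the key fact $[F^a\Lambda, F^b\Lambda] \subset F^{a+b+1}\Lambda$ (equivalent to the stated commutativity of $\gr_F^{\bullet}(\Lambda)$) together with the inductive hypothesis, every term in the resulting expansion lands in $F^{d+1}\Lambda$, giving the inductive step.

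For the second assertion, assume $\Lambda$ is NC complete and induct on $d$ to show that the induced map $\phi^{\le d} \colon \Lambda^{\le d} \to \Lambda^{\le d}$ is an isomorphism. The base $d = 0$ is trivial since $\phi^{\le 0} = \phi^{ab} = \id$. For the step, $\phi^{\le d}$ preserves the short exact sequence
\begin{align*}
0 \to F^d\Lambda/F^{d+1}\Lambda \to \Lambda^{\le d} \to \Lambda^{\le d-1} \to 0,
\end{align*}
acts as the identity on the left term by the first assertion, and as an isomorphism on the right term by induction, so the five lemma forces $\phi^{\le d}$ to be an isomorphism. Since $\Lambda$ is NC complete, $\phi$ is identified with $\varprojlim_d \phi^{\le d}$, which is an isomorphism.

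The main bookkeeping hurdle is the inductive step of the first claim, namely verifying that $\psi$ genuinely raises the filtration degree by one on a generic product of iterated Lie brackets; once the graded commutativity of $\gr_F^{\bullet}(\Lambda)$ is invoked to absorb the unwanted commutator cross terms, everything is formal.
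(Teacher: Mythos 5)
Your proof is correct and follows essentially the same route as the paper: show $\phi-\id$ raises the NC filtration degree by one (the paper asserts this in a single ``Therefore,'' which your Leibniz-rule induction and the inclusion $[F^a\Lambda,F^b\Lambda]\subset F^{a+b+1}\Lambda$ justify in detail), then deduce that each $\phi^{\le d}$ is an isomorphism by induction on $d$ and pass to the inverse limit using NC completeness. The only cosmetic remark is that the inductive step of your first claim is most cleanly organized as a sub-induction on the bracket length $k$ showing $\psi(\Lambda_k^{\mathrm{Lie}})\subset F^k\Lambda$, rather than on $d$ itself, but the computation you describe is exactly this and it goes through.
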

\begin{proof}
The assumption $\phi^{ab}=\id$ implies that 
$\phi(x)-x \in F^1 \Lambda$
for any $x \in \Lambda$. Therefore 
$\phi(x)-x \in F^{d+1} \Lambda$ if $x \in F^d \Lambda$, 
hence $\gr_F^{\bullet}(\phi)=\id$. 
The fact $\gr_F^{\bullet}(\phi)=\id$ together with 
the induction on $d$ show that 
$\phi^{\le d} \colon \Lambda^{\le d} \to \Lambda^{\le d}$
is an isomorphism
for any $d>0$. 
Therefore $\phi$ is an isomorphism if $\Lambda$ is NC complete. 
\end{proof}

\subsection{NC schemes}
Let $\Lambda$ be an NC complete algebra. 
The \textit{affine NC scheme} 
\begin{align}\label{aNC}
\Spf \Lambda
=(\Spec \Lambda^{ab}, \oO_Y)
\end{align}
is a ringed space
defined in the following way. 
For any multiplicative set 
$S \subset \Lambda^{ab}$, 
its pull-back
by the natural surjection $\Lambda^{\le d} \twoheadrightarrow
 \Lambda^{ab}$
determines the multiplicative set 
in $\Lambda^{\le d}$, which 
satisfies the Ore localization condition (cf.~\cite[Proposition~2.1.5]{Kap17}). Therefore, similarly to the case of usual affine 
schemes, the NC nilpotent algebra
$\Lambda^{\le d}$ determines the sheaf of 
algebras $\oO_{Y}^{\le d}$
on $\Spec \Lambda^{ab}$. 
The sheaf of algebras $\oO_Y$ is defined by 
\begin{align*}
\oO_Y \cneq \lim_{\longleftarrow} \oO_{Y}^{\le d}. 
\end{align*}
\begin{defi}\emph{(\cite[Definition~2.2.5]{Kap17})}
A ringed space $Y$ is called 
an NC scheme if it is locally isomorphic to
an 
affine NC scheme of the form (\ref{aNC}). 
\end{defi}
The category of NC schemes is the full subcategory of 
ringed spaces consisting of NC schemes. 
For an NC scheme $Y$, the category $\Coh(Y)$ 
is defined to be the 
category of coherent left $\oO_Y$-modules on $Y$. 
Note that the structure sheaf $\oO_Y$ has a filtration 
by sheaves of two-sided ideals $F^n \oO_Y \subset \oO_Y$. 
The quotient $\oO_Y^{\le d} \cneq \oO_Y/F^{d+1}\oO_Y$ defines 
the NC subscheme $Y^{\le d} \subset Y$. 
In particular, $Y^{ab} \cneq Y^{\le 0}$ is 
a usual scheme. 
\begin{defi}
An NC structure on a commutative scheme $M$
is an NC scheme
\begin{align}\label{NCstr}
M^{\n}=(M, \oO_M^{\n})
\end{align}
such that $(\oO_M^{\n})^{ab}=\oO_M$. 
\end{defi}
Here in the RHS of (\ref{NCstr}), 
$M$ is just regarded as a topological space. 
In general, it is not easy 
to construct 
non-trivial NC structures on a given algebraic variety. 
Instead, we introduce the following weaker notion of 
NC structures:
\begin{defi}\label{def:QNC}
Let $M$ be a commutative scheme. 
A quasi NC structure on $M$
consists of an affine 
open cover $\{U_i\}_{i\in \mathbb{I}}$
of $M$, 
affine NC structures
$(U_{i}, \oO_{U_i}^{\rm{nc}})$
on $U_i$
for each $i\in \mathbb{I}$, 
and isomorphisms
\begin{align}\label{phiij}
\phi_{ij} \colon 
(U_{ij}, \oO_{U_j}^{\rm{nc}}|_{U_{ij}})
\stackrel{\cong}{\to}
(U_{ij}, \oO_{U_i}^{\rm{nc}}|_{U_{ij}}). 
\end{align}
satisfying $\phi_{ij}^{ab}=\id$. 
\end{defi}
\begin{rmk}
A quasi NC structure gives rise to the NC structure 
if and only if the isomorphisms $\phi_{ij}$ in (\ref{phiij})
satisfy the cocycle condition, i.e. 
\begin{align*}
\phi_{ij} \circ \phi_{jk} \circ \phi_{ki}=\id.
\end{align*}
\end{rmk}
\begin{rmk}\label{rmk:glue}
Although (\ref{phiij})
may not satisfy the cocycle condition, 
Lemma~\ref{lem:id} 
implies that $\gr_F^{\bullet}(\phi_{ij})$ 
always satisfies the 
cocycle condition.
Hence $\gr_F^{\bullet}(\oO_{U_i}^{\n})$ glue together
to give a sheaf of algebras on $M$.
In particular, we have the
commutative scheme over $M$
\begin{align*}
\bigcup_{i \in \mathbb{I}}
\Spec \left(\gr_F^{\bullet}(\oO_{U_i}^{\n})\right) \to M. 
\end{align*}
\end{rmk}
\subsection{Smooth NC schemes}
Let $\nN_d$ be the category of 
NC nilpotent algebras of degree $d$, and 
$\nN$ the category of NC nilpotent algebras. 
We have the following inclusions
\begin{align}\label{Com}
\cC om \cneq \nN_0 \subset \nN_1 \subset \cdots \subset \nN_d \subset \cdots 
\subset \nN. 
\end{align} 
Let $\Lambda' \twoheadrightarrow \Lambda$
be a surjection in $\nN$, and take the 
exact sequence
\begin{align}\label{central}
0 \to J \to \Lambda' \to \Lambda \to 0.
\end{align}
The sequence (\ref{central}) 
is called 
a \textit{central extension} if $J^2=0$ and 
$J$ lies in the center of $\Lambda'$. 
In particular, $J$ is a $\Lambda^{ab}$-module. 
\begin{defi}\label{defi:smooth}
For functors  
$h_1, h_2 \colon \nN \to \sS et$, 
a natural transform
$\phi \colon h_1 \to h_2$
is called 
formally smooth if 
for any
central extension (\ref{central})
in $\nN$, 
the map 
\begin{align*}
h_1(\Lambda') \to h_2(\Lambda') \times_{h_2(\Lambda)}
h_1(\Lambda)
\end{align*}
is surjective. 
\end{defi}
An NC scheme $Y$ defines a covariant functor
\begin{align*}
h_{Y} \colon 
\nN \to \sS et
\end{align*}
sending $\Lambda$ to 
$\Hom(\Spf \Lambda, Y)$. 
In the situation of Definition~\ref{defi:smooth},
a functor $h \colon \nN \to \sS et$ is 
called \textit{formally smooth} if the 
natural transform 
$h \to h_{\Spec \mathbb{C}}$ is 
formally smooth, i.e. 
for any central extension (\ref{central}), the 
map $h(\Lambda') \to h(\Lambda)$ is surjective.  
If the same condition holds 
only for
 central extensions (\ref{central}) 
in $\nN_d$,
we call
the functor $h|_{\nN_d}$
as \textit{formally} $d$-\textit{smooth}. 
\begin{defi}
(i) 
An NC scheme $Y$ is called smooth 
if $h_{Y}$ is formally smooth. 

(ii) An NC scheme $Y$
is called $d$-smooth 
if 
$F^{d+1}\oO_Y=0$ and 
$h_{Y}|_{\nN_d}$ is formally 
$d$-smooth. 
\end{defi}
It is easy to see that if an NC scheme $Y$
is smooth, then 
$Y^{\le d}$
is $d$-smooth. 
In particular, 
$Y_{ab}$ is a smooth scheme
in the usual sense. 
If $Y$ is NC smooth, we say that 
$Y$ is an \textit{NC smooth thickening} of $Y_{ab}$. 
By~\cite[Theorem~1.6.1]{Kap17}, 
any smooth affine scheme has 
an NC smooth thickening, which 
is unique up to non-canonical isomorphisms. 
In particular, any smooth 
algebraic variety has 
a smooth quasi NC structure 
in the sense of Definition~\ref{def:QNC}. 
 
\subsection{(Quasi) NC structures via functors}
We introduce the notion of NC hull
to construct quasi NC structures. 
\begin{defi}
Let $h \colon \nN \to \sS et$ be a functor. 
An NC scheme $Y$ together with
a natural transform $\phi \colon h_Y \to h$ is called 
 an NC hull of $h$ if 
$\phi$ is formally smooth and isomorphism on $\cC om$. 
If $Y$ is an affine NC scheme, we call it an affine NC hull. 
\end{defi}
We have the following lemma:
\begin{lem}\label{NCh}
An affine NC hull is unique up to non-canonical 
isomorphisms. 
\end{lem}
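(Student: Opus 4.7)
The plan is to use the classical ``back and forth'' lifting argument for pro-representable hulls, adapted to the NC setting. Suppose $Y_i = \Spf \Lambda_i$ ($i=1,2$) are two affine NC hulls of $h$, equipped with formally smooth natural transforms $\phi_i \colon h_{Y_i} \to h$ that are isomorphisms on $\cC om$. First I would observe that $\phi_i|_{\cC om}$ being an isomorphism implies that $h|_{\cC om}$ is represented by $Y_i^{ab} = \Spec \Lambda_i^{ab}$, yielding a canonical isomorphism $\psi^{(0)} \colon \Lambda_2^{ab} \stackrel{\sim}{\to} \Lambda_1^{ab}$ compatible with $\phi_1, \phi_2$ on commutative test rings.

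Next I would build, by induction on $d$, a compatible family of algebra homomorphisms $\psi^{(d)} \colon \Lambda_2^{\le d} \to \Lambda_1^{\le d}$ extending $\psi^{(0)}$. The kernel $J = F^{d+1}\Lambda_1/F^{d+2}\Lambda_1$ of $\Lambda_1^{\le d+1} \twoheadrightarrow \Lambda_1^{\le d}$ satisfies $J^2 \subset F^{2d+2}\Lambda_1/F^{d+2}\Lambda_1 = 0$ and, possibly after decomposing into finitely many substeps, is central in $\Lambda_1^{\le d+1}$, so this surjection presents as a central extension in the sense of \eqref{central}. The inductive datum consists of $\psi^{(d)}$, viewed as an element $u_d \in h_{Y_2}(\Lambda_1^{\le d})$, together with the element $v_{d+1} = \phi_1(\id_{Y_1^{\le d+1}}) \in h(\Lambda_1^{\le d+1})$; these match in $h(\Lambda_1^{\le d})$ by the inductive compatibility $\phi_2(u_d) = \phi_1(\id_{\Lambda_1^{\le d}})$. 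Formal smoothness of $\phi_2$ then lifts the pair $(u_d, v_{d+1})$ to an element of $h_{Y_2}(\Lambda_1^{\le d+1})$, i.e., to the desired $\psi^{(d+1)}$.

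Since $\Lambda_1$ and $\Lambda_2$ are NC complete and any algebra map $\Lambda_2 \to \Lambda_1^{\le d}$ factors through $\Lambda_2^{\le d}$, the inverse system $\{\psi^{(d)}\}$ assembles into an algebra map $\psi \colon \Lambda_2 \to \Lambda_1$, hence into an NC scheme morphism $f \colon Y_1 \to Y_2$ with $\phi_2 \circ h_f = \phi_1$. Reversing the roles of $Y_1$ and $Y_2$ produces a morphism $g \colon Y_2 \to Y_1$. By construction both $g \circ f$ and $f \circ g$ induce the identity on abelizations, so Lemma \ref{lem:id} upgrades them to isomorphisms, whence $f$ is the required (non-canonical) isomorphism.

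The main technical obstacle will lie in the inductive lifting step: one must verify that $\Lambda_1^{\le d+1} \twoheadrightarrow \Lambda_1^{\le d}$ decomposes as a sequence of central extensions so that Definition \ref{defi:smooth} applies, and must propagate the naturality of $\phi_1, \phi_2$ so that $u_d$ and $v_{d+1}$ remain compatible at every stage of the filtration. The non-canonicity of the resulting isomorphism is unavoidable because at each inductive step the lift in the fiber product is not unique, which is exactly the standard failure of canonicity encountered in Schlessinger-type hull theorems.
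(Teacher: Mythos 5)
Your proposal is correct and is essentially the paper's argument: the paper packages your inductive lifting along the tower $\Lambda^{\le d+1}\twoheadrightarrow\Lambda^{\le d}$ as surjectivity of $\varprojlim h_{Y_i}(\Lambda^{\le d})\to\varprojlim h(\Lambda^{\le d})$ for NC complete $\Lambda$, identifies the left side with $\Hom(\Lambda_i,\Lambda)$ to get the back-and-forth maps $u,v$, and then concludes with Lemma~\ref{lem:id} exactly as you do. (Your hedge about decomposing into substeps is not needed: since $[\Lambda, F^{d+1}\Lambda]\subset F^{d+2}\Lambda$ and $(F^{d+1}\Lambda)^2\subset F^{d+2}\Lambda$, each surjection $\Lambda^{\le d+1}\twoheadrightarrow\Lambda^{\le d}$ is already a central extension.)
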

\begin{proof}
For $i=1, 2$, let 
$\phi_i \colon h_{Y_i} \to h$ are affine NC hulls, 
and write $Y_i=\Spf \Lambda_i$. 
Note that, by the formal smoothness of NC hulls, 
the natural map 
\begin{align}\label{limh}
\lim_{\longleftarrow}
h_{Y_i}(\Lambda^{\le d}) \to \lim_{\longleftarrow} h(\Lambda^{\le d})
\end{align}
is surjective for any NC complete algebra $\Lambda$. 
Since the LHS of (\ref{limh})
coincides with $\Hom(\Lambda_i, \Lambda)$, 
there exist algebra homomorphisms 
$u \colon \Lambda_1 \to \Lambda_2$, 
$v \colon \Lambda_2 \to \Lambda_1$ 
which commute with $\phi_1$, $\phi_2$. 
Then the compositions 
\begin{align}\label{uv}
v \circ u \colon \Lambda_1 \to \Lambda_1, \ 
u \circ v \colon \Lambda_2 \to \Lambda_2
\end{align}
satisfy $(v\circ u)^{ab}=\id$, 
$(u \circ v)^{ab}=\id$. 
Therefore Lemma~\ref{lem:id}
implies that (\ref{uv}) are algebra isomorphisms, 
hence $u$, $v$ are isomorphisms. 
\end{proof}
Let $M$ be a commutative scheme, 
and $h \colon \nN \to \sS et$ a functor. 
If $h|_{\cC om}=h_M$, 
we have the natural 
transform 
$h \to h_{M}$
applying $h$ to $\Lambda \to \Lambda^{ab}$ 
for $\Lambda \in \nN$. 
For a subscheme $U \subset M$, 
we define 
\begin{align*}
h|_{U} \cneq h \times_{h_M} h_U. 
\end{align*}
The following corollary obviously follows 
from Lemma~\ref{NCh}:
\begin{cor}\label{cor:sit}
In the above situation, 
suppose that 
$M$ has an affine open cover 
$\cup_{i\in \mathbb{I}}U_i$, 
affine NC structures $U_i^{\n}$
on each $U_i$, 
and NC hulls 
$h_{U_i^{\n}} \to h|_{U_i}$. 
Then $\{U_i^{\n}\}_{i\in \mathbb{I}}$ 
gives a quasi NC structure on $M$. 
\end{cor}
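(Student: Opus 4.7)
The plan is to invoke Lemma \ref{NCh} on each overlap $U_{ij}$ to produce the required isomorphism $\phi_{ij}$ of Definition \ref{def:QNC}. Since each $U_i$ is affine and $U_i^{\n} = \Spf \Lambda_i$ is an affine NC scheme, for any affine open $V \subset U_i \cap U_j$ of $M$ the restrictions $U_i^{\n}|_V$ and $U_j^{\n}|_V$ are themselves affine NC schemes, obtained by Ore-localizing $\Lambda_i$ and $\Lambda_j$ at the multiplicative systems cutting out $V$. Restricting the given NC hull maps $h_{U_i^{\n}} \to h|_{U_i}$ to $V$ produces natural transforms $h_{U_i^{\n}|_V} \to h|_V$ that remain formally smooth (formal smoothness is stable under base change along $h|_V \hookrightarrow h|_{U_i}$) and remain isomorphisms on $\cC om$, using that $(h|_{U_i})|_V = h|_V$. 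Hence both $U_i^{\n}|_V$ and $U_j^{\n}|_V$ are affine NC hulls of $h|_V$, and Lemma \ref{NCh} yields an isomorphism $\psi_V \colon \oO_{U_j}^{\n}(V) \xrightarrow{\sim} \oO_{U_i}^{\n}(V)$ commuting with the hull maps. Compatibility with the hull maps in turn forces $\psi_V^{ab}$ to be the identity of $\oO_M(V)$, because both restricted hull maps induce the identity on $\cC om$ by the assumption $h|_{\cC om} = h_M$.

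It remains to globalize the local $\psi_V$ to an isomorphism $\phi_{ij}$ of sheaves of NC algebras on all of $U_{ij}$. When $M$ is separated, as is the case in the principal application to $M_\alpha$, the overlap $U_{ij}$ is itself affine and Lemma \ref{NCh} applies to it in one stroke, yielding $\phi_{ij}$ without any gluing. This is where the main subtlety sits in the general case: the isomorphisms $\psi_V$ produced by Lemma \ref{NCh} are only unique up to (possibly nontrivial) automorphism of the NC hull of $h|_V$, and so need not agree on pairwise intersections of affine pieces of $U_{ij}$. In the general setting one therefore refines $\{U_i\}_{i \in \mathbb{I}}$ to a cover by affines whose pairwise intersections are affine and applies the preceding argument on this refined cover; since Definition \ref{def:QNC} imposes no cocycle condition on the $\phi_{ij}$, no higher coherence between the local $\psi_V$ needs to be arranged.
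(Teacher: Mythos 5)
Your argument is essentially the paper's: the paper deduces the corollary directly from Lemma~\ref{NCh} by regarding both $U_i^{\n}|_{U_{ij}}$ and $U_j^{\n}|_{U_{ij}}$ as affine NC hulls of $h|_{U_{ij}}$, exactly as you do, with the restriction statements (Ore localization of the NC structure sheaf, base change of formal smoothness, isomorphism on $\cC om$) that you spell out left implicit there. The only point to flag is your aside on the general case: a refinement by affines whose pairwise intersections are affine need not exist (semi-separatedness can fail), but since the moduli spaces to which the corollary is applied are quasi-projective, hence separated, the overlaps $U_{ij}$ are affine and your main argument already covers everything the paper needs.
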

\begin{rmk}\label{rmk:NC}
In the situation of Corollary~\ref{cor:sit}, 
suppose that each 
$h_{U_i^{\n}} \to h|_{U_i}$ 
is an isomorphism. 
Then the isomorphism 
of $U_i^{\n}$ and $U_{j}^{\n}$
over $U_{ij}$ is canonically 
determined by $h$, 
so the quasi NC structure 
$\{U_i\}_{i\in \mathbb{I}}$ gives an NC structure
on $M$. 
\end{rmk}
We next describe
some 
condition for the functor 
$h_Y \to h$ to give an NC hull. 
For a functor $h \colon \nN \to \sS et$
and a
Cartesian 
diagram of algebras in $\nN$
\begin{align*}
\xymatrix{
\Lambda_{12} \ar[r]^{q_1} \ar[d]_{q_2} & \Lambda_1 \ar[d]^{p_1} \\
\Lambda_2 \ar[r]_{p_2} & \Lambda
}
\end{align*}
we consider the natural map 
\begin{align}\label{h:natu}
h(\Lambda_{12}) \to h(\Lambda_{1}) \times_{h(\Lambda)} h(\Lambda_2). 
\end{align}
\begin{prop}\label{prop:hull}
Let $h \colon \nN \to \sS et$ be a functor, 
$Y$ a smooth NC scheme and $\phi \colon h_Y \to h$
a natural transform. 
Suppose that $\phi$ is an
isomorphism on $\cC om$, and the following conditions hold
on the map (\ref{h:natu}):
\begin{enumerate}
\item
The map (\ref{h:natu})
is
surjective if  
 $\Lambda_1=\Lambda_2$ and $p_1=p_2$ is a central 
extension.  
\item
The map (\ref{h:natu}) is
bijective if $\Lambda$ is commutative and 
$\Lambda_2=\Lambda \oplus J$
for a $\Lambda$-module $J$. 
Here $\Lambda \oplus J$ 
is the trivial extension, i.e. 
$(a_1, m_1)(a_2, m_2)=(a_1 a_2, m_1 a_2+a_1 m_2)$.  
\end{enumerate}
Then $\phi$ is formally smooth, i.e. 
$\phi \colon h_Y \to h$ is an NC hull of $h$.
\end{prop}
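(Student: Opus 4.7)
The plan is to verify formal smoothness of $\phi$ directly. Fix a central extension $0 \to J \to \Lambda' \xrightarrow{\pi} \Lambda \to 0$ in $\nN$ and a pair $(y, x') \in h_Y(\Lambda) \times_{h(\Lambda)} h(\Lambda')$; the goal is to produce $y' \in h_Y(\Lambda')$ with $y'|_\Lambda = y$ and $\phi(y') = x'$. First I would use NC smoothness of $Y$ to obtain some lift $\tilde y \in h_Y(\Lambda')$ of $y$, and set $\tilde x' \cneq \phi(\tilde y)$. Since both $\tilde x'$ and $x'$ lie over $\phi(y) \in h(\Lambda)$, the task reduces to modifying $\tilde y$ within its fiber over $y$ so that its $\phi$-image changes from $\tilde x'$ to $x'$.

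To measure the discrepancy, form $\Lambda'' \cneq \Lambda' \times_\Lambda \Lambda'$. Since $J^2 = 0$ and $J$ is central in $\Lambda'$, the map $(a, b) \mapsto (a, b - a)$ identifies $\Lambda''$ with $\Lambda' \oplus J$ endowed with product $(a_1, j_1)(a_2, j_2) = (a_1 a_2, a_1 j_2 + j_1 a_2)$; in these coordinates the two projections to $\Lambda'$ become $(a, j) \mapsto a$ and $(a, j) \mapsto a + j$. Condition (1) applied to $p_1 = p_2 = \pi$ produces $z \in h(\Lambda'')$ whose pair of projections is $(\tilde x', x')$, and NC smoothness of $Y$ provides $\tilde w \in h_Y(\Lambda'')$ lifting $(\tilde y, \tilde y)$. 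The elements $\phi(\tilde w)$ and $z$ both project to $\tilde x'$ under the first projection, so their ``difference'' lies in the fiber of $h(\Lambda'') \to h(\Lambda')$ over $\tilde x'$.

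The crucial step is to interpret this discrepancy on the $h_Y$-side. The first projection $\Lambda'' \to \Lambda'$ is itself a canonically split central extension with kernel $J$; its base-change along $\Lambda' \to \Lambda^{ab}$ is precisely the trivial square-zero extension $\Lambda^{ab} \oplus J \to \Lambda^{ab}$ of the commutative algebra $\Lambda^{ab}$ (where centrality of $J$ together with $J^2 = 0$ is used to check that the $\Lambda$-module structure on $J$ descends appropriately to $\Lambda^{ab}$). Condition (2) then pins down the fiber of $h(\Lambda^{ab} \oplus J) \to h(\Lambda^{ab})$ over the image of $\phi(y)$ as a canonical abelian group of ``$J$-tangent vectors''. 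Because $\phi$ restricts to an isomorphism on $\cC om$, the same group governs the $h_Y$-side via the iso $h_Y|_{\cC om} \cong h|_{\cC om}$; NC smoothness of $Y$ then upgrades this into an identification of the fiber of $h_Y(\Lambda'') \to h_Y(\Lambda')$ over $\tilde y$ with the fiber of $h(\Lambda'') \to h(\Lambda')$ over $\tilde x'$, compatibly with $\phi$. Transporting the discrepancy class across this identification yields $\tilde w' \in h_Y(\Lambda'')$ with $\phi(\tilde w') = z$ and first projection $\tilde y$, and the second projection of $\tilde w'$ is then the desired $y'$.

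The main obstacle is the third step: setting up compatible torsor structures on the two fibers. This is where all the hypotheses must be invoked simultaneously—condition (1) to produce the comparison class $z$, condition (2) to rigidify the fiber structure at the commutative level, the isomorphism on $\cC om$ to match these structures across $\phi$, and NC smoothness of $Y$ to propagate the match from $\cC om$ to general NC test algebras. Additional care is needed because $J$ is a priori only a $\Lambda$-module rather than a $\Lambda^{ab}$-module, so one must verify that centrality of $J$ inside $\Lambda'$, combined with $J^2 = 0$, suffices to make the relevant reductions to the setting of condition (2) well-posed.
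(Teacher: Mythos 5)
Your proposal is correct and follows essentially the same route as the paper: both arguments lift along the central extension using smoothness of $Y$, invoke condition (1) on $\Lambda'\times_{\Lambda}\Lambda'$ to produce a comparison element, and then use the isomorphism $\Lambda'\times_{\Lambda}\Lambda'\cong \Lambda'\times_{\Lambda^{ab}}(\Lambda^{ab}\oplus J)$ together with condition (2) and the isomorphism on $\cC om$ to identify the relevant fibers with $\mathrm{Der}(\oO_{Y^{ab}},J)$ and correct the lift. The only cosmetic difference is that the paper phrases the last step as a transitive action of $\mathrm{Der}(\oO_{Y^{ab}},J)$ on the fiber $h(p)^{-1}(\eta)$, whereas you transport a discrepancy class across an explicit identification of fibers; these are the same argument, and your residual worry about $J$ being a $\Lambda^{ab}$-module is resolved by the standard identity $[a,b]j=[a,bj]-b[a,j]=0$ for central $J$.
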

\begin{proof}
The proof is similar to the classical argument 
on the existence of pro-representable 
hulls (cf.~\cite[Theorem~2.11]{Schle}).
We consider a central extension in $\nN$
\begin{align*}
0 \to J \to \Lambda' \stackrel{p}{\to} \Lambda \to 0
\end{align*}
and show that 
the map 
\begin{align}\label{maph}
h_Y(\Lambda') \to h(\Lambda') \times_{h(\Lambda)} h_Y(\Lambda)
\end{align}
is surjective. An element of the RHS of (\ref{maph})
is given by a commutative diagram
\begin{align}\label{diagram}
\xymatrix{
h_Y \ar[r]^{\phi} & h \\
\Spf \Lambda \ar[r] \ar[u]^{\alpha} & \Spf \Lambda'. \ar[u]_{\beta}
 \ar@{.>}[lu]^{\gamma}
}
\end{align}
We need to find a dotted arrow $\gamma$
so that the 
both of lower and upper triangles in (\ref{diagram})
are commutative. 
Since $h_Y$ is smooth, 
there is a dotted arrow $\gamma$
in (\ref{diagram}) 
so that the lower triangle is commutative. 
The set of possible choices of 
such a $\gamma$ is a principal homogeneous 
space over 
$\mathrm{Der}(\oO_{Y^{ab}}, J)$. 
Here 
we regard $J$ as an $\oO_{Y^{ab}}$-module 
by the algebra homomorphism $\alpha^{ab} \colon \oO_Y^{ab} \to \Lambda^{ab}$
induced by $\alpha$. 

On the other hand, 
we have the isomorphism
\begin{align*}
\Lambda' \times_{\Lambda} \Lambda'
\stackrel{\cong}{\to}\Lambda' \times_{\Lambda^{ab}}(\Lambda^{ab} \oplus J)
\end{align*}
given by 
$(x, y) \mapsto (x, x^{ab}+y-x)$. 
Therefore the assumption on the map (\ref{h:natu}) 
implies 
the surjection
\begin{align}\label{surject}
h(\Lambda') \times_{h(\Lambda^{ab})} h(\Lambda^{ab}\oplus J) 
\twoheadrightarrow
h(\Lambda') \times_{h(\Lambda)} h(\Lambda'). 
\end{align}
Let $\eta \in h(\Lambda)$
be the element corresponding 
to the composition $\phi \circ \alpha$
in the diagram (\ref{diagram}). 
The surjection (\ref{surject}) restricts to the surjection
\begin{align}\label{restrict}
h(p)^{-1}(\eta) \times h(q)^{-1}(\eta^{ab}) \twoheadrightarrow
h(p)^{-1}(\eta) \times h(p)^{-1}(\eta). 
\end{align}
Here $q \colon \Lambda^{ab} \oplus J \to \Lambda^{ab}$ is the projection. 
Since $\eta^{ab} \in h(\Lambda^{ab})$
is identified with the morphism $\alpha^{ab} \colon 
\oO_{Y}^{ab} \to \Lambda^{ab}$, 
we have
\begin{align*}
h(q)^{-1}(\eta^{ab})=
\mathrm{Der}(\oO_{Y^{ab}}, J).
\end{align*}
Therefore
the surjection (\ref{restrict})
shows that 
$\mathrm{Der}(\oO_{Y^{ab}}, J)$
acts on 
$h(p)^{-1}(\eta)$
transitively. 

Note that if $\gamma$ is a dotted arrow in (\ref{diagram})
so that the lower triangle is commutative, 
then we have
\begin{align*}
\phi \circ \gamma \in h(p)^{-1}(\eta), \ 
\beta \in h(p)^{-1}(\eta).
\end{align*}
Therefore by acting $\mathrm{Der}(\oO_{Y^{ab}}, J)$
to 
$\gamma$, 
we can also make the upper triangle of (\ref{diagram}) commutative. 
\end{proof}

Finally we state the 
construction of NC structures via functors. 
In the following 
proposition, we put $\nN_{\infty}=\nN$.

\begin{prop}\label{prop:hull2}
For $d \in [0, \infty]$, let 
$h \colon \nN_d \to \sS et$ be a functor, 
$Y$ a $d$-smooth NC scheme and $\phi \colon h_Y|_{\nN_d} \to h$
a natural transform. 
Suppose that $\phi$ is an
isomorphism on $\nN_e$
for some $e<d$, and the following conditions hold
on the map (\ref{h:natu})
\begin{enumerate}
\item
The map (\ref{h:natu})
is
a bijection if  
 $\Lambda_1=\Lambda_2$ and $p_1=p_2$ is a central 
extension with $\Lambda \in \nN_i$, $e\le i<d$.  
\item
The map (\ref{h:natu}) is
a bijection if $\Lambda$ is commutative and 
$\Lambda_2=\Lambda \oplus J$
for a $\Lambda$-module $J$. 
\end{enumerate}
Then $\phi$ is an isomorphism of functors. 
\end{prop}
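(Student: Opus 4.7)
The plan is to proceed by induction on $i \in \{e, e+1, \ldots, d\}$, proving that $\phi_\Lambda \colon h_Y(\Lambda) \to h(\Lambda)$ is a bijection for every $\Lambda \in \nN_i$; the base case $i = e$ is the given hypothesis, and $i = d$ (with the convention $\nN_\infty = \nN$) yields the conclusion. The inductive step closely follows the proof of Proposition~\ref{prop:hull}, upgrading its surjectivity arguments to bijectivity by virtue of the strengthened bijectivity statements in conditions (1) and (2).

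Fix $\Lambda \in \nN_{i+1}$ with $e \le i < d$, and set $J = F^{i+1}\Lambda$, so that $p \colon \Lambda \to \Lambda^{\leq i}$ is a central extension with kernel $J$; since $F^{i+2}\Lambda = 0$, the induced $\Lambda$-action on $J$ factors through $\Lambda^{ab}$. Just as in the proof of Proposition~\ref{prop:hull}, one has an algebra isomorphism
\begin{equation*}
\Lambda \times_{\Lambda^{\leq i}} \Lambda \cong \Lambda \times_{(\Lambda^{\leq i})^{ab}} \bigl((\Lambda^{\leq i})^{ab} \oplus J\bigr),\quad (x,y) \mapsto (x,(\bar x, y-x)).
\end{equation*}
Applying $h$ to this and invoking conditions (1) and (2) as bijections gives
\begin{equation*}
h(\Lambda) \times_{h(\Lambda^{\leq i})} h(\Lambda) \cong h(\Lambda) \times_{h((\Lambda^{\leq i})^{ab})} h\bigl((\Lambda^{\leq i})^{ab} \oplus J\bigr),
\end{equation*}
and the analogous bijection for $h_Y$ in place of $h$ holds via the functor-of-points description of $\Spf$ of the relevant NC nilpotent algebras.

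Now fix $\tilde\xi_0 \in h_Y(\Lambda^{\leq i})$ and set $\xi_0 = \phi(\tilde\xi_0) \in h(\Lambda^{\leq i})$. Since $Y$ is $d$-smooth and $i+1 \leq d$, formal smoothness yields $h_Y(p)^{-1}(\tilde\xi_0) \ne \emptyset$; pick a basepoint $\tilde\xi' \in h_Y(p)^{-1}(\tilde\xi_0)$ with image $\xi' = \phi(\tilde\xi') \in h(p)^{-1}(\xi_0)$. Pinning the first coordinate of each bijection above to this basepoint and projecting onto the second produces identifications
\begin{equation*}
h_Y(p)^{-1}(\tilde\xi_0) \cong h_Y(q)^{-1}(\tilde\xi_0^{ab}), \qquad h(p)^{-1}(\xi_0) \cong h(q)^{-1}(\xi_0^{ab}),
\end{equation*}
where $q \colon (\Lambda^{\leq i})^{ab} \oplus J \to (\Lambda^{\leq i})^{ab}$ is the projection. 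Since $(\Lambda^{\leq i})^{ab} \oplus J$ is commutative, it lies in $\nN_0 \subset \nN_e$; the hypothesis then gives that $\phi$ is an isomorphism between the two right-hand sides, compatibly with the chosen basepoints. By naturality of $\phi$, this identifies the restriction of $\phi_\Lambda$ to each fiber of $h_Y(\Lambda) \to h_Y(\Lambda^{\leq i})$ as a bijection onto the corresponding fiber of $h(\Lambda) \to h(\Lambda^{\leq i})$. Combined with the inductive hypothesis on the base $\Lambda^{\leq i}$, this forces $\phi_\Lambda$ to be a bijection on $\Lambda \in \nN_{i+1}$, completing the induction.

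The main obstacle I anticipate is verifying that $h_Y$ itself satisfies the analogues of conditions (1) and (2) invoked above. These are not stated hypotheses, but should follow from the functor-of-points description of affine NC schemes $\Spf \Lambda$ (Subsection~\ref{subsec:NCnil}): for NC nilpotent $\Lambda$ the underlying space of $\Spf \Lambda$ is a finite set of closed points of $Y^{ab}$, so morphisms from $\Spf \Lambda$ to $Y$ are determined by their restrictions to affine charts of $Y$, reducing the Cartesian-preservation claim for $h_Y$ to the universal property of fiber products of algebras in the affine case.
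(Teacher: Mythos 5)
Your induction along the NC filtration is correct and is, in substance, the argument that the paper delegates to \cite[Lemma~2.3.6]{Kap17} (the paper's own ``proof'' is only that citation plus the remark that $e>0$ needs no modification). The mechanism is the same one used in Proposition~\ref{prop:hull}: the central extension $\Lambda \to \Lambda^{\le i}$ with kernel $J=F^{i+1}\Lambda$ (note $J^2\subset F^{2i+2}\Lambda=0$ and $[J,\Lambda]\subset F^{i+2}\Lambda=0$, so it really is central, and $F^1\Lambda\cdot J=0$ makes $J$ a $\Lambda^{ab}$-module), the isomorphism onto $\Lambda\times_{\Lambda^{ab}}(\Lambda^{ab}\oplus J)$, and the resulting identification of fibers of $h(p)$ with fibers over the commutative trivial extension, where the hypothesis on $\nN_e\supset\cC om$ takes over; $d$-smoothness supplies the basepoint in each $h_Y(p)^{-1}(\tilde\xi_0)$. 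Two small points. First, the fact that $h_Y$ itself takes these two Cartesian squares to Cartesian squares of sets is indeed needed and is the standard representability statement; but your justification misstates the reason --- the underlying space of $\Spf\Lambda$ for $\Lambda\in\nN$ is $\Spec\Lambda^{ab}$, which need not be a finite set of points ($\nN$ contains all of $\cC om$). What saves the reduction to affine charts is that all four algebras in each square surject onto $\Lambda^{ab}$ with nilpotent kernel, so all the $\Spf$'s share one underlying topological space and a morphism to $Y$ is assembled chart by chart, where the claim is the universal property of the fiber product of algebras. Second, you should say explicitly that the horizontal fiber identifications commute with $\phi$ because they are induced by algebra homomorphisms and $\phi$ is natural, with the basepoints matched by $\xi'=\phi(\tilde\xi')$; you do arrange this, so it is only a matter of emphasis, not a gap.
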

\begin{proof}
The result for the $e=0$
case is given in~\cite[Lemma~2.3.6]{Kap17}. 
The $e>0$ case is similarly proved without any 
major modification. 
\end{proof}

\subsection{NC hull and pro-representable hull}\label{subsec:pro}
Let 
\begin{align*}
\nN^{\mathrm{loc}} \subset \nN
\end{align*}
be the subcategory of local 
finite dimensional $\mathbb{C}$-algebras, i.e. 
an object of $\nN^{\mathrm{loc}}$ is  
a finite dimensional $\mathbb{C}$-algebra $\Lambda$
having the unique two sided maximal ideal 
${\bf n} \subset \Lambda$. 
Note that a complete local $\mathbb{C}$-algebra $\widehat{R}$
defines the functor 
\begin{align*}
h_{\widehat{R}} \colon \nN^{\rm{loc}} \to \sS et
\end{align*}
by sending 
$\Lambda$
to the set of local $\mathbb{C}$-algebra 
homomorphisms 
$\widehat{R} \to \Lambda$.
We recall the notion of a pro-representable hull in~\cite{Schle}. 
\begin{defi}
Let $h^{\rm{loc}} \colon \nN^{\mathrm{loc}} \to \sS et$
be a functor. 
A pro-representable hull of $h^{\rm{loc}}$ 
is a complete local $\mathbb{C}$-algebra 
$\widehat{R}$ together with 
a formally smooth natural transform 
$h_{\widehat{R}} \to h^{\rm{loc}}$, 
which is bijective on 
$\mathbb{C}[t]/t^2$. 
\end{defi}
The proof similar to Lemma~\ref{NCh}
shows that 
a pro-representable hull 
is unique up
to non-canonical isomorphisms (cf.~\cite[Proposition~2.9]{Schle}). 

Let $h \colon \nN \to \sS et$
be a functor, and suppose that 
it has an NC hull 
$h_Y \to h$ for an NC scheme $Y$. 
Note that 
$h_Y(\mathbb{C}) \to h(\mathbb{C})$
is bijective, 
and they are identified with 
the set of closed points in $Y^{ab}$. 
Given a closed point $y \in Y^{ab}$, we
define the functor
\begin{align}\label{loc:funct}
h_{y}^{\rm{loc}} \colon \nN^{\rm{loc}} \to \sS et
\end{align}
by sending $(\Lambda, {\bf n})$
to the 
preimage of $y \in h(\mathbb{C})$
under the map 
\begin{align*}
h(\Lambda) \to h(\Lambda/{\bf n})=h(\mathbb{C}).
\end{align*} 
Let $m_y^{ab} \subset \oO_{Y^{ab}}$ be the 
ideal sheaf which defines $y$, 
and 
$m_y \subset \oO_Y$
the two sided ideal sheaf
given by the pull-back of $m_y^{ab}$
by the surjection 
$\oO_Y \twoheadrightarrow \oO_{Y^{ab}}$. 
We denote by
$\widehat{\oO}_{Y, y}$
the completion of $\oO_Y$
by $m_y$. 
We also set
\begin{align*}
\aA rt^{\rm{loc}} \cneq \nN^{\rm{loc}} \cap \cC om
\end{align*}
i.e. $\aA rt^{\rm{loc}}$ is the category of 
local Artinian $\mathbb{C}$-algebras. 
\begin{lem}\label{lem:prohull}
In the above situation, 
the functor $h_{y}^{\rm{loc}}$ has a 
pro-representable hull 
$h_{\widehat{\oO}_{Y, y}} \to h_{y}^{\rm{loc}}$, 
which is an isomorphism on $\aA rt^{\rm{loc}}$. 
\end{lem}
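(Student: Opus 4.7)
The plan is to construct a natural transformation $h_{\widehat{\oO}_{Y,y}}\to h_y^{\mathrm{loc}}$, then verify it is formally smooth and an isomorphism on $\aA rt^{\mathrm{loc}}$. For $(\Lambda,{\bf n})\in\nN^{\mathrm{loc}}$, a local algebra homomorphism $\psi\colon\widehat{\oO}_{Y,y}\to\Lambda$ composes with the canonical map $\oO_Y\to\widehat{\oO}_{Y,y}$ (defined on an affine NC neighborhood of $y$) to produce an element of $h_Y(\Lambda)$; applying $\phi$ yields an element of $h(\Lambda)$. Locality of $\psi$ forces its reduction modulo ${\bf n}$ to correspond to $y\in h(\mathbb{C})$, so the image lies in $h_y^{\mathrm{loc}}(\Lambda)$, and this defines the required natural transformation.

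For formal smoothness, I take a central extension $0\to J\to\Lambda'\to\Lambda\to 0$ in $\nN^{\mathrm{loc}}$ and a compatible pair $(\eta',\psi)\in h_y^{\mathrm{loc}}(\Lambda')\times_{h_y^{\mathrm{loc}}(\Lambda)} h_{\widehat{\oO}_{Y,y}}(\Lambda)$, and apply formal smoothness of the NC hull $\phi\colon h_Y\to h$ (valid on all of $\nN$) to lift $\psi$, viewed in $h_Y(\Lambda)$, to an algebra map $\psi'\colon\oO_Y\to\Lambda'$ whose image under $\phi$ is $\eta'$. The critical check is that $\psi'$ factors through $\widehat{\oO}_{Y,y}$: since $\psi(m_y)\subset{\bf n}$ and ${\bf n}'$ is the preimage of ${\bf n}$ under $\Lambda'\twoheadrightarrow\Lambda$ (using that $\Lambda'$ is finite-dimensional local with residue field $\mathbb{C}$), we get $\psi'(m_y)\subset{\bf n}'$; nilpotence of ${\bf n}'$ then forces $\psi'$ to factor through $\oO_Y/m_y^N$ for some $N$, and hence through $\widehat{\oO}_{Y,y}$.

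For the isomorphism on $\aA rt^{\mathrm{loc}}$, I exploit that $\phi$ is an isomorphism on $\cC om\supset\aA rt^{\mathrm{loc}}$. Given a commutative Artinian local $R$, any algebra map $\oO_Y\to R$ factors through $\oO_Y^{ab}=\oO_{Y^{ab}}$, so $h_Y(R)$ is identified with scheme morphisms $\Spec R\to Y^{ab}$, and thus $h_y^{\mathrm{loc}}(R)\cong\Hom_{\mathrm{loc}}(\widehat{\oO}_{Y^{ab},y},R)$. On the other side, commutativity of $R$ gives $\Hom_{\mathrm{loc}}(\widehat{\oO}_{Y,y},R)=\Hom_{\mathrm{loc}}(\widehat{\oO}_{Y,y}^{ab},R)$, and one then identifies $\widehat{\oO}_{Y,y}^{ab}\cong\widehat{\oO}_{Y^{ab},y}$ using that $m_y$ is the preimage of $m_y^{ab}$, so that each finite quotient $\oO_Y/m_y^N$ has abelianization $\oO_{Y^{ab}}/(m_y^{ab})^N$.

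The main technical obstacle I expect is precisely this last identification $\widehat{\oO}_{Y,y}^{ab}\cong\widehat{\oO}_{Y^{ab},y}$, since abelianization does not in general commute with inverse limits; it should nonetheless follow from the compatibility of the NC-filtration with the $m_y$-adic one combined with the fact that the system $\{(\oO_Y/m_y^N)^{ab}\}_N$ and $\{\oO_{Y^{ab}}/(m_y^{ab})^N\}_N$ are canonically isomorphic termwise. Apart from this point, the rest of the argument is a straightforward unwinding of the definitions of NC hull and of local deformation functor, combined with Lemma~\ref{lem:id} to ensure uniqueness of the various lifts.
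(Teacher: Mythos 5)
Your argument is correct and is essentially the paper's proof: the paper defines $h_{Y,y}^{\rm loc}$ by the same fiber construction, observes that the NC hull $\phi\colon h_Y\to h$ immediately gives formal smoothness of $h_{Y,y}^{\rm loc}\to h_y^{\rm loc}$ and an isomorphism on $\aA rt^{\rm loc}$, and then identifies $h_{Y,y}^{\rm loc}=h_{\widehat{\oO}_{Y,y}}$ exactly as in your factoring argument through $\oO_Y/m_y^N$. The only difference is that by working with the functor $h_{Y,y}^{\rm loc}$ throughout (rather than with the ring $\widehat{\oO}_{Y,y}$ and its abelianization), the paper never needs the identification $\widehat{\oO}_{Y,y}^{ab}\cong\widehat{\oO}_{Y^{ab},y}$ that you single out as the main obstacle; since maps into a finite-dimensional local algebra factor through finite quotients, your termwise comparison does dispose of it, so this is a presentational rather than a substantive divergence.
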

\begin{proof}
The NC hull 
$h_Y \to h$ induces the natural 
transform $h_{Y, y}^{\rm{loc}} \to h_y^{\rm{loc}}$, 
which is formally smooth and an
isomorphism on $\aA rt^{\rm{loc}}$
by the definition of NC hull. 
For $(\Lambda, {\bf n}) \in \nN^{\rm{loc}}$, 
giving a morphism $\Spf \Lambda \to Y$
sending $\Spec \Lambda/{\bf n}$ to $y$
is equivalent to giving 
a local $\mathbb{C}$-algebra homomorphism 
$\widehat{\oO}_{Y, y} \to \Lambda$. 
Hence 
$h^{\rm{loc}}_{Y, y}=h_{\widehat{\oO}_{Y, y}}$, 
and $h_{\widehat{\oO}_{Y, y}} \to h_{y}^{\rm{loc}}$
is a pro-representable hull. 
\end{proof}

\section{NC thickening of moduli spaces of quiver representations}\label{sec:NC2}
In this section, we construct quasi NC structures on 
the moduli spaces of representations of quivers with
relations. 
\subsection{Representations of quivers}\label{subsec:Q}
Recall that a quiver 
consists of  
data
\begin{align*}
Q=(Q_0, Q_1, h, t)
\end{align*}
where $Q_0$, $Q_1$ are finite
sets (called the set of \textit{vertices}, \textit{arrows} respectively)
and 
$h, t \colon Q_1 \to Q_0$
are maps. 
The maps $h, t$ indicate the vertices at the head, tail
of each arrow respectively. 
\begin{defi}
A representation of a quiver $Q$
over an NC scheme $(Y, \oO_Y)$
consists of 
data
\begin{align}\label{dataE}
\wW=(\{\wW_v\}_{v \in Q_0}, 
\{\phi_a\}_{a \in Q_1})
\end{align}
where each 
$\wW_v$ is 
an object of $\Coh(Y)$, and 
$\phi_a \colon \wW_{t(a)} \to \wW_{h(a)}$
is a
morphism of coherent left $\oO_Y$-modules. 
\end{defi}
Given a representation (\ref{dataE}) of $Q$ over $Y$, 
we set
\begin{align*}
\wW_{\bullet} \cneq \bigoplus_{v\in Q_0} \wW_v. 
\end{align*}
Let $\mathbb{C}[Q]$ be 
the path algebra of $Q$. 
By the definition, we have the natural 
algebra homomorphism
\begin{align}\label{natural}
\mathbb{C}[Q]
\to \Hom_{\oO_Y}(\wW_{\bullet}, \wW_{\bullet})
\end{align}
sending $a \in Q_1$ to $\phi_a$.  
Recall that a
quiver with relation 
is a pair $(Q, I)$, 
where $Q$ is a quiver 
and $I \subset \mathbb{C}[Q]$
is a two sided ideal. 
\begin{defi}
Let $(Q, I)$ be a quiver with 
relation. 
A representation of $(Q, I)$ over an NC scheme 
$Y$
is a representation of $Q$ over $Y$
such that the map (\ref{natural}) 
is zero on $I$. 
\end{defi}
For 
another representation 
$\wW'$
of $(Q, I)$
over $Y$, 
the set of morphisms 
$\Hom(\wW, \wW')$ 
consists of
coherent left $\oO_Y$-module 
homomorphisms $\wW_v \to \wW_v'$ for each 
$v \in Q_0$ which commute with 
$\phi_a$ and $\phi_a'$.
The category
\begin{align}\label{QI}
\mathrm{Rep}((Q, I)/Y)
\end{align}
is defined to be the category of 
representations of $(Q, I)$ over $Y$.  
For objects $\wW, \wW'$ in (\ref{QI}), we 
also have the 
sheaf homomorphisms
$\hH om(\wW, \wW')$
on $Y$
by associating an open subset
$U \subset Y$ with 
$\Hom(\wW|_{U}, \wW'|_{U})$, 
which is an object of $\Coh(Y)$
if $Y$ is a commutative scheme. 
If $I=\{0\}$, or $Y=\Spf \Lambda$ for 
$\Lambda \in \nN$,
we simply write (\ref{QI})
as $\mathrm{Rep}(Q/Y)$, or 
$\mathrm{Rep}((Q, I)/\Lambda)$
respectively. 
If further $Y=\Spec \mathbb{C}$, we
write (\ref{QI}) as $\mathrm{Rep}(Q, I)$, 
and $\mathrm{Rep}(Q)$ if $I=\{0\}$.
\begin{rmk}
For $\Lambda \in \nN$, 
the category 
$\mathrm{Rep}((Q, I)/\Lambda)$
is equivalent to the category 
of collections of finitely 
generated left $\Lambda$-modules 
$\{W_v\}_{v\in Q_0}$ together with 
left $\Lambda$-module homomorphisms 
$\phi_a \colon W_{t(a)} \to W_{h(a)}$,
such that the natural 
map $I \to \Hom_{\Lambda}(W_{\bullet}, W_{\bullet})$
is zero. 
We call such a collection $\{W_v\}_{v\in Q_0}$ as a
representation of $(Q, I)$ over $\Lambda$. 
\end{rmk}
Let $Q$ be a quiver and 
$\wW$ a representation of it over an
NC scheme $Y$. 
We call it 
\textit{flat} if 
each $\wW_v$ is a flat left $\oO_Y$-module. 
This is equivalent to that each $\wW_v$ is a 
locally free left $\oO_Y$-module.
We prepare the following lemma: 
\begin{lem}\label{lem:I}
(i) Let $\wW$ be a flat
 representation of $(Q, I)$ over a commutative scheme $T$, 
and $\jJ$ a coherent sheaf on $T$.
Suppose that for any $t\in T$, 
we have $\Hom(\wW|_{t}, \wW|_{t})=\mathbb{C}$.  
Then the morphism 
\begin{align*}
\jJ \to \hH om(\wW, \jJ \otimes_{\oO_T}\wW)
\end{align*}
given by $u\mapsto u\otimes \id$ is an isomorphism. 

(ii) Let $0 \to J \to \Lambda' \to \Lambda \to 0$
be a central extension in $\nN$ and 
$\wW$ a flat representation of $(Q, I)$ over $\Lambda'$. 
Then any automorphism of $\wW$ which is identity over
$\Lambda$ is given by the left multiplication
of a central element $1+u$ for $u\in J$. 
\end{lem}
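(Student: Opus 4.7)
My plan is to establish part (i) first and then deduce part (ii) by reducing to it. For part (i), I would begin with the special case $\jJ = \oO_T$: the natural map $\oO_T \to \hH om_{(Q,I)}(\wW, \wW)$ sending $1 \mapsto \id_{\wW}$ should be an isomorphism. Since $\wW$ is flat and finitely generated over the commutative base $T$, each $\wW_v$ is locally free of finite rank, and $\hH om_{(Q,I)}(\wW, \wW)$ is coherent, presented as the kernel of the natural map of locally free sheaves
\begin{align*}
\prod_{v \in Q_0} \hH om_{\oO_T}(\wW_v, \wW_v) \to \prod_{a \in Q_1} \hH om_{\oO_T}(\wW_{t(a)}, \wW_{h(a)}).
\end{align*}
The fiberwise hypothesis $\Hom(\wW|_t, \wW|_t) = \mathbb{C}$ makes this kernel have constant fiber dimension $1$, and combined with the nowhere-vanishing global section $\id$, a standard base-change / Nakayama argument forces the kernel to equal the line subsheaf $\oO_T \cdot \id \cong \oO_T$. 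For general $\jJ$, the map $\jJ \to \hH om_{(Q,I)}(\wW, \jJ \otimes_{\oO_T} \wW)$ is fiberwise an isomorphism by the special case applied to $\wW|_t$, and a coherent-sheaf argument (using vertex-wise local freeness, so that $\hH om(\wW_v, \jJ \otimes \wW_v) = \jJ \otimes \hH om(\wW_v, \wW_v)$) then promotes this to a global isomorphism.

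For part (ii), let $\psi$ be an automorphism of $\wW$ with $\psi \equiv \id \bmod J$ and set $\delta = \psi - \id$. Flatness of $\wW$ over $\Lambda'$ yields the short exact sequence
\begin{align*}
0 \to J \otimes_{\Lambda'} \wW \to \wW \to \wW \otimes_{\Lambda'} \Lambda \to 0,
\end{align*}
so $\delta$ lands in $J \otimes_{\Lambda'} \wW$. Since $J$ is central in $\Lambda'$ with $J^2 = 0$, the $\Lambda'$-action on $J \otimes \wW$ factors through $\Lambda^{ab}$, and for $j \in J$, $w \in \wW$ one computes $\psi(jw) = j\psi(w) = j w + j \delta(w) = jw$ because $j \delta(w) \in J \cdot J\wW = 0$; hence $\delta$ vanishes on $J \otimes \wW$ and descends to a $\Lambda^{ab}$-linear morphism of $(Q,I)$-representations $\bar\delta \colon \wW^{ab} \to J \otimes_{\Lambda^{ab}} \wW^{ab}$. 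Now I apply part (i) with $T = \Spec \Lambda^{ab}$ and with $\jJ$ the coherent sheaf corresponding to the $\Lambda^{ab}$-module $J$, identifying $\bar\delta$ canonically with an element $u \in J$; a direct check then gives $\psi = (1 + u) \cdot -$.

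The main technical obstacle is the base-change step in part (i), namely upgrading the fiberwise simplicity $\End(\wW|_t) = \mathbb{C}$ into a global identification of the coherent kernel sheaf with $\oO_T$; this is the engine of the lemma, after which the extension to general $\jJ$ and the whole of part (ii) follow essentially formally. A subtlety worth flagging is that part (ii) implicitly requires the simplicity hypothesis of (i) over $\Spec \Lambda^{ab}$, since without it $\wW$ could decompose and admit automorphisms identity modulo $J$ acting by distinct scalars on different summands, so (ii) should be read under the same fiberwise simplicity assumption as (i).
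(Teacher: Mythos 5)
Your proof is correct. Part (ii) is essentially the paper's argument: write the automorphism as $\id$ plus a perturbation, observe that the perturbation kills $J\otimes\wW$ and lands in $J\otimes_{\Lambda^{ab}}\wW^{ab}$, descend it to a morphism $\wW^{ab}\to J\otimes_{\Lambda^{ab}}\wW^{ab}$ of representations over $\Lambda^{ab}$, and apply (i). For part (i) you take a genuinely more elementary route. The paper works with $\dR\hH om(\wW,\wW)$: hereditariness of $\mathbb{C}[Q]$ gives $\Ext^{\ge 2}(\wW|_t,\wW|_t)=0$, and the base-change spectral sequence $\tT or_{-p}^{\oO_T}(\oO_t,\eE xt^{q}(\wW,\wW))\Rightarrow\Ext^{p+q}(\wW|_t,\wW|_t)$ combined with the simplicity hypothesis yields both $\oO_T\cong\hH om(\wW,\wW)$ and the flatness of $\eE xt^1(\wW,\wW)$, after which the general statement follows from $\jJ\dotimes_{\oO_T}\dR\hH om(\wW,\wW)\cong\dR\hH om(\wW,\jJ\dotimes_{\oO_T}\wW)$. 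You instead exhibit $\hH om(\wW,\wW)$ as the kernel of an explicit two-term complex of bundles; this is the same hereditariness in concrete form, and the paper's flatness of $\eE xt^1$ reappears in your argument as the local splitness of the quotient map. What each buys: the paper's version generalizes painlessly (it is the same formalism used later for sheaves), while yours avoids spectral sequences entirely.

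Two points in (i) should be stated more carefully, though neither is a real gap. Over a non-reduced base (the relevant case, e.g.\ $T=\Spec\Lambda^{ab}$), the hypothesis gives you $\ker(\phi_t)=\mathbb{C}\cdot\id_t$, which is not the same as the fiber $\ker(\phi)\otimes k(t)$ of the kernel sheaf; the correct mechanism is that the induced map $E/\oO_T\cdot\id\to F$ is injective on every fiber, hence a locally split injection of bundles, whence $\ker(\phi)=\oO_T\cdot\id$. Likewise, for general $\jJ$ one cannot conclude from a fiberwise isomorphism alone (the target $\hH om(\wW,\jJ\otimes\wW)$ need not be flat); rather, tensor the local splitting with $\jJ$ to identify the kernel with $\jJ$ directly. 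Finally, your remark that (ii) silently uses the fiberwise simplicity of $\wW^{ab}$ is accurate: the paper's statement omits this hypothesis but its proof invokes (i) exactly as yours does, and the hypothesis is satisfied in every application in the paper, where the representations are families of stable, hence simple, objects.
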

\begin{proof}
(i)
We may assume that $I=\{0\}$. 
 For $t\in T$, 
we have the isomorphism
in $D(\mathrm{Rep}(Q))$: 
\begin{align*}
\oO_{t} \dotimes_{\oO_T}
\dR \hH om(\wW, \wW) \cong 
\dR \Hom(\wW|_{t}, \wW|_{t}). 
\end{align*}
Hence we have the spectral sequence
\begin{align*}
E_{2}^{p, q}=
\tT or_{-p}^{\oO_T}(\oO_t, \eE xt^q(\wW, \wW))
\Rightarrow \Ext^{p+q}(\wW|_{t}, \wW|_{t}).
\end{align*}
Since $\mathbb{C}[Q]$ is hereditary, 
we have 
$\Ext^{\ge 2}(\wW|_{t}, \wW|_{t})=0$.  
Using the above spectral sequence 
and the assumption $\Hom(\wW|_{t}, \wW|_{t})=\mathbb{C}$,
we see that  
\begin{align*}
\eE xt^{\ge 2}(\wW, \wW)=0, \ 
E_{2}^{-1, 1}=0, \ 
E_2^{0, 0}=\mathbb{C}.
\end{align*}
The vanishing of $E_{2}^{-1, 1}$
shows that 
$\eE xt^1(\wW, \wW)$ is flat over $\oO_T$, 
and 
$E_2^{0, 0}=\mathbb{C}$
shows that the natural map 
$\oO_T \to \hH om(\wW, \wW)$
is an isomorphism\footnote{Here we use the 
assumption that $T$ is commutative, 
as otherwise there is no natural map
$\oO_T \to \hH om(\wW, \wW)$}. 
In particular for $\jJ \in \Coh(T)$, 
we have the isomorphism
\begin{align*}
\hH^0(\jJ \dotimes_{\oO_T} \dR \hH om(\wW, \wW))
\cong \jJ. 
\end{align*}
Then the result follows by 
taking the zero-th cohomology of the following
isomorphism in $D(\Coh(T))$
\begin{align*}
\jJ \dotimes_{\oO_T} \dR \hH om(\wW, \wW)
\cong \dR \hH om(\wW, \jJ \dotimes_{\oO_T} \wW). 
\end{align*}

(ii) 
Let $g$ be an automorphism of $\wW$
which is identity over $\Lambda$. 
We have the commutative diagram
\begin{align*}
\xymatrix{
0 \ar[r] & J\otimes_{\Lambda^{ab}} \wW^{ab}
\ar[r]^-{\iota} \ar[d]^{\id} & \wW \ar[r]^-{p}
\ar[d]^{g}
 & \Lambda \otimes_{\Lambda'} \wW \ar[r]\ar[d]^{\id} & 0 \\
0 \ar[r] & J\otimes_{\Lambda^{ab}} \wW^{ab} 
\ar[r]^-{\iota} & 
\wW \ar[r]^-{p}
 & \Lambda \otimes_{\Lambda'}\wW
 \ar[r] & 0. 
}
\end{align*}
By the above commutative diagram, 
the isomorphism $g$ is written 
as $\id +\iota \circ \alpha \circ p$
for some morphism
$\alpha \colon \Lambda \otimes_{\Lambda'} \wW \to 
J \otimes_{\Lambda^{ab}} \wW^{ab}$.  
The morphism $\alpha$ descends to the morphism
$\wW^{ab} \to J \otimes_{\Lambda^{ab}} \wW^{ab}$, 
which is given by $u \otimes \id$ for some $u\in J$
by (i). Therefore $g$ is the left multiplication of $1+u$. 
\end{proof}

Let $\wW$ be a representation of $Q$ over 
an NC scheme $Y$. 
We consider the natural
morphism of sheaves of $\oO_Y$ bi-modules
\begin{align}\notag
\wW_{\bullet} \otimes_{\mathbb{C}}
\Hom_{\oO_Y}(\wW_{\bullet}, \wW_{\bullet}) \otimes_{\mathbb{C}}
\hH om_{\oO_Y}(\wW_{\bullet}, \oO_Y) \to \oO_Y
\end{align}
given by 
$e \otimes f \otimes g \mapsto g\circ f(e)$. 
By composing it with (\ref{natural})
and the inclusion $I \subset \mathbb{C}[Q]$, 
we obtain the morphism of sheaves of $\oO_Y$ bi-modules
\begin{align}\label{nmor}
\wW_{\bullet} \otimes_{\mathbb{C}}
I \otimes_{\mathbb{C}}
\hH om_{\oO_Y}(\wW_{\bullet}, \oO_Y) \to \oO_Y. 
\end{align}
\begin{defi}\label{defi:ideal}
In the above situation, 
we define 
the \textit{ideal of relations in}
$I$ 
to be the image of (\ref{nmor}), and denote 
it by $\jJ_{I} \subset \oO_Y$. 
\end{defi}
If $\wW$ is flat, then 
$\jJ_I$ is locally the two sided ideal 
generated by the matrix components 
of the morphism (\ref{natural})
restricted to $I$. 
In particular, the map (\ref{natural}) is zero on $I$ 
if and only if $\jJ_I=0$.
If
$Z$ is the closed NC subscheme of $Y$
defined by 
$\oO_Z \cneq \oO_Y/\jJ_I$, 
the representation of $Q$
\begin{align*}
\wW|_{Z} \cneq 
(\{\oO_Z \otimes_{\oO_Y}\wW_{v}\}_{v\in Q_0}, 
\{\id \otimes \phi_a\}_{a\in Q_1})
\end{align*} 
over $Z$ is a representation of $(Q, I)$. 
\subsection{Moduli spaces of representations of quivers}
Let $(Q, I)$ be a quiver with relation. 
We set $\Gamma_{Q} \cneq \mathbb{Z}^{Q_0}$, and 
its inner product by 
\begin{align*}
\gamma \cdot \gamma'=\sum_{v\in Q_0} \gamma_v \cdot \gamma_v'. 
\end{align*} 
For an object $W$ in $\mathrm{Rep}(Q)$, its dimension 
vector $\dim W$ is defined by 
\begin{align*}
\dim W \cneq (\dim W_v)_{v \in Q_0} \in \Gamma_{Q}. 
\end{align*}
We recall the notion of King's 
$\theta$-stability on $\mathrm{Rep}(Q, I)$:
\begin{defi}\emph{(\cite{Kin})}
For $\theta \in \Gamma_{Q}$, 
a representation $W$ of $(Q, I)$
is called $\theta$-(semi)stable 
if $\theta \cdot \dim W=0$, and 
for any 
subobject $0 \neq W' \subsetneq W$
in $\mathrm{Rep}(Q, I)$, 
we have the inequality
\begin{align*}
\theta \cdot \dim W' > (\ge) 0. 
\end{align*}
\end{defi}
For $\gamma, \theta \in \Gamma_{Q}$ with 
$\theta \cdot \gamma=0$, the 2-functor
\begin{align}\label{stack:Q}
\mathfrak{M}^{}_{Q, I, \theta}(\gamma)
\colon \sS ch/\mathbb{C} \to \gG roupoid
\end{align}
is defined by sending 
a $\mathbb{C}$-scheme $T$
to the groupoid of flat
representations $\wW$ of $(Q, I)$ over $T$
such that 
$\wW|_{t}$ for any $t \in T$ 
is a 
$\theta$-semistable representation of $(Q, I)$
with dimension vector $\gamma$.
The 2-functor (\ref{stack:Q}) is known to be an algebraic 
stack of finite type over $\mathbb{C}$. 
Indeed, let $W$ be the affine space given by
\begin{align*}
W \cneq \prod_{a \in Q_1} \Hom(W_{t(a)}, W_{h(a)}). 
\end{align*}
By the construction of $W$, 
there exists a tautological 
representation of $Q$ over $W$. 
Let  
$M \subset W$
be the subscheme 
defined by the ideal of relations in $I$, 
whose  
 closed points
correspond to $(Q, I)$-representations. 
The subset of points $M^{ss} \subset M$ corresponding to 
$\theta$-semistable representations form 
an open subset of $M$. 
Also the group 
$G \cneq \prod_{v \in Q_0} \GL(W_v)$
acts on $W$ 
by 
\begin{align*}
(g_v) \cdot (\phi_a) =(g_{h(a)}^{-1} \circ \phi_a \circ g_{t(a)}).
\end{align*}
The $G$-action on $W$ preserves $M^{ss}$, 
and the stack (\ref{stack:Q}) is given by the quotient stack
\begin{align*}
\mathfrak{M}_{Q, I, \theta}^{}(\gamma)
=\left[M^{ss}/G \right]. 
\end{align*}
On the other hand, let 
\begin{align}\label{funct:M}
\mM_{Q, I, \theta}(\gamma)
\colon \sS ch/\mathbb{C} \to \sS et
\end{align}
be the functor 
defined by 
sending a 
$\mathbb{C}$-scheme $T$
to the set of equivalence classes of 
$\theta$-stable flat 
representations of $(Q, I)$ over 
$T$.
Here
 $\wW$ and $\wW'$ are called \textit{equivalent} if 
there is a line bundle $\lL$ on $T$
such that $\wW$ and $\wW'\otimes \lL$ are isomorphic 
in $\mathrm{Rep}((Q, I)/T)$. 
The following result was proved by King. 
\begin{thm}\emph{(\cite{Kin})}\label{thm:Kin}
If $\gamma \in \Gamma_{Q}$ is primitive, the 
functor $\mM_{Q, I, \theta}(\gamma)$ is represented by 
a quasi-projective scheme
$M_{Q, I, \theta}(\gamma)$, which 
is projective if $Q$ does not contain a loop. 
If $I=\{0\}$, the 
moduli space $M_{Q, \theta}(\gamma) \cneq M_{Q, \{0\}, \theta}(\gamma)$
is non-singular.   
\end{thm}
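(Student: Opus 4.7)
The plan is to realise $M_{Q, I, \theta}(\gamma)$ as a GIT quotient of $M^{ss}$ by the group $G = \prod_{v\in Q_0} \GL(W_v)$, and then to verify that it represents the functor $\mM_{Q,I,\theta}(\gamma)$. First I would introduce King's character $\chi_\theta \colon G \to \mathbb{G}_m$ given by $\chi_\theta((g_v)) = \prod_v \det(g_v)^{\theta_v}$; the equality $\theta\cdot\gamma=0$ makes $\chi_\theta$ trivial on the diagonal $\mathbb{G}_m \subset G$, so pulling back the standard one-dimensional $\mathbb{G}_m$-representation via $\chi_\theta$ equips the trivial line bundle on $M$ with a $G$-linearization $L_\theta$. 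The central technical input is a Hilbert--Mumford computation: a one-parameter subgroup $\lambda\colon \mathbb{G}_m \to G$ is the same as a $\mathbb{Z}$-grading of $W_\bullet$; the limit $\lim_{t\to 0}\lambda(t)\cdot[\wW]$ exists in $M$ precisely when the associated weight filtration of $\wW$ consists of subrepresentations; and the Mumford weight $\langle \chi_\theta, \lambda\rangle$ reads off the $\theta$-degree of that filtration. This identifies GIT $L_\theta$-(semi)stability with the $\theta$-(semi)stability defined above, so $M^{ss}/\!/G$ is a quasi-projective scheme whose closed points parametrise $S$-equivalence classes of $\theta$-semistable representations with dimension vector $\gamma$.

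Next I would use primitivity of $\gamma$ to upgrade semistability to stability. A strictly $\theta$-semistable representation $W$ with $\dim W = \gamma$ would admit a Jordan--H\"older filtration inside the $\theta$-semistable subcategory, hence a proper subrepresentation of dimension vector $\gamma'$ with $\theta\cdot \gamma' = 0$; taking the gcd over the associated graded pieces would then force $\gamma$ to be a nontrivial integer multiple of some primitive vector, contradicting the primitivity of $\gamma$. Consequently $M^{ss}=M^s$, and by Schur's lemma for $\theta$-stable representations the $G$-stabiliser of any point in $M^s$ is the diagonal $\mathbb{G}_m$; hence $PG \cneq G/\mathbb{G}_m$ acts freely on $M^s$ and $M^{ss}/\!/G = M^s/PG$ is a geometric quotient. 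Representability then follows by descent: for a flat family $\wW$ over $T$ one étale-locally trivialises the $W_v$ to produce an étale $G$-torsor $T' \to T$ with an equivariant classifying morphism $T' \to M^s$; this descends to a morphism $T \to M^s/PG$, and the residual $\mathbb{G}_m$-ambiguity in the local trivialisation is exactly the line bundle twist built into the equivalence relation defining $\mM_{Q,I,\theta}(\gamma)$.

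Finally, projectivity in the loop-free case follows because $\mathbb{C}[W]^G$ is generated by traces along oriented cycles of $Q$, so the absence of such cycles forces $\mathbb{C}[W]^G = \mathbb{C}$; the affine-GIT morphism $M^{ss}/\!/G \to \Spec\mathbb{C}[W]^G = \Spec\mathbb{C}$ is projective, hence so is the moduli space. Smoothness in the case $I = \{0\}$ is immediate, since then $M = W$ is a smooth affine space and $PG$ acts freely on the stable locus. The main obstacle in the whole argument is the Hilbert--Mumford identification between the GIT linearisation $L_\theta$ and King's slope-style stability condition: one must translate the nonnegativity of Mumford weights for all one-parameter subgroups into an inequality over arbitrary subrepresentations, and conversely turn destabilising subrepresentations into explicit destabilising $\lambda$. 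Once this standard-but-delicate calculation is in hand, the remaining steps are formal consequences of GIT together with $G$-torsor descent.
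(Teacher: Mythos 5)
Your GIT framework (the character $\chi_\theta$, the Hilbert--Mumford identification of GIT (semi)stability with $\theta$-(semi)stability, stabilizers of stable points being the diagonal $\mathbb{C}^{\ast}$, and projectivity of $M^{ss}/\!/G$ when there are no oriented cycles) is exactly King's route, which the paper does not reprove but simply cites, recording only that $\overline{G}=G/\mathbb{C}^{\ast}$ acts freely on $M^{s}$ so that $M_{Q,I,\theta}(\gamma)=M^{s}/\overline{G}$. However, your proposal contains a genuine error in the one place where the hypothesis on $\gamma$ actually matters. Primitivity of $\gamma$ does \emph{not} imply $M^{ss}=M^{s}$: for a quiver with three vertices, $\gamma=(1,1,1)$ and $\theta=(1,-1,0)$, a direct sum of a $\theta$-degree-zero representation of dimension vector $(1,1,0)$ with one of dimension vector $(0,0,1)$ is strictly semistable even though $\gamma$ is primitive. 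Your gcd argument via the Jordan--H\"older filtration does not work because the graded pieces need not have proportional (or equal) dimension vectors, so no divisibility of $\gamma$ is forced.

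The actual role of primitivity in King's theorem is different, and it is the step your argument omits: it guarantees the existence of a universal family, i.e.\ that $M^{s}/\overline{G}$ \emph{represents} $\mM_{Q,I,\theta}(\gamma)$ rather than merely corepresenting it. Concretely, choose integers $n_v$ with $\sum_{v\in Q_0} n_v\gamma_v=1$; the character $(g_v)\mapsto \prod_v \det(g_v)^{n_v}$ has weight one on the diagonal $\mathbb{C}^{\ast}$, so twisting the tautological family on $M^{s}$ by the associated equivariant line bundle produces a $\overline{G}$-equivariant family that descends to the quotient and serves as the universal object (this is also why the $\mathbb{C}^{\ast}$-gerbe mentioned in the paper is trivial over the stable locus). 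Your descent argument only constructs the classifying morphism $T\to M^{s}/\overline{G}$ from a family; without the descended universal family you cannot produce the inverse natural transformation, so fine representability---the content of the theorem---is not established. Finally, since semistability need not coincide with stability, your projectivity argument as written only proves that $M^{ss}/\!/G$ is projective; the conclusion for the stable moduli space cannot be derived from the false equality $M^{ss}=M^{s}$ and must instead be handled as in King (or as used later in the paper, where $M_{[p,q]}$ is identified with a union of connected components of the projective quotient).
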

Indeed 
let $M^{s} \subset M^{ss}$ be
the open subset consisting of 
$\theta$-stable representations. 
For 
$w \in M^{s}$,  
the subgroup of $G$
which fixes $w$ coincides with the diagonal subgroup
$\mathbb{C}^{\ast} \subset G$. 
Hence the group $\overline{G} \cneq G/\mathbb{C}^{\ast}$ acts 
on $M^{s}$ without fixed points, and $M_{Q, I, \theta}(\gamma)$ is given by
\begin{align}\label{moduli:Q}
M_{Q, I, \theta}(\gamma)=M^{s}/\overline{G}. 
\end{align}
In particular, 
the open 
substack of
$\mathfrak{M}_{Q, I, \theta}(\gamma)$
consisting of stable representations
is a $\mathbb{C}^{\ast}$-gerbe over (\ref{moduli:Q}).
By the construction, we have the closed embedding
\begin{align*}
M_{Q, I, \theta}(\gamma)
\hookrightarrow 
M_{Q, \theta}(\gamma)
\end{align*}
such that $M_{Q, I, \theta}(\gamma)$
is defined by the 
ideal of relations in $I$
on the smooth moduli space
$M_{Q, \theta}(\gamma)$. 
Since $M_{Q, I, \theta}(\gamma)$ 
represents the functor (\ref{funct:M}), 
there exists a 
universal $(Q, I)$-representation,
i.e. 
a family of representations of $(Q, I)$
\begin{align}\label{univ}
\vV=(\{\vV_v\}_{v\in Q_0}, \{\phi_a\}_{a\in Q_1})
\end{align}
over $M_{Q, I, \theta}(\gamma)$ 
such that the map $f \mapsto f^{\ast}\vV$
 gives the functorial isomorphism
\begin{align*}
\Hom(T, M_{Q, I, \theta}(\gamma)) \stackrel{\cong}{\to}
 \mM_{Q, I, \theta}(\gamma)(T)
\end{align*}
for any $\mathbb{C}$-scheme $T$.

\subsection{Construction of NC hull}
Let $\gamma \in \Gamma_{Q}$ be a primitive element. 
We consider the smooth moduli space 
$M_{Q, \theta}(\gamma)$
of representations of $Q$ without relation 
(cf.~Theorem~\ref{thm:Kin}), 
and a universal representation 
$\vV$ on it
given by (\ref{univ}) for $I=\{0\}$. 
We take an affine open subset
\begin{align*}
U \subset M_{Q, \theta}(\gamma)
\end{align*}
such that 
each $\vV_v|_{U}$ is isomorphic to 
$\oO_{U} \otimes_{\mathbb{C}} W_v$, 
where $W_v$ is  
a $\mathbb{C}$-vector space with 
dimension $\gamma_v$. 
Since
$U$ is smooth, 
it admits an
NC smooth thickening $U^{\n}=(U, \oO_{U}^{\rm{nc}})$, 
 which 
is unique up to non-canonical isomorphisms (cf.~\cite[Theorem~1.6.1]{Kap17}). 
We set
\begin{align*}
\vV^{\rm{nc}}_{U, v} \cneq \oO_{U}^{\rm{nc}} \otimes_{\mathbb{C}} W_v, \
v\in Q_0
\end{align*}
which we regard as left $\oO_{U}^{\n}$-modules. 
Since $\oO_{U}^{\n} \twoheadrightarrow \oO_{U}$ is surjective, 
we can lift each universal 
morphism $\phi_a \colon \vV_{t(a)} \to \vV_{h(a)}$
restricted to $U$ to a left
$\oO_{U}^{\n}$-module homomorphism
\begin{align}\label{data}
\phi_{U, a}^{\n} \colon \vV_{U, t(a)}^{\n} \to \vV_{U, h(a)}^{\n}, \
a \in Q_1.
\end{align}
Then the data
\begin{align}\label{repU}
\vV_{U}^{\n} \cneq (\{\vV_{U, v}^{\n}\}_{v\in Q_0}, 
\{\phi_{U, a}^{\n}\}_{a\in Q_1})
\end{align}
is a representation of $Q$ over $U^{\n}$. 

We 
define the functor
\begin{align}\label{fun:h}
h_{Q, \theta}(\gamma) \colon \nN \to \sS et
\end{align}
by sending  
$\Lambda \in \nN$
to the isomorphism classes of 
triples
$(f, \wW, \psi)$:
\begin{itemize}
\item $f$ is a morphism 
$\Spec \Lambda^{ab} \to M_{Q, \theta}(\gamma)$
of schemes. 
\item $\wW$ is a flat representation of $Q$ over 
$\Lambda$. 
\item $\psi$ is an isomorphism 
$\psi \colon \wW^{ab}
\stackrel{\cong}{\to}f^{\ast}\vV$
as representations of $Q$ over 
$\Lambda^{ab}$. 
\end{itemize}
An isomorphism 
$(f, \wW, \psi) \to (f', \wW', \psi')$ 
exists if $f=f'$, and 
there is an isomorphism $\wW \to \wW'$ 
as representations of $Q$ over $\Lambda$ 
commuting $\psi$, $\psi'$.
Note that we have
\begin{align}\label{note:hQ}
h_{Q, \theta}(\gamma)|_{\cC om}=h_{M_{Q, \theta}(\gamma)}
\end{align}
\begin{prop}\label{prop:natural}
The natural transformation
\begin{align}\label{hUd}
h_{U^{\n}} \to h_{Q, \theta}(\gamma)|_{U}
\end{align}
sending $g \colon \Spf \Lambda \to U^{\n}$
to $(g^{ab}, g^{\ast} \vV_U^{\n}, \id)$ 
is an NC hull of $h_{Q, \theta}(\gamma)|_{U}$.   
\end{prop}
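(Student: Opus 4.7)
The plan is to apply Proposition~\ref{prop:hull} to the natural transformation $\phi \colon h_{U^{\n}} \to h_{Q,\theta}(\gamma)|_U$. Since $U^{\n}$ is NC smooth (being an NC smooth thickening of the affine smooth scheme $U$), it remains to verify that (a) $\phi$ is an isomorphism on $\cC om$, and (b) the functor $h \cneq h_{Q,\theta}(\gamma)|_U$ satisfies the surjectivity and bijectivity conditions of Proposition~\ref{prop:hull} on the map~(\ref{h:natu}).

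For (a), equation~(\ref{note:hQ}) identifies $h|_{\cC om}$ with $h_U$ by sending $(f,\wW,\psi)$ to $f$, since for commutative $R$ the datum of a flat $Q$-representation over $R$ together with an isomorphism $\psi$ to $f^{\ast}\vV$ is determined up to unique isomorphism by $f$ alone. Under the trivialization $\vV_v|_U \cong \oO_U \otimes_{\mathbb{C}} W_v$ used to define $\vV_U^{\n}$, the rigidification $\psi = \id$ becomes canonical, so $\phi$ restricted to $\cC om$ is just the identity $h_U \to h_U$.

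For condition (1) of Proposition~\ref{prop:hull}, suppose $\Lambda_1 = \Lambda_2 = \Lambda'$ with $p_1 = p_2 \colon \Lambda' \to \Lambda$ a central extension, and $\Lambda_{12} = \Lambda' \times_{\Lambda} \Lambda'$. An element of $h(\Lambda') \times_{h(\Lambda)} h(\Lambda')$ consists of two triples $(f,\wW,\psi)$ and $(f,\wW',\psi')$ together with an isomorphism $\sigma \colon \wW|_{\Lambda} \stackrel{\cong}{\to} \wW'|_{\Lambda}$ compatible with the $\psi$'s on abelizations. Form the fibered $Q$-representation $\wW_{12,v} \cneq \wW_v \times_{\sigma_v} \wW'_v$ over $\Lambda_{12}$ with the induced morphisms $\phi_{12,a}$; flatness is preserved because the extension is central and each $\wW_v$, $\wW'_v$ is locally free. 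This produces a preimage in $h(\Lambda_{12})$ and establishes surjectivity.

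For condition (2), assume $\Lambda$ is commutative and $\Lambda_2 = \Lambda \oplus J$ is a trivial extension. Surjectivity is by the same fibered product construction. The main obstacle is injectivity: two elements of $h(\Lambda_{12})$ whose images in $h(\Lambda_1) \times_{h(\Lambda)} h(\Lambda_2)$ are identified come equipped with an isomorphism over each factor, and one must show these assemble canonically. Here Lemma~\ref{lem:I}(ii) is the key tool: any automorphism of a flat $Q$-representation over $\Lambda_{12}$ which is the identity after reduction to $\Lambda_{12}^{ab}$ is left multiplication by a central unit $1+u$, and the rigidification $\psi$ pins down the abelization. Combined with the section $\Lambda \to \Lambda \oplus J$ of the trivial extension, this forces the partial automorphisms on the two factors to originate from a single central unit of $\Lambda_{12}$, yielding the required unique isomorphism over $\Lambda_{12}$. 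Once both conditions are verified, Proposition~\ref{prop:hull} concludes that $\phi$ is formally smooth, i.e., an NC hull.
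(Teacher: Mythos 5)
You follow the same route as the paper: reduce to the hypotheses of Proposition~\ref{prop:hull}, use (\ref{note:hQ}) to get the isomorphism on $\cC om$, and produce preimages under the map (\ref{h:natu}) by patching the two representations along a chosen isomorphism over $\Lambda$. So the core construction matches. Two points in your verification need repair, however. First, the flatness of the patched representation $\wW_{12}$ over $\Lambda_{12}=\Lambda_1\times_{\Lambda}\Lambda_2$ has nothing to do with the extension being central: the relevant fact is Milnor's patching theorem (\cite[Theorem~2.1]{Mil}, which the paper cites), namely that patching projective modules along an isomorphism over $\Lambda$ gives a projective $\Lambda_{12}$-module, the hypothesis being surjectivity of $p_1, p_2$. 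You should also complete the preimage to an actual element of $h(\Lambda_{12})$ by constructing $f_{12}$ via $\Lambda_{12}^{ab}\cong \Lambda_1^{ab}\times_{\Lambda^{ab}}\Lambda_2^{ab}$ and setting $\psi_{12}=\psi_1\times\psi_2$, as the paper does.

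Second, your injectivity argument for condition (2) misapplies Lemma~\ref{lem:I}(ii): the quotient $\Lambda_{12}\twoheadrightarrow\Lambda_{12}^{ab}$ is not a central extension, so an automorphism which is the identity on the abelianization need not be left multiplication by a central unit $1+u$. The correct, and simpler, observation is that since $\Lambda$ is commutative one has $\wW^{ab}=\wW$ over $\Lambda$, so an isomorphism over $\Lambda$ compatible with the rigidifications is unique (it is forced to be $\psi'^{-1}\circ \psi$); hence the two given isomorphisms over $\Lambda_1$ and over $\Lambda_2$ restrict to the same isomorphism over $\Lambda$ and patch, again through the Milnor square, to an isomorphism over $\Lambda_{12}$ compatible with $\psi_{12}$, $\psi_{12}'$. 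This is precisely the uniqueness-of-$\gamma$ argument the paper deploys later in the proof of Proposition~\ref{prop:naturald}. Note that the paper's own proof of Proposition~\ref{prop:natural} only verifies surjectivity of (\ref{h12}) for arbitrary surjections $p_1,p_2$ and leaves the bijectivity required in condition (2) of Proposition~\ref{prop:hull} implicit (compare Remark~\ref{rmk:aut}), so your attention to injectivity is welcome; it just needs the argument above rather than Lemma~\ref{lem:I}(ii).
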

\begin{proof}
We write $h=h_{Q, \theta}(\gamma)|_{U}$ for simplicity. 
Since $h_{U^{\n}}|_{\cC om}=h_{U}$, 
the natural transformation (\ref{hUd})
is an isomorphism
on $\cC om$
by (\ref{note:hQ}). 
Therefore by 
Proposition~\ref{prop:hull},  
it is enough to show the following: 
for surjections
$p_1 \colon \Lambda_1 \twoheadrightarrow \Lambda$, 
$p_2 \colon \Lambda_2 \twoheadrightarrow \Lambda$  
in $\nN$, 
$\Lambda_{12} \cneq \Lambda_1 \times_{\Lambda} \Lambda_2$, 
the natural 
map
\begin{align}\label{h12}
h(\Lambda_{12})
\to h(\Lambda_1) \times_{h(\Lambda)} h(\Lambda_2)
\end{align}
is surjective. 
The RHS of (\ref{h12}) consists of 
triples 
\begin{align}\label{RHS}
(f_j, \wW_j, \psi_j) \in h(\Lambda_j), \ 
j=1, 2, 
\end{align}
which are 
isomorphic over $\Lambda$, 
i.e. $f\cneq f_1|_{\Spec \Lambda^{ab}}=f_2|_{\Spec \Lambda^{ab}}$
and
there is an isomorphism 
of representations of $Q$ over $\Lambda$
\begin{align}\label{isom:h}
\gamma \colon \Lambda \otimes_{\Lambda_1}
 \wW_1 \stackrel{\cong}{\to} \Lambda \otimes_{\Lambda_2} \wW_2
\end{align}
which commutes with
$\Lambda^{ab} \otimes_{\Lambda_j^{ab}}\psi_j$
for $j=1, 2$. 
 Let us write 
$\wW_j$ as 
\begin{align*}
\wW_j=(\{W_{j, v}\}_{v\in Q_0}, \{\phi_{j, a}\}_{a \in Q_1})
\end{align*}
as representations of $Q$ over $\Lambda_j$. 
Then the isomorphism $\gamma$ consists of collections of 
isomorphisms
of left $\Lambda$-modules
\begin{align*}
\gamma_{v} \colon \Lambda \otimes_{\Lambda_1}
W_{1, v} \stackrel{\cong}{\to} \Lambda \otimes_{\Lambda_2}
W_{2, v}
\end{align*} 
for each $v \in Q_1$
which commute with $\Lambda \otimes_{\Lambda_{j}} \phi_{j, a}$.  
We set
\begin{align*}
W_{12, v} \cneq
\{ (x, y) \in W_{1, v} \times W_{2, v} : 
\gamma_v \circ (1 \otimes x)=1 \otimes y\}. 
\end{align*}
Then $W_{12, v}$
is a projective left $\Lambda_{12}$-module 
by~\cite[Theorem~2.1]{Mil}.
Therefore the data
\begin{align}\label{Mil}
\wW_{12} \cneq 
\left(\{W_{12, v}\}_{v\in Q_0}, 
\{\phi_{1, a} \times \phi_{2, a} \}_{a\in Q_1} \right)
\end{align} 
determines a flat
representation of $Q$ over 
$\Lambda_{12}$.  
Also 
since $\Lambda_{12}^{ab}=\Lambda_1^{ab} \times_{\Lambda^{ab}}
\Lambda_2^{ab}$, 
the morphisms $f_1$, $f_2$
and $f$
induce the 
morphism of schemes 
\begin{align*}
f_{12} \cneq f_1 \times_f f_2 \colon 
\Spec \Lambda_{12}^{ab} \to U.
\end{align*}
Finally
since $\gamma$ commutes with 
$\Lambda^{ab} \otimes_{\Lambda_j^{ab}}\psi_j$, 
we have the isomorphism
\begin{align*}
\psi_{12} \cneq 
\psi_1 \times \psi_2 \colon 
\wW_{12}^{ab} \stackrel{\cong}{\to}
f_{12}^{\ast}\vV 
\end{align*}
of representations of $Q$ over $\Lambda_{12}^{ab}$. 
Therefore the triple
$(f_{12}, \wW_{12}, \psi_{12})$ determines an element of 
the LHS of (\ref{h12}), 
which is mapped to the triples (\ref{RHS})
by the map (\ref{h12}). 
Therefore the map (\ref{h12}) is surjective. 
\end{proof}
\begin{rmk}\label{rmk:aut}
Note that the
resulting element
 $(f_{12}, \wW_{12}, \psi_{12}) \in h(\Lambda_{12})$
may also depend on a choice of $\gamma$. 
If it really depends on $\gamma$, the 
map (\ref{h12})
is not bijective, and (\ref{hUd}) 
is not isomorphic. 
In order to show the independence of $\gamma$, 
one needs to show 
that any automorphism
of $\Lambda \otimes_{\Lambda_j} \wW_j$
extends to that of $\wW_j$, 
which might not be true if
$\Lambda_j$ is non-commutative. 
A similar issue
occurs in the proof of~\cite[Proposition~5.4.3]{Kap17}, 
which caused its gap
as pointed out in~\cite[Remark~4.1.4]{PoTu}. 
\end{rmk}

\begin{rmk}
A priori, the representation 
$\vV_U^{\n}$ depends on
the choices of lifting (\ref{data}).
However Proposition~\ref{prop:natural}
implies that
different choices of lifting (\ref{data})
yield isomorphic representations
after pulling back by some 
automorphism of $U^{\n}$. 
See Corollary~\ref{Vij}.  
\end{rmk}

We next consider the moduli 
space $M_{Q, I, \theta}(\gamma)$ 
in Theorem~\ref{thm:Kin} 
for 
non-zero relation 
$I \subset \mathbb{C}[Q]$. 
We embed it into the smooth 
moduli space 
\begin{align*}
M_{Q, I, \theta}(\gamma) \subset 
M_{Q, \theta}(\gamma)
\end{align*} 
and take an affine open subset 
$U \subset M_{Q, \theta}(\gamma)$
as before. 
Let $U^{\n}$ be an NC
smooth thickening of
$U$ and 
$\vV_U^{\n}$ 
a lift of a universal representation
(\ref{repU})
to $U^{\n}$. 
The representation $\vV_U^{\n}$ 
together with the relation $I$
determine the 
two sided ideal 
$\jJ_{I, U} \subset \oO_U^{\n}$ 
of relations in $I$ 
(cf.~Definition~\ref{defi:ideal}). 
We set  
\begin{align}\label{Vthick}
V \cneq M_{Q, I, \theta}(\gamma) \cap U, \ 
\oO_{V}^{\n} \cneq \oO_U^{\n}/\jJ_{I, U}, \ 
V^{\n} \cneq (V, \oO_V^{\n}).
\end{align}
Since $(\oO_V^{\n})^{ab}=\oO_V$, 
the affine NC scheme
$V^{\n}$ is a closed NC subscheme of $U^{\n}$. 
The restriction 
$\vV_V^{\n} \cneq \vV_U^{\n}|_{V^{\n}}$
is a representation of $(Q, I)$ over
$V^{\n}$. 

We also define the functor
\begin{align}\label{h:Nset}
h_{Q, I, \theta}(\gamma) \colon \nN \to \sS et
\end{align}
to be the sub functor of $h_{Q, \theta}(\gamma)$
in (\ref{fun:h}), 
sending $\Lambda \in \nN$ to the 
set of triples 
$(f, \wW, \psi) \in h_{Q, \theta}(\gamma)(\Lambda)$
such that
$f$ factors through 
$M_{Q, \theta, I}(\gamma)$
and $\wW$ is a representation of $(Q, I)$
over $\Lambda$.  
Note that we have
\begin{align*}
h_{Q, I, \theta}(\gamma)|_{\cC om}=h_{M_{Q, I, \theta}(\gamma)}.
\end{align*}
\begin{prop}\label{prop:natural2}
The natural transformation
\begin{align}\label{hUd2}
h_{V^{\n}} \to h_{Q, I, \theta}(\gamma)|_{V}
\end{align}
sending $g \colon \Spf \Lambda \to V^{\n}$
to $(g^{ab}, g^{\ast} \vV_V^{\n}, \id)$ 
is an NC hull of $h_{Q, I, \theta}(\gamma)|_{V}$.   
\end{prop}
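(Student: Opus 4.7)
The plan is to reduce the statement to Proposition~\ref{prop:natural} by exploiting the closed embedding $V^{\n} \hookrightarrow U^{\n}$ together with the corresponding inclusion of functors $h_{Q, I, \theta}(\gamma)|_{V} \hookrightarrow h_{Q, \theta}(\gamma)|_{U}$. The crucial observation I would first establish is the following: for any NC complete algebra $\Lambda$ and any morphism $g \colon \Spf \Lambda \to U^{\n}$, the pulled-back representation $g^{\ast}\vV_U^{\n}$ is a representation of $(Q, I)$ over $\Lambda$ if and only if $g$ factors through the closed NC subscheme $V^{\n}$. This follows from the functoriality of Definition~\ref{defi:ideal}: the ideal of relations in $g^{\ast}\vV_U^{\n}$ is precisely the image of $\jJ_{I, U}$ under the induced map of structure sheaves, and hence vanishes exactly when $g^{\sharp}$ factors through $\oO_{U}^{\n}/\jJ_{I, U} = \oO_V^{\n}$.

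Given this, checking that (\ref{hUd2}) is an isomorphism on $\cC om$ reduces to the identification $h_{Q, I, \theta}(\gamma)|_{V}|_{\cC om} = h_V$ from Theorem~\ref{thm:Kin}, combined with $h_{V^{\n}}|_{\cC om} = h_V$, which holds because $(\oO_V^{\n})^{ab} = \oO_V$. For the formal smoothness, I would take a central extension $0 \to J \to \Lambda' \to \Lambda \to 0$ in $\nN$ together with an element of
\begin{align*}
h_{Q, I, \theta}(\gamma)|_{V}(\Lambda') \times_{h_{Q, I, \theta}(\gamma)|_{V}(\Lambda)} h_{V^{\n}}(\Lambda),
\end{align*}
namely a triple $(f', \wW', \psi') \in h_{Q, I, \theta}(\gamma)|_{V}(\Lambda')$ and a morphism $g \colon \Spf \Lambda \to V^{\n}$ compatible over $\Lambda$. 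Regarding $g$ as a morphism to $U^{\n}$ and $(f', \wW', \psi')$ as an element of $h_{Q, \theta}(\gamma)|_{U}(\Lambda')$, the formal smoothness of $h_{U^{\n}} \to h_{Q, \theta}(\gamma)|_{U}$ provided by Proposition~\ref{prop:natural} produces a lift $\tilde{g} \colon \Spf \Lambda' \to U^{\n}$. Since $\tilde{g}^{\ast}\vV_U^{\n} \cong \wW'$ as representations of $Q$ and $\wW'$ satisfies the relations in $I$, the key observation above forces $\tilde{g}$ to factor through $V^{\n}$, yielding the required lift in $h_{V^{\n}}(\Lambda')$.

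The main technical point will be verifying the functoriality of the ideal of relations with respect to base change used in the key observation. Since $\vV_U^{\n}$ is flat, one can express $\jJ_{I, U}$ locally as the two-sided ideal generated by matrix entries of the maps in (\ref{nmor}), and these matrix entries pull back correctly under $\tilde{g}^{\sharp}$. The isomorphism $\tilde{g}^{\ast}\vV_U^{\n} \cong \wW'$ identifies the corresponding ideals precisely because both are two-sided, so ambiguity arising from conjugation by an invertible change of basis is absorbed. Once this point is cleanly set up, the rest of the argument is a formal transfer of the NC hull property from $U^{\n}$ to its closed NC subscheme $V^{\n}$, with no new input beyond Proposition~\ref{prop:natural}.
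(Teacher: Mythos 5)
Your proposal is correct and follows essentially the same route as the paper: the paper's proof consists precisely of observing that the square relating $h_{U^{\n}} \to h_{Q,\theta}(\gamma)|_U$ and $h_{V^{\n}} \to h_{Q,I,\theta}(\gamma)|_V$ is Cartesian, which it verifies by the same equivalence you call the ``key observation'' (vanishing of the pulled-back ideal of relations $\jJ_{I,U}$ is equivalent to $g^{\sharp}$ factoring through $\oO_U^{\n}/\jJ_{I,U}=\oO_V^{\n}$), and then invokes Proposition~\ref{prop:natural}. You merely make explicit the lifting argument that the paper leaves implicit in ``it is enough to note that the diagram is Cartesian.''
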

\begin{proof}
By Proposition~\ref{prop:natural}, it 
is enough to note that the 
following diagram
of functors
\begin{align*}
\xymatrix{
h_{U^{\n}} \ar[r] & h_{Q, \theta}(\gamma)|_{U} \\
h_{V^{\n}} \ar[r] \ar@{^{(}->}[u]
 & h_{Q, I, \theta}(\gamma)|_{V} \ar@{^{(}->}[u]
}
\end{align*}
is Cartesian. 
Indeed for 
$g \colon \Spf \Lambda \to U^{\n}$, 
suppose that 
$g^{\ast}\vV_{U}^{\n}$ 
is a representation of $(Q, I)$ over $\Lambda$. 
This is equivalent to that the 
natural morphism
\begin{align*}
(\Lambda \otimes_{\oO_{U}^{\n}}\vV_{U}^{\n}) \otimes_{\mathbb{C}} I
 \otimes_{\mathbb{C}} \Hom_{\Lambda}(\Lambda \otimes_{\oO_{U}^{\n}}\vV_{U}^{\n}, \Lambda)
\to \Lambda
\end{align*}
is a zero map, 
which is equivalent to that 
\begin{align*}
\Lambda \otimes_{\oO_U^{\n}}
 \jJ_{I, U} \otimes_{\oO_U^{\n}} \Lambda \to \Lambda
\end{align*}
induced by $\jJ_{I, U} \subset \oO_U^{\n}$ is 
a zero map. 
The last condition 
is also equivalent to 
that $g^{\ast} \colon \oO_U^{\n} \to \Lambda$
vanishes on $\jJ_{I, U}$, hence 
$g$ factors through $V^{\n} \hookrightarrow U^{\n}$. 
\end{proof}
Let $\{U_i\}_{i\in \mathbb{I}}$ 
be a sufficiently small 
affine open covering of $M_{Q, \theta}(\gamma)$, 
and set $V_i=M_{Q, I, \theta}(\gamma) \cap U_i$. 
By applying the above construction 
to $U=U_i$, 
we
obtain the NC schemes
\begin{align}\label{UiVi}
U_i^{\n} \cneq (U_i, \oO_{U_i}^{\n}), \ 
V_i^{\n} \cneq (V_i, \oO_{V_i}^{\n})
\end{align} 
and 
the representations
$\vV_{U_i}^{\n}$
of $Q$
over $U_i^{\n}$, 
$\vV_{V_i}^{\n} \cneq \vV_{U_i}^{\n}|_{V_i}$ 
of $(Q, I)$
over 
$V_i^{\n}$
respectively. 
We have the following corollary: 
\begin{cor}\label{cor:NCV}
There exist isomorphisms
of NC schemes
\begin{align}\label{Vij}
\phi_{ij} \colon 
V_{j}^{\n}|_{V_{ij}} \stackrel{\cong}{\to}
V_i^{\n}|_{V_{ij}}
\end{align}
giving a quasi NC structure on 
$M_{Q, I, \theta}(\gamma)$, 
and isomorphisms 
of representations of $(Q, I)$: 
\begin{align}\label{isom:g}
g_{ij} \colon 
\phi_{ij}^{\ast}\vV_{V_i}^{\n}|_{V_{ij}} \stackrel{\cong}{\to}
\vV_{V_j}^{\n}|_{V_{ij}}.
\end{align} 
\end{cor}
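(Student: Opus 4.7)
The plan is to derive Corollary~\ref{cor:NCV} as a direct consequence of Proposition~\ref{prop:natural2} together with the uniqueness of affine NC hulls established in Lemma~\ref{NCh}. First I would apply Proposition~\ref{prop:natural2} to the open subsets $U_i$ and $U_j$ individually. Restricting to $V_{ij} = V_i \cap V_j$, both natural transformations
\begin{align*}
h_{V_i^{\n}|_{V_{ij}}} \to h_{Q, I, \theta}(\gamma)|_{V_{ij}}, \qquad
h_{V_j^{\n}|_{V_{ij}}} \to h_{Q, I, \theta}(\gamma)|_{V_{ij}}
\end{align*}
exhibit affine NC hulls of the same functor $h_{Q, I, \theta}(\gamma)|_{V_{ij}}$ on $\nN$.

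Next, I would mimic the proof of Lemma~\ref{NCh}: the formal smoothness of each NC hull gives surjections of the form (\ref{limh}), hence mutually compatible algebra homomorphisms $u \colon \oO_{V_i}^{\n}|_{V_{ij}} \to \oO_{V_j}^{\n}|_{V_{ij}}$ and $v$ in the opposite direction. Since both NC hulls restrict to $h_{V_{ij}}$ on $\cC om$ (via Proposition~\ref{prop:natural2} and the identity $h_{Q, I, \theta}(\gamma)|_{\cC om} = h_{M_{Q, I, \theta}(\gamma)}$), the compositions $u \circ v$ and $v \circ u$ are identity on abelizations, so by Lemma~\ref{lem:id} they are isomorphisms. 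Setting $\phi_{ij} \cneq \Spf(v)$ then produces the desired isomorphism (\ref{Vij}), and the same compatibility with $\cC om$ forces $\phi_{ij}^{ab} = \id$, which is precisely the condition required by Definition~\ref{def:QNC} for a quasi NC structure. Together with the local NC structures $V_i^{\n}$ from (\ref{UiVi}), this realizes $M_{Q, I, \theta}(\gamma)$ as a quasi NC scheme, exactly in the manner of Corollary~\ref{cor:sit}.

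For the representation-level isomorphism $g_{ij}$ in (\ref{isom:g}), I would apply the natural transformation (\ref{hUd2}) to the identity morphism $\mathrm{id} \colon \Spf \oO_{V_j}^{\n}|_{V_{ij}} \to V_j^{\n}|_{V_{ij}}$, obtaining the distinguished triple $(\mathrm{id}^{ab}, \vV_{V_j}^{\n}|_{V_{ij}}, \id)$ in $h_{Q, I, \theta}(\gamma)(\oO_{V_j}^{\n}|_{V_{ij}})$. The fact that $u$ (equivalently $\phi_{ij}$) was chosen to commute with the NC hull transformations means this triple is isomorphic, in $h_{Q, I, \theta}(\gamma)(\oO_{V_j}^{\n}|_{V_{ij}})$, to the image $(\phi_{ij}^{ab}, \phi_{ij}^{\ast}\vV_{V_i}^{\n}|_{V_{ij}}, \id)$ of the analogous identity on $V_i^{\n}|_{V_{ij}}$. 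Unwinding the definition of isomorphism in $h_{Q, I, \theta}(\gamma)$ extracts an isomorphism $g_{ij}$ of $(Q, I)$-representations with the required source and target.

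The main technical obstacle in this plan is essentially encoded in Remark~\ref{rmk:aut}: the gluing isomorphisms $\phi_{ij}$ are not canonical, because the NC hull $h_{V_i^{\n}} \to h_{Q, I, \theta}(\gamma)|_{V_i}$ fails to be an isomorphism on all of $\nN$ (only on $\cC om$), stemming from the non-commutative automorphism ambiguity of flat families $\wW$ restricted to $\Lambda \otimes_{\Lambda_j} \wW_j$. This non-canonicity is precisely why one cannot expect the cocycle condition $\phi_{ij} \circ \phi_{jk} \circ \phi_{ki} = \id$ to hold and why the conclusion can only assert a \emph{quasi} NC structure rather than a genuine NC structure. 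No further work is required beyond carefully invoking the formalism already developed: all the heavy lifting sits in Proposition~\ref{prop:natural2} and Lemma~\ref{NCh}.
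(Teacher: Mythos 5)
Your proposal is correct and follows essentially the same route as the paper: the paper's proof simply cites Proposition~\ref{prop:natural2} together with Corollary~\ref{cor:sit} (which itself rests on the uniqueness of affine NC hulls, Lemma~\ref{NCh}) for the existence of the $\phi_{ij}$, and obtains $g_{ij}$ by noting that $\phi_{ij}$ can be chosen to commute with the natural transforms $h_{V_i^{\n}} \to h_{Q, I, \theta}(\gamma)|_{V_i}$. Your unwinding of Lemma~\ref{NCh} on the overlaps, and your extraction of $g_{ij}$ from the compatibility of the chosen homomorphisms with the hull transformations, is exactly the intended argument, and your closing remark about the source of non-canonicity matches Remark~\ref{rmk:aut}.
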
 
\begin{proof}
The existence of isomorphisms
(\ref{Vij}) follow from
Proposition~\ref{prop:natural2} and  
Corollary~\ref{cor:sit}. 
Since one can choose $\phi_{ij}$ 
commuting with
natural transforms
 $h_{V_i^{\n}} \to h_{Q, I, \theta}(\gamma)|_{V_i}$
in Proposition~\ref{prop:natural2}, 
we 
also
have isomorphisms
(\ref{isom:g}).
\end{proof}

\subsection{Comparison with the formal deformations of quiver representations}
\label{subsec:compare}
Here we use the notation in Subsection~\ref{subsec:pro}. 
For an object $W \in \mathrm{Rep}(Q, I)$, 
the formal non-commutative deformation 
functor 
\begin{align}\label{Def}
\mathrm{Def}_{W}^{\n} \colon 
\nN^{\rm{loc}} \to \sS et
\end{align}
is defined by sending $(\Lambda, {\bf n})$
to the set of isomorphism classes 
$(\wW, \psi)$, where $\wW$
is a flat representation of $(Q, I)$ over 
$\Lambda$, 
and 
$\psi \colon \Lambda/{\bf n} \otimes_{\Lambda} \wW 
\stackrel{\cong}{\to} W$ 
is an isomorphism 
as representations of $(Q, I)$. 
The non-commutative deformation 
functor (\ref{Def}) 
was studied by Laudal~\cite{Lau}, 
where the existence of a pro-representable hull
is proved. 
We also refer to~\cite{Erik}, \cite{ESe}, \cite{ELO}, \cite{ELO2}, \cite{ELO3}
for details on formal non-commutative deformation theory. 

Recall that  
the formal commutative 
deformation space of $W$
is given by
the solution of the
Mauer-Cartan equation 
of the dg-algebra
$\dR \Hom(W, W)$, up
to gauge equivalence. 
Let 
\begin{align}\label{Ainf}
(\Ext^{\ast}(W, W), \{m_n\}_{n\ge 2})
\end{align}
be the minimal $A_{\infty}$-algebra
which is $A_{\infty}$ quasi-isomorphic to 
$\dR \Hom(W, W)$. 
Here we take the
$\Ext$-groups in the category $\mathrm{Rep}(Q, I)$. 
By~\cite{ESe},
the pro-representable hull of 
(\ref{Def}) 
is described in terms of the $A_{\infty}$-algebra (\ref{Ainf}). 
Let
\begin{align*}
m_n \colon \Ext^1(W, W)^{\otimes n} \to \Ext^2(W, W)
\end{align*}
be the $n$-th $A_{\infty}$-product. 
Below for a vector space $V$, 
we denote by $\widehat{T}^{\bullet}(V)$ the
completed tensor algebra given by
\begin{align*}
\widehat{T}^{\bullet}(V)=\prod_{n\ge 0} V^{\otimes n}. 
\end{align*}
Let  
\begin{align*}
J_W \subset \widehat{T}^{\bullet}(\Ext^1(W, W)^{\vee})
\end{align*}
be the topological closure of 
the two sided ideal of the 
completed tensor algebra 
of $\Ext^1(W, W)^{\vee}$
generated by the image of the map
\begin{align*}
\sum_{n\ge 2} m_n^{\vee} 
\colon \Ext^2(W, W)^{\vee} \to \widehat{T}^{\bullet}(\Ext^1(W, W)^{\vee}). 
\end{align*}
The pro-representable hull of (\ref{Def}) is given 
by the quotient algebra
\begin{align*}
R_{W}^{\n} \cneq 
\widehat{T}^{\bullet}(\Ext^1(W, W)^{\vee})/J_W.  
\end{align*}

Let us take an open neighborhood 
$[W] \in V \subset M_{Q, I, \theta}(\gamma)$, 
and a non-commutative thickening 
$V^{\n}=(V, \oO_V^{\n})$
as in the previous subsection (\ref{Vthick}). 
\begin{lem}\label{lem:complete}
The completion 
$\widehat{\oO}_{V, [W]}^{\n}$ is 
isomorphic to $R_{W}^{\n}$.  
\end{lem}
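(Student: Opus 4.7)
The plan is to identify both $\widehat{\oO}_{V,[W]}^{\n}$ and $R_W^{\n}$ as pro-representable hulls of the same non-commutative local deformation functor at $[W]$, and then invoke uniqueness.

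First I would combine Proposition~\ref{prop:natural2} with Lemma~\ref{lem:prohull}. Since $h_{V^{\n}} \to h_{Q, I, \theta}(\gamma)|_V$ is an NC hull, the localization at the closed point $[W] \in V$ yields a pro-representable hull
\[
h_{\widehat{\oO}_{V, [W]}^{\n}} \to h_{[W]}^{\mathrm{loc}}
\]
of the local functor $h_{[W]}^{\mathrm{loc}} \colon \nN^{\mathrm{loc}} \to \sS et$ sending $(\Lambda, \mathbf{n})$ to those triples $(f, \wW, \psi) \in h_{Q, I, \theta}(\gamma)(\Lambda)$ whose closed point maps to $[W]$, and this natural transformation is an isomorphism on $\aA rt^{\mathrm{loc}}$.

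Next I would construct a natural isomorphism $h_{[W]}^{\mathrm{loc}} \cong \mathrm{Def}_W^{\n}$ on $\nN^{\mathrm{loc}}$. In one direction, given $(f, \wW, \psi)$ with $f$ landing at $[W]$ modulo $\mathbf{n}^{ab}$, the reduction of $\psi$ composed with the canonical identification $f^{\ast}\vV|_{\Spec \mathbb{C}} \cong W$ induces an isomorphism $\Lambda/\mathbf{n} \otimes_{\Lambda} \wW \stackrel{\cong}{\to} W$, yielding an element of $\mathrm{Def}_W^{\n}(\Lambda)$. In the reverse direction, starting from $(\wW, \psi) \in \mathrm{Def}_W^{\n}(\Lambda)$, the abelianization $\wW^{ab}$ is a flat family of $\theta$-stable $(Q, I)$-representations over the local Artinian algebra $\Lambda^{ab}$ with closed fiber $W$; since $M_{Q, I, \theta}(\gamma)$ represents $\mM_{Q, I, \theta}(\gamma)$ (Theorem~\ref{thm:Kin}) and all line bundles on $\Spec \Lambda^{ab}$ are trivial, this produces a morphism $f \colon \Spec \Lambda^{ab} \to V$ together with a rigidifying isomorphism $\wW^{ab} \stackrel{\cong}{\to} f^{\ast}\vV$. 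These two constructions are mutually inverse on isomorphism classes because the residual automorphism group $\Aut(W) = \mathbb{C}^{\ast}$ acts compatibly on the rigidifications on both sides.

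Finally, by Segal's computation~\cite{ESe} recalled just above the statement of the lemma, $R_W^{\n}$ is also a pro-representable hull of $\mathrm{Def}_W^{\n}$. The uniqueness of pro-representable hulls up to non-canonical isomorphism, proved by the same argument as in Lemma~\ref{NCh}, then gives the required isomorphism $\widehat{\oO}_{V, [W]}^{\n} \cong R_W^{\n}$.

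I expect the main technical obstacle to be the middle step: comparing the two different rigidifications, namely the one by an isomorphism with $f^{\ast}\vV$ over the whole $\Lambda^{ab}$ appearing in $h_{[W]}^{\mathrm{loc}}$ versus the one by an isomorphism with $W$ only at the closed fiber appearing in $\mathrm{Def}_W^{\n}$, and showing that the resulting equivalence relations coincide. Once the $\mathbb{C}^{\ast}$-ambiguity coming from $\Aut(W)$ is tracked carefully, the rest is a formal consequence of the uniqueness of pro-representable hulls.
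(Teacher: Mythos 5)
Your overall skeleton (exhibit $\widehat{\oO}_{V,[W]}^{\n}$ and $R_W^{\n}$ as pro-representable hulls of the same functor and invoke uniqueness) is the same as the paper's. The gap is in your middle step: you claim the natural transform $h^{\mathrm{loc}}_{[W]}\to \mathrm{Def}_W^{\n}$ is an \emph{isomorphism} on all of $\nN^{\mathrm{loc}}$, and your justification for injectivity on isomorphism classes is wrong. Given $(f,\wW,\psi)$ and $(f,\wW',\psi')$ with the same image in $\mathrm{Def}_W^{\n}(\Lambda)$, the witnessing isomorphism $g\colon\wW\to\wW'$ only matches the rigidifications at the closed fibre; the discrepancy $\psi'\circ g^{ab}\circ\psi^{-1}$ is an automorphism of $f^{\ast}\vV$, which by Lemma~\ref{lem:I}~(i) is multiplication by an arbitrary unit of $\Lambda^{ab}$ congruent to $1$ modulo the maximal ideal --- not an element of $\Aut(W)=\mathbb{C}^{\ast}$ as you assert. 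To correct $g$ you would have to lift that unit of $\Lambda^{ab}$ to an automorphism of $\wW'$ over the non-commutative $\Lambda$, and the possible failure of such liftings is exactly the phenomenon isolated in Remark~\ref{rmk:aut} as the obstruction to gluing. So the isomorphism claim is unsupported (and is stronger than anything the paper proves).

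The statement survives because an isomorphism is not needed: for the uniqueness argument it suffices that $h^{\mathrm{loc}}_{[W]}\to\mathrm{Def}_W^{\n}$ be formally smooth and bijective on $\mathbb{C}[t]/t^2$, which is what the paper verifies. Bijectivity on all of $\aA rt^{\mathrm{loc}}$ follows from representability of the commutative moduli functor (here your lifting problem disappears since the base is commutative), and formal smoothness is checked directly: given a surjection $\Lambda'\twoheadrightarrow\Lambda$ in $\nN^{\mathrm{loc}}$ and an extension $\wW'$ of $\wW$, one extends $f$ to $f'$ by pro-representability of $\mathrm{Def}_W$ and then uses Lemma~\ref{lem:I}~(i) over the \emph{commutative} base $\Lambda'^{ab}$ to adjust $\psi'$ so that it restricts to $\psi$. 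Your proposal omits any verification of formal smoothness, so even after discarding the false isomorphism claim the argument is incomplete; you should add this smoothness check.
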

\begin{proof} 
Let
\begin{align*}
h^{\rm{loc}}_{Q, I, \theta}(\gamma)_{[W]}
\colon \nN^{\rm{loc}} 
\to \sS et
\end{align*}
be the functor constructed from 
$h_{Q, I, \theta}(\gamma)$
by (\ref{loc:funct}), 
which has a pro-representable 
hull $h_{\widehat{\oO}_{V, [W]}^{\n}} \to 
h^{\rm{loc}}_{Q, I, \theta}(\gamma)_{[W]}$
by Lemma~\ref{lem:prohull}. 
We construct the natural transform
\begin{align}\label{isom:funct}
h^{\rm{loc}}_{Q, I, \theta}(\gamma)_{[W]} \to
\mathrm{Def}_{W}^{\n}
\end{align}
by sending 
triples $(f, \wW, \psi)$ of the $(\Lambda, {\bf n})$-point of 
the LHS of (\ref{isom:funct}) to 
the pair $(\wW, \Lambda/{\bf n} \otimes_{\Lambda} \psi)$
of the RHS. 
By 
the uniqueness of a pro-representable hull, 
it is enough to 
show that (\ref{isom:funct}) is formally smooth 
and bijective on $\mathbb{C}[t]/t^2$.
Note that since 
the functor (\ref{funct:M}) is 
represented by $M_{Q, I, \theta}(\gamma)$, 
the functor 
\begin{align*}
\mathrm{Def}_W \cneq 
\mathrm{Def}_{W}^{\n}|_{\aA rt^{\rm{loc}}}
\end{align*} 
is pro-represented by $\widehat{\oO}_{V, [W]}$. 
Therefore by Lemma~\ref{lem:prohull}, 
the natural transform 
(\ref{isom:funct}) is an isomorphism 
on $\aA rt^{\rm{loc}}$, 
and in particular bijective on $\mathbb{C}[t]/t^2$. 
It remains to
show that (\ref{isom:funct}) is formally smooth. 

Let $(f, \wW, \psi)$ be a 
$(\Lambda, {\bf n})$-point of 
the LHS of (\ref{isom:funct}), 
and suppose that 
$\wW$ extends to a flat
representation $\wW'$ of $(Q, I)$ over 
$\Lambda'$, where 
$\Lambda' \twoheadrightarrow \Lambda$ is a surjection in 
$\nN^{\rm{loc}}$. 
Then $\wW^{' ab}$ 
is a flat extension of $\wW^{ab}$
to $\Lambda^{'ab}$.  
By the pro-representability of $\mathrm{Def}_{W}$, 
the morphism $f \colon \Spec \Lambda^{ab} \to V$
 uniquely extends to 
a morphism of schemes 
$f' \colon \Spec \Lambda^{' ab} \to V$, 
such that there
 exists 
an isomorphism 
$\psi' \colon \wW^{' ab} \stackrel{\cong}{\to} f^{'\ast}\vV$. 
By Lemma~\ref{lem:I} (i), 
any automorphism of $f^{\ast}\vV$
extends to an automorphism of $f^{'\ast}\vV$, 
hence one can choose $\psi'$ so that 
$\Lambda^{ab} \otimes_{\Lambda^{'ab}}\psi'=\psi$
holds. 
Then the triple 
$(f', \wW', \psi')$ is an extension of 
$(f, \wW, \psi)$
to $\Lambda'$, 
showing that (\ref{isom:funct}) is formally smooth. 
\end{proof}
By Corollary~\ref{cor:NCV} together with
the above lemma, we obtain the following result:
\begin{thm}
There exists 
a quasi NC structure $\{(V_i, \oO_{V_i}^{\n})\}_{i\in \mathbb{I}}$
on $M_{Q, I, \theta}(\gamma)$
such that for any $[W] \in V_i$, 
there is an isomorphism of algebras 
$\widehat{\oO}_{V_i, [W]}^{\n} \cong R_W^{\n}$. 
\end{thm}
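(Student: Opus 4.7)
The plan is to assemble the theorem directly from the two preceding results of the subsection, so I expect the argument to be short and essentially formal. First, I would invoke Corollary~\ref{cor:NCV} to produce the quasi NC structure $\{(V_i, \oO_{V_i}^{\n})\}_{i\in \mathbb{I}}$ itself: choose a sufficiently fine affine open cover $\{U_i\}_{i\in\mathbb{I}}$ of the ambient smooth moduli space $M_{Q,\theta}(\gamma)$, apply Kapranov's theorem to get NC smooth thickenings $U_i^{\n}$ together with lifts $\vV_{U_i}^{\n}$ of the universal representation, and set $\oO_{V_i}^{\n} = \oO_{U_i}^{\n}/\jJ_{I,U_i}$ as in (\ref{Vthick}). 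The gluing data $\phi_{ij}$ on $V_{ij}$ is then produced by combining Proposition~\ref{prop:natural2}, which shows that each $h_{V_i^{\n}} \to h_{Q,I,\theta}(\gamma)|_{V_i}$ is an NC hull, with the uniqueness of affine NC hulls (Lemma~\ref{NCh}), in exactly the pattern codified by Corollary~\ref{cor:sit}.

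Next, I would fix a closed point $[W] \in V_i$ and verify the isomorphism $\widehat{\oO}_{V_i,[W]}^{\n} \cong R_W^{\n}$. Here I would quote Lemma~\ref{lem:complete} verbatim: that lemma constructs a natural transformation from the pointwise local functor $h^{\mathrm{loc}}_{Q,I,\theta}(\gamma)_{[W]}$ to the non-commutative deformation functor $\mathrm{Def}_W^{\n}$, checks formal smoothness using Lemma~\ref{lem:I}(i) to lift isomorphisms of $f^{\ast}\vV$, and checks bijectivity on $\mathbb{C}[t]/t^2$ using the commutative pro-representability. Uniqueness of pro-representable hulls up to non-canonical isomorphism then yields $\widehat{\oO}_{V_i,[W]}^{\n} \cong R_W^{\n}$ where the right-hand side is the Laudal--Segal pro-representing hull $\widehat{T}^{\bullet}(\Ext^1(W,W)^{\vee})/J_W$.

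The only genuine obstacle is in the preparatory results, not in this assembly step, so I will not need to perform any additional construction. The delicate ingredient is the Milnor patching (\ref{Mil}) hidden inside Proposition~\ref{prop:natural}, which is what makes the NC hull property of $h_{V_i^{\n}}$ work over non-commutative bases, together with the fact that the Remark~\ref{rmk:aut} obstruction does not damage formal smoothness (only the cocycle condition for the $\phi_{ij}$). Since formal smoothness, rather than strict representability, is all that is needed both for the gluing (up to the quasi NC weakening) and for the identification with $R_W^{\n}$ on completions, the theorem falls out cleanly as a corollary of Corollary~\ref{cor:NCV} and Lemma~\ref{lem:complete}.
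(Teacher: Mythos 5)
Your proposal is correct and follows exactly the paper's route: the theorem is obtained by combining Corollary~\ref{cor:NCV} (which supplies the quasi NC structure $\{V_i^{\n}\}$ via the NC hulls of Proposition~\ref{prop:natural2} and the uniqueness of affine NC hulls) with Lemma~\ref{lem:complete} (which identifies the completion at $[W]$ with $R_W^{\n}$ via uniqueness of pro-representable hulls). The paper states the theorem as an immediate consequence of precisely these two results, so no further commentary is needed.
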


\subsection{Partial NC thickening of moduli spaces of representations of quivers}\label{subsec:partial}
It is not clear whether the quasi NC structure 
in Corollary~\ref{cor:NCV}
glue together to 
give an NC structure. 
Here we discuss 
the possibility to 
extend a given $(d-1)$-th order
NC thickening to 
that of the $d$-th order
NC thickening. 
Let $U_i^{\n}$, $V_i^{\n}$ be
affine NC schemes 
given in (\ref{UiVi}), 
and 
take representations 
$\vV_{U_i}^{\n}$, $\vV_{V_i}^{\n}$
of $(Q, I)$ as before.  
For $d \in \mathbb{Z}_{\ge 0}$, 
we set
\begin{align*}
U_i^{d} \cneq (U_i^{\n})^{\le d}, \ 
V_i^{d} \cneq (V_i^{\n})^{\le d}, \ 
\vV_{U_i}^{d} \cneq (\vV_{U_i}^{\n})^{\le d}, \ 
\vV_{V_i}^{d} \cneq (\vV_{V_i}^{\n})^{\le d}.
\end{align*}
The isomorphisms (\ref{Vij}), (\ref{isom:g})
in Corollary~\ref{cor:NCV}
induce
isomorphisms 
\begin{align*}
\phi_{ij}^{\le d} \colon 
V_j^{d}|_{V_{ij}} \stackrel{\cong}{\to} V_{i}^{d}|_{V_{ij}}, \
g_{ij}^{\le d} \colon 
\phi_{ij}^{\le d \ast}\vV_{V_j}^{d}|_{V_{ij}}
\stackrel{\cong}{\to}
\vV_{V_i}^{d}|_{V_{ij}}
\end{align*}
where $\phi_{ij}^{\le d}$
give
a quasi NC structure on $M_{Q, I, \theta}(\gamma)$, 
and 
$g_{ij}^{\le d}$ are isomorphisms of 
representations of $(Q, I)$. 
We put the following assumption: 
\begin{assum}
\label{assum:assum}
\begin{enumerate}
\item By replacing $\phi_{ij}^{\le d-1}$ if necessary, 
the quasi NC structure 
$\{V_i^{d-1}\}_{i \in \mathbb{I}}$ 
determines an NC structure
$M_{Q, I, \theta}^{d-1}(\gamma)$
on $M_{Q, I, \theta}(\gamma)$. 
\item By replacing 
$g_{ij}^{\le d-1}$
if necessary, 
the objects 
$\{\vV_{V_i}^{d-1}\}_{i\in \mathbb{I}}$
glue 
to give a representation $\vV^{d-1}$ of $(Q, I)$ over 
$M_{Q, I, \theta}^{d-1}(\gamma)$. 
\end{enumerate}
\end{assum}
We define
the functor
\begin{align*}
h_{Q, I, \theta}^{d}(\gamma)
\colon \nN_d \to \sS et
\end{align*}
by sending $\Lambda \in \nN_d$ to the 
isomorphism classes of triples 
$(f, \wW, \psi)$: 
\begin{itemize}
\item $f$ is a morphism of NC schemes 
$f\colon \Spf \Lambda^{\le d-1} \to M_{Q, I, \theta}^{d-1}(\gamma)$. 
\item $\wW$ is a flat representation of $(Q, I)$ over $\Lambda$. 
\item $\psi \colon \wW^{\le d-1} \stackrel{\cong}{\to}
f^{\ast}\vV^{d-1}$
is an isomorphism of 
representations of $(Q, I)$ over $\Lambda^{\le d-1}$. 
\end{itemize}
An isomorphism 
$(f, \wW, \psi) \to (f', \wW', \psi')$ 
exists if $f=f'$, and 
there is an isomorphism $\wW \to \wW'$ 
as representations of $(Q, I)$ over $\Lambda$ 
commuting with $\psi$, $\psi'$.
\begin{prop}\label{prop:naturald}
The natural transform
\begin{align}\label{nat3}
h_{V_i^{d}} \to h_{Q, I, \theta}^{d}(\gamma)|_{V_i}
\end{align}
sending 
$g \colon \Spf \Lambda \to V_i^{d}$
for $\Lambda \in \nN_d$ to 
$(g^{\le d-1}, g^{\ast}\vV_{V_i}^{d}, \id)$
is an isomorphism of functors. 
\end{prop}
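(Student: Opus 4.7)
Since $V_i^d$ need not be $d$-smooth (the scheme $V_i$ may be singular), I cannot invoke Proposition~\ref{prop:hull2} directly with $Y=V_i^d$. My plan is to leverage the $d$-smooth ambient $U_i^d$ and deduce the proposition by a Cartesian argument along the closed embedding $V_i^d \hookrightarrow U_i^d$.

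First, I would introduce an auxiliary functor for the free quiver, $h_{Q,\theta}^{d, U_i}(\gamma)\colon \nN_d \to \sS et$, sending $\Lambda$ to isomorphism classes of triples $(f, \wW, \psi)$ with $f\colon \Spf \Lambda^{\le d-1} \to U_i^{d-1}$, a flat $Q$-representation $\wW$ over $\Lambda$, and $\psi\colon \wW^{\le d-1} \stackrel{\cong}{\to} f^{\ast}\vV_{U_i}^{d-1}$, and show that the natural transform
\[
h_{U_i^d} \to h_{Q,\theta}^{d, U_i}(\gamma),\qquad g \mapsto (g^{\le d-1},\, g^{\ast}\vV_{U_i}^d,\, \id),
\]
is an isomorphism. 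This is the $d$-truncated analog of Proposition~\ref{prop:natural}. I would prove it by applying Proposition~\ref{prop:hull2} with $Y=U_i^d$ (which is $d$-smooth as a truncation of the NC smooth thickening $U_i^{\n}$) and $e = d-1$: the isomorphism on $\nN_{d-1}$ is automatic since both sides compute $\Hom(\Spf \Lambda, U_i^{d-1})$ for $\Lambda \in \nN_{d-1}$, while the bijectivity conditions (1) and (2) reduce to a refinement of the Milnor patching argument used in the proof of Proposition~\ref{prop:natural}.

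Second, to deduce the statement for $V_i^d$, I would verify that the square
\[
\xymatrix{
h_{V_i^d} \ar[r] \ar@{^{(}->}[d] & h_{Q, I, \theta}^{d}(\gamma)|_{V_i} \ar@{^{(}->}[d] \\
h_{U_i^d} \ar[r] & h_{Q, \theta}^{d, U_i}(\gamma)
}
\]
is Cartesian. For a morphism $g\colon \Spf \Lambda \to U_i^d$, the pull-back $g^{\ast}\vV_{U_i}^d$ is a representation of $(Q, I)$ over $\Lambda$ precisely when $g^{\ast}$ vanishes on the image of $\jJ_{I, U_i}$ in $\oO_{U_i}^d$, which by the construction (\ref{Vthick}) is exactly the condition that $g$ factors through $V_i^d$. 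Combined with the isomorphism of step~1 and the inclusion of $(Q, I)$-representations into $Q$-representations, this gives (\ref{nat3}) as an isomorphism.

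The main obstacle is the upgrade from surjectivity to bijectivity in the Milnor patching argument of step~1. In the proof of Proposition~\ref{prop:natural} the construction of $\wW_{12}$ depended on a choice of gluing isomorphism $\gamma$, an ambiguity which over a non-commutative base need not be resolved by extending automorphisms, as highlighted in Remark~\ref{rmk:aut}. In the present $d$-truncated setting this ambiguity is removed because the identification $\psi$ pins down the $(d-1)$-th order data: any competing $\gamma$ differs from the given one by an automorphism of $\wW_j|_{\Lambda}$ that is the identity on the $(d-1)$-th truncation, and such automorphisms are controlled by Lemma~\ref{lem:I}(ii) applied to the natural central extensions in $\nN_d$, allowing them to be lifted to $\wW_j$ and absorbed into an isomorphism of the patched triples.
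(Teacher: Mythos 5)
Your proposal is correct and follows essentially the same route as the paper: reduce to the ambient $d$-smooth thickening $U_i^{d}$ via the Cartesian square determined by the ideal of relations, and apply Proposition~\ref{prop:hull2} with $e=d-1$, upgrading the Milnor patching argument of Proposition~\ref{prop:natural} to a bijection. The only divergence is in the final uniqueness step: the paper simply notes that the relevant base $\Lambda$ lies in $\nN_{d-1}$, so $\Lambda\otimes_{\Lambda_j}\wW_j$ coincides with its own $(d-1)$-th truncation and the gluing isomorphism is forced by compatibility with $\Lambda\otimes_{\Lambda_j}\psi_j$, making your detour through Lemma~\ref{lem:I}(ii) and the lifting of central automorphisms unnecessary (though not incorrect).
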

\begin{proof}
Note that (\ref{nat3}) is an isomorphism 
on $\nN_{<d}$. 
Similarly to the proof of Proposition~\ref{prop:natural2}, 
we have the Cartesian square
\begin{align}\label{Car2}
\xymatrix{
h_{U_i^{d}} \ar[r] & h_{Q, \{0\}, \theta}^{d}(\gamma)|_{U_i} \\
h_{V_i^{d}} \ar[r] \ar@{^{(}->}[u]
 & h_{Q, I, \theta}^{d}(\gamma)|_{V_i}. \ar@{^{(}->}[u]
}
\end{align} 
It is enough to show that 
the top arrow of (\ref{Car2}) is an isomorphism. 
We write $h^{d}=h_{Q, \{0\}, \theta}^{d}(\gamma)|_{U_i}$
for simplicity. 
By Proposition~\ref{prop:hull2}, it 
remains to show the following: 
for surjections $p_1 \colon \Lambda_1 \twoheadrightarrow \Lambda$, 
$p_2 \colon \Lambda_2 \twoheadrightarrow \Lambda$
in $\nN_{d}$ with $\Lambda \in \nN_{d-1}$, the natural map
\begin{align}\label{map:hd}
h^d(\Lambda_{12}) \to h^d(\Lambda_1) \times_{h^d(\Lambda)} h^d(\Lambda_2)
\end{align}
is bijective. 
We follow
the same notation and argument
in the proof of
Proposition~\ref{prop:natural}, 
replacing $h$ with $h^d$, and 
$\ast^{ab}$ with $\ast^{\le d-1}$.  
The difference from the proof of Proposition~\ref{prop:natural}
is that, since 
we have
\begin{align*}
\Lambda \otimes_{\Lambda_j}\wW_j=
\left( \Lambda \otimes_{\Lambda_j} \wW_j \right)^{\le d-1}
\end{align*}
and
the isomorphism $\gamma$ in (\ref{isom:h}) 
should commute with $\Lambda \otimes_{\Lambda_j} \psi_j$, 
$\gamma$
is uniquely determined
by the RHS of (\ref{map:hd}). 
Therefore sending the triples (\ref{RHS})
to $(f_{12}, \wW_{12}, \psi_{12})$
is
a well-defined map from the RHS to 
the LHS of (\ref{map:hd}), 
giving the inverse of (\ref{map:hd}). 
\end{proof}
By the above proposition together with 
Remark~\ref{rmk:NC}, we have the following corollary: 
\begin{cor}\label{cor:glue}
Under Assumption~\ref{assum:assum},
affine NC structures $\{V_i^{d}\}_{i\in \mathbb{I}}$ glue together 
to give an NC structure 
$M_{Q, I, \theta}^{d}(\gamma)$ 
on $M_{Q, I, \theta}(\gamma)$.  
\end{cor}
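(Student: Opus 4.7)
The plan is to combine Proposition~\ref{prop:naturald} with the principle recorded in Remark~\ref{rmk:NC}. The crucial upgrade provided by Proposition~\ref{prop:naturald} is that the natural transform $h_{V_i^d} \to h_{Q,I,\theta}^d(\gamma)|_{V_i}$ is an honest isomorphism of functors on $\nN_d$, not merely a formally smooth NC hull as in Corollary~\ref{cor:NCV}. Thus each local piece $V_i^d$ \emph{represents} the restricted functor $h_{Q,I,\theta}^d(\gamma)|_{V_i}$.

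Next I would exploit representability on overlaps. On a double overlap $V_{ij}$, both $V_i^d|_{V_{ij}}$ and $V_j^d|_{V_{ij}}$ represent the functor $h_{Q,I,\theta}^d(\gamma)|_{V_{ij}}$, so by the Yoneda lemma there is a \emph{canonical} transition isomorphism
\[
\phi_{ij}^{\le d}\colon V_j^d|_{V_{ij}} \stackrel{\cong}{\to} V_i^d|_{V_{ij}}.
\]
Applying the same argument on a triple overlap $V_{ijk}$, both $\phi_{ij}^{\le d}\circ\phi_{jk}^{\le d}$ and $\phi_{ik}^{\le d}$ are the unique isomorphism between two representing objects of $h_{Q,I,\theta}^d(\gamma)|_{V_{ijk}}$, so the cocycle condition holds automatically. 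This is precisely the mechanism articulated in Remark~\ref{rmk:NC}, and it glues the $\{V_i^d\}_{i\in\mathbb{I}}$ to a genuine NC scheme structure on $M_{Q,I,\theta}(\gamma)$.

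Then I would verify that the canonical $\phi_{ij}^{\le d}$ reduce modulo $F^d$ to the gluing data defining the NC structure $M_{Q,I,\theta}^{d-1}(\gamma)$ that was fixed in Assumption~\ref{assum:assum}. This is automatic from the very definition of $h_{Q,I,\theta}^d(\gamma)$: its data $(f,\wW,\psi)$ already involve a morphism to the glued $M_{Q,I,\theta}^{d-1}(\gamma)$ and an isomorphism with the glued representation $\vV^{d-1}$, so any representing object at level $d$ necessarily extends the prescribed lower-order structure, and the Yoneda-canonical gluing on overlaps specializes accordingly.

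The main obstacle I anticipate is essentially bookkeeping: one must track carefully that representability of the local functor $h_{Q,I,\theta}^d(\gamma)|_{V_i}$ by $V_i^d$ restricts well to the open subset $V_{ij}$, so that both $V_i^d|_{V_{ij}}$ and $V_j^d|_{V_{ij}}$ represent the same restricted functor. This is standard for representable functors on NC schemes and is already implicit in the statement of Proposition~\ref{prop:naturald}, so the argument closes with no further input beyond Assumption~\ref{assum:assum}.
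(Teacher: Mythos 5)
Your proposal is correct and follows essentially the same route as the paper: the paper's proof of this corollary is exactly the combination of Proposition~\ref{prop:naturald} (local representability of $h_{Q,I,\theta}^{d}(\gamma)$ by $V_i^{d}$) with the Yoneda-type uniqueness mechanism of Remark~\ref{rmk:NC}, which forces the transition isomorphisms to be canonical and hence to satisfy the cocycle condition. Your additional check that the canonical gluing reduces to the fixed $(d-1)$-th order structure is a sensible piece of bookkeeping already built into the definition of the functor, as you note.
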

By the above corollary, 
if the local $(Q, I)$ representations 
$\vV_{V_i}^{d}$
glue together to give a global $(Q, I)$
representation, then 
we can extend the $d$-th 
order NC structure $M_{Q, I, \theta}^d(\gamma)$
to a $(d+1)$-th order
NC thickening. 
The obstruction of
gluing $\vV_{V_i}^{d}$
is given as follows. We set
\begin{align*}
\jJ^d \cneq \Ker \left(\oO_{M_{Q, I, \theta}^{d}(\gamma)} \twoheadrightarrow 
\oO_{M_{Q, I, \theta}^{d-1}(\gamma)}\right).
\end{align*}
Note that $\jJ^d$ is a
coherent sheaf on $M_{Q, I, \theta}(\gamma)$. 
\begin{lem}\label{lem:obstr}
In the situation of Corollary~\ref{cor:glue},
there is a class
\begin{align}\label{glue:H2}
\mathrm{ob} \in 
H^2(M_{Q, I, \theta}(\gamma), \jJ^d)
\end{align}
such that $\mathrm{ob}=0$ if and only if 
$\{\vV_{V_i}^d\}_{i\in \mathbb{I}}$ glue to 
give a representation $\vV^d$ of $(Q, I)$
over $M_{Q, I, \theta}^{d}(\gamma)$.  
\end{lem}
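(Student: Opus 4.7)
The plan is a standard \v{C}ech-theoretic obstruction computation, in the spirit of gluing a sheaf from local data when the discrepancy of transition isomorphisms on triple overlaps is controlled by an abelian gerbe-type quantity.

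First I would fix the data as follows. By Assumption~\ref{assum:assum}(2) the lower-order representations $\vV_{V_i}^{d-1}$ are already glued via isomorphisms $g_{ij}^{\le d-1}$ which satisfy the cocycle condition on triple overlaps. Since $V_i^{d}$ is affine and $\oO_{V_i^{d}} \twoheadrightarrow \oO_{V_i^{d-1}}$ is surjective, we may lift each $g_{ij}^{\le d-1}$ to an isomorphism
\[
g_{ij}^{\le d} \colon \phi_{ij}^{\le d \ast}\vV_{V_i}^{d}|_{V_{ij}} \stackrel{\cong}{\to} \vV_{V_j}^{d}|_{V_{ij}},
\]
using the local freeness of the $\vV_{V_i}^d$. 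There is no reason these lifts satisfy the cocycle identity. On a triple overlap $V_{ijk}$ the composition
\[
\epsilon_{ijk} \cneq g_{ki}^{\le d}\circ \phi_{ki}^{\le d\ast}g_{jk}^{\le d} \circ (\phi_{jk}^{\le d}\circ\phi_{ki}^{\le d})^{\ast}g_{ij}^{\le d}
\]
is an automorphism of $\vV_{V_i}^{d}|_{V_{ijk}}$ that reduces to the identity modulo $\jJ^d$, because the $(d-1)$-th order cocycle holds.

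Second, I would identify such automorphisms with sections of $\jJ^d$. This is where the local argument of Lemma~\ref{lem:I}(ii) is applied: locally on $V_{ijk}$ the extension $\oO_{V_i^{d}} \twoheadrightarrow \oO_{V_i^{d-1}}$ is a central extension by $\jJ^d|_{V_{ijk}}$ (after restricting via $\phi_{ij}^{\le d}$'s, which are identity on abelianizations and hence identify the relevant modules compatibly). Since each stalk of $\vV_{V_i}^d$ is simple in the sense required by Lemma~\ref{lem:I}(i) (it is a free rank-one module over $\oO_{V_i}^{\n}$ tensored with $W_\bullet$), the automorphism $\epsilon_{ijk}$ must be given by multiplication by $1 + u_{ijk}$ for a unique section
\[
u_{ijk} \in \Gamma(V_{ijk}, \jJ^d).
\]
This produces a \v{C}ech $2$-cochain $u=(u_{ijk})$ with values in the coherent sheaf $\jJ^d$ on $M_{Q,I,\theta}(\gamma)$.

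Third, I would verify that $u$ is a \v{C}ech $2$-cocycle. The associativity of composition of $g$'s on quadruple overlaps $V_{ijk\ell}$ translates, using $(1+u)(1+u')=1+u+u'$ modulo $(\jJ^d)^2=0$ (which holds since $\jJ^d$ sits in the kernel of reduction to degree $d-1$ and the product lands in the kernel of reduction to degree $d$, which is zero at this order), into the identity $\delta u=0$. Replacing the chosen lifts $g_{ij}^{\le d}$ by $(1+v_{ij})\cdot g_{ij}^{\le d}$ for sections $v_{ij} \in \Gamma(V_{ij}, \jJ^d)$ changes $u$ by the \v{C}ech coboundary $\delta v$, again by the same linearization. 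Hence the class
\[
\mathrm{ob} \cneq [u] \in H^2(M_{Q, I, \theta}(\gamma), \jJ^d)
\]
is independent of the choice of lifts. By construction, $\mathrm{ob}=0$ if and only if one can choose $g_{ij}^{\le d}$ satisfying the cocycle condition, which is precisely the condition that $\{\vV_{V_i}^d\}_{i\in \mathbb{I}}$ glue to a global representation $\vV^d$ of $(Q,I)$ over $M_{Q,I,\theta}^{d}(\gamma)$.

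The main technical point to be careful about is the identification of lifts of the identity automorphism with sections of $\jJ^d$: this requires checking that the local statement of Lemma~\ref{lem:I}(ii) globalizes to the sheaf-theoretic situation, i.e.\ that on $V_{ijk}$ any automorphism of the flat $(Q,I)$-representation $\vV_{V_i}^d|_{V_{ijk}}$ lifting the identity on the $(d-1)$-th order representation is unambiguously given by left multiplication by $1+u$ for a section $u$ of the central ideal $\jJ^d$. This in turn uses the simplicity hypothesis $\Hom(\wW|_t,\wW|_t)=\mathbb{C}$ on fibers which holds for stable representations, and the freeness of $\vV_{V_i}^d$ over $\oO_{V_i}^{\n}$.
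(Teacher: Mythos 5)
Your proposal is correct and follows essentially the same route as the paper: the defect of the cocycle condition for the lifted transition isomorphisms on triple overlaps is an automorphism of $\vV_{V_i}^{d}|_{V_{ijk}}$ reducing to the identity at level $d-1$, which Lemma~\ref{lem:I}~(ii) identifies with multiplication by $1+u_{ijk}$ for $u_{ijk}\in \jJ^d|_{V_{ijk}}$, yielding the \v{C}ech $2$-cocycle defining $\mathrm{ob}$. The extra verifications you spell out (the cocycle identity for $u$, its change by a coboundary under re-choosing the lifts, and the affine/central-extension justification for applying Lemma~\ref{lem:I}~(ii) sheaf-theoretically) are exactly the standard steps the paper leaves implicit.
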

\begin{proof}
By Lemma~\ref{lem:I} (ii), 
the automorphism
$g_{ij}^{\le d} \circ g_{jk}^{\le d} \circ g_{ki}^{\le d}$
of $\vV_{V_i}^{d}|_{V_{ijk}}$
 is given  
by the multiplication 
of 
$1+u_{ijk}$
for some element
$u_{ijk} \in \jJ_d|_{V_{ijk}}$. 
Then $\{u_{ijk}\}_{ijk}$
is 
a Cech 2-cocycle of (\ref{glue:H2}), 
giving the desired obstruction class. 
\end{proof}
Since Assumption~\ref{assum:assum} is 
always satisfied for $d=1$, 
the affine NC structures $\{V_i^{1}\}_{i\in \mathbb{I}}$ 
glue together 
to give an NC structure 
$M_{Q, I, \theta}^{1}(\gamma)$ 
on $M_{Q, I, \theta}(\gamma)$.
Indeed,
one of
such a thickening 
is explicitly constructed in the following way. 
We first construct a 1-smooth NC structure 
$M_{Q, \theta}^1(\gamma)$
on 
the smooth moduli space $M_{Q, \theta}(\gamma)$
by 
\begin{align*}
\oO_{M_{Q, \theta}^1(\gamma)}
=\oO_{M_{Q, \theta}(\gamma)}
\oplus \Omega^2_{M_{Q, \theta}(\gamma)}.
\end{align*}
The multiplication is given 
\begin{align*}
(x, f) \cdot (y, g)=(xy, xg+fy+dx \wedge dy). 
\end{align*}
The universal 
representation $\vV_{U_i}^1$
may not glue together, but
the ideal of relations $\jJ_{I, U_i}^1 \subset \oO_{U_i}^{1}$
coincide on $U_{ij}$, 
hence determines the two sided 
ideal $\jJ_{I}^{1} \subset \oO_{M_{Q, \theta}^1(\gamma)}$. 
Then the NC scheme
\begin{align*}
M_{Q, I, \theta}^1(\gamma)
=\left(M_{Q, I, \theta}(\gamma), 
\oO_{M_{Q, \theta}^1}(\gamma)/\jJ_I^1 \right)
\end{align*}
is 
isomorphic to $V_i^1$
on $V_i$.

\subsection{NC structures on framed quiver representations}
As we observed in Remark~\ref{rmk:aut}, 
the issue 
for 
having the global NC structure
is caused by some automorphisms of sheaves
over non-commutative bases. 
One of the classical ways 
to kill automorphisms
of sheaves
is 
to add additional data called \textit{framing}.
Here we show that 
this classical idea also works for 
the construction problem of global NC structures.  
Let $(Q, I)$ be a quiver
with relation.  
We fix a vertex $\star \in Q_0$. 
\begin{defi}
A framed representation of $(Q, I)$
 over an NC scheme 
$Y$ 
is a pair $(\wW, \tau)$, 
where $\wW$ is 
a representation of $(Q, I)$ over $Y$
and  
$\tau \colon \oO_Y \to \wW_{\star}$
is
a morphism of coherent left $\oO_Y$-modules.   
\end{defi}
A framed representation $(\wW, \tau)$ over $Y$
is called \textit{flat} if $\wW$ is flat. 
Let $(\wW', \tau')$ be 
another framed 
representation of $(Q, I)$ 
over $Y$. 
An isomorphism from $(\wW, \tau)$ 
to $(\wW', \tau')$
is an isomorphism 
$g \colon \wW \stackrel{\cong}{\to} \wW'$
in $\mathrm{Rep}((Q, I)/Y)$
such that $g_{\star} \circ \tau=\tau'$. 
\begin{rmk}
For $\Lambda \in \nN$, 
giving 
a framed representation of $(Q, I)$
over $\Spf \Lambda$ is equivalent 
to 
giving 
a representation $W$
of $(Q, I)$ over 
$\Lambda$
together with 
a left $\Lambda$-module 
homomorphism $\tau \colon \Lambda \to W_{\star}$.
The data $(W, \tau)$ is called a
framed representation of $(Q, I)$ over $\Lambda$.  
If $\Lambda=\mathbb{C}$, we 
just call it 
a framed representation of $(Q, I)$. 
\end{rmk}

\begin{defi}
For $\theta \in \Gamma_{Q}$, 
a framed 
representation $(W, \tau)$ of $(Q, I)$
is called $\theta$-stable 
if $\theta \cdot \dim W=0$, 
$W$ is $\theta$-semistable 
and for any
subobject $0\neq W' \subset W$
in $\mathrm{Rep}(Q)$
with $\Imm \tau \in W'_{\star}$, 
we have $\theta \cdot \dim W'>0$. 
\end{defi}
For $\theta, \gamma \in \Gamma_{Q}$ with $\theta \cdot \gamma=0$,
the functor 
\begin{align*}
\mM_{Q, I, \theta}^{\star}(\gamma) \colon 
\sS ch/\mathbb{C} \to \sS et
\end{align*}
is defined 
by sending a $\mathbb{C}$-scheme $T$
to the set of isomorphism 
classes of 
flat 
framed representations $(\wW, \tau)$
over $T$ such that 
for any $t\in T$, 
the restriction $(\wW|_{t}, \tau|_{t})$
is a $\theta$-stable framed 
representation 
of $(Q, I)$
with $\dim \wW|_{t}=\gamma$. 
\begin{prop}\label{prop:fN}
The functor $\mM_{Q, I, \theta}^{\star}(\gamma)$ is represented 
by a 
 quasi-projective scheme
$M_{Q, I, \theta}^{\star}(\gamma)$, which 
is non-singular if $I=\{0\}$. 
\end{prop}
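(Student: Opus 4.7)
The strategy is to reduce the framed moduli problem to an ordinary quiver moduli problem via the standard framed-quiver construction, and then invoke Theorem~\ref{thm:Kin}. First I would introduce the framed quiver $\widetilde{Q}$ obtained from $Q$ by adjoining one new vertex $\infty$ together with a single new arrow $a_\star \colon \infty \to \star$, and let $\widetilde{I} \subset \mathbb{C}[\widetilde{Q}]$ be the two-sided ideal generated by $I$. Set $\widetilde{\gamma} = (\gamma, 1) \in \Gamma_{\widetilde{Q}}$; this is automatically primitive since its $\infty$-component equals $1$. A framed representation $(\wW, \tau)$ of $(Q, I)$ over $T$ with dimension vector $\gamma$ gives rise to a representation $\widetilde{\wW}$ of $(\widetilde{Q}, \widetilde{I})$ over $T$ of dimension vector $\widetilde{\gamma}$, together with a canonical trivialization $\widetilde{\wW}_\infty \cong \oO_T$, by placing $\oO_T$ at $\infty$ and letting $\tau$ play the role of the morphism along $a_\star$. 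This trivialization at $\infty$ is precisely what kills the line-bundle ambiguity in the equivalence relation defining the unframed moduli functor, so that strict isomorphism classes of framed representations correspond to equivalence classes of the associated $\widetilde{Q}$-representations.

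The next step is to choose an extension $\widetilde{\theta} \in \Gamma_{\widetilde{Q}}$ of $\theta$ with $\widetilde{\theta} \cdot \widetilde{\gamma} = 0$, so that $\widetilde{\theta}$-stability of $\widetilde{\wW}$ in the sense of King matches $\theta$-stability of $(\wW, \tau)$ in the sense of the definition above. Any proper subobject $\widetilde{W}' \subset \widetilde{W}$ in $\mathrm{Rep}(\widetilde{Q}, \widetilde{I})$ has $\widetilde{W}'_\infty$ equal to either $0$ or $\mathbb{C}$. In the first case $\widetilde{W}'$ corresponds to a subobject $W' \subset W$ in $\mathrm{Rep}(Q, I)$; in the second, minimality of such a $\widetilde{W}'$ forces the associated $W' \subset W$ to contain $\mathrm{Im}\, \tau$. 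Choosing $\widetilde{\theta}_\infty$ suitably, together with a compensating small perturbation of $\theta_v$, will translate the first case into the semistability inequality for $W$ and the second into the strict framed inequality, giving the desired matching of stability notions. I would arrange the perturbation so that $\widetilde{\theta}$-semistability and $\widetilde{\theta}$-stability coincide for dimension vector $\widetilde{\gamma}$.

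With these identifications, the functor $\mM_{Q, I, \theta}^\star(\gamma)$ is isomorphic to $\mM_{\widetilde{Q}, \widetilde{I}, \widetilde{\theta}}(\widetilde{\gamma})$, so applying Theorem~\ref{thm:Kin} to $(\widetilde{Q}, \widetilde{I}, \widetilde{\theta}, \widetilde{\gamma})$ produces the desired quasi-projective moduli scheme representing $\mM_{Q, I, \theta}^\star(\gamma)$. When $I = \{0\}$ we have $\widetilde{I} = \{0\}$ as well, and the same theorem furnishes the non-singularity of $M_{Q, \theta}^\star(\gamma)$.

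The main technical obstacle is the precise choice of $\widetilde{\theta}$ in the second step: one needs the two stability notions to coincide exactly, and moreover no strictly $\widetilde{\theta}$-semistable but non-stable representation of dimension $\widetilde{\gamma}$ should appear. This requires a careful perturbation argument and uses crucially that $\widetilde{\gamma}_\infty = 1$, so that the component of a subobject at $\infty$ is either trivial or equal to all of $\widetilde{W}_\infty$.
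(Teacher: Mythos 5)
Your proposal is correct and follows essentially the same route as the paper: the paper adjoins a vertex $\diamond$ with one arrow $\diamond \to \star$, sets $\gamma^{\diamond}=(1,\gamma)$, and uses the explicit perturbation $\theta^{\diamond}_{\varepsilon, v}=\theta_v+\varepsilon$ for $v\in Q_0$, $\theta^{\diamond}_{\varepsilon,\diamond}=-\varepsilon\sum_v \gamma_v$ with $0<\varepsilon\ll 1$, exactly the choice you identify as the remaining technical point, and then invokes Theorem~\ref{thm:Kin}.
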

\begin{proof}
Let $Q^{\diamond}$ be a quiver 
defined by adding one vertex $\diamond$
and one arrow $\diamond \to \star$
to the quiver $Q$. 
Note that the relation $I$ naturally 
determines the relation in $Q^{\diamond}$.
By the construction, giving
a framed representation of $(Q, I)$ is equivalent 
to giving a representation of 
$(Q^{\diamond}, I)$
whose dimension vector at the vertex $\diamond$ is one. 

For a
rational number $0<\varepsilon \ll 1$, 
let $\theta^{\diamond}_{\varepsilon}$ be an 
element of $\Gamma_{Q^{\diamond}}\otimes \mathbb{Q}$ 
given by 
\begin{align*}
\theta^{\diamond}_{\varepsilon, v}=\theta_{v}+\varepsilon
\mbox{ for } v\in Q_0, \  
\theta^{\diamond}_{\varepsilon, \diamond}=-\varepsilon
\sum_{v\in Q_0} \gamma_v.
\end{align*} 
Then $\theta_{\varepsilon}^{\diamond} \cdot \gamma^{\diamond}=0$, 
where
$\gamma^{\diamond}=(1, \gamma) \in \Gamma_{Q^{\diamond}}$
and $1$ is 
the dimension vector at the vertex $\diamond$.
Let $(\wW, \tau)$ be a flat 
framed representation of 
$(Q, I)$ over a $\mathbb{C}$-scheme $T$
such that $\dim \wW|_t=\gamma$ for any $t\in T$. 
Then we have the associated flat
representation 
$\wW^{\diamond}$
of $(Q^{\diamond}, I)$ over $T$
by putting $\wW_{\diamond}^{\diamond}=\oO_T$ 
and the morphism 
corresponding to $\diamond \to \star$
is $\tau$. 
It is easy to show that 
$(\wW, \tau)$ is a family of 
framed $\theta$-stable 
representations of $(Q, I)$
if and only if
$\wW^{\diamond}$ is 
a family of $\theta^{\diamond}$-stable
representations 
of $(Q^{\diamond}, I)$. 
Therefore $(\wW, \tau) \mapsto \wW^{\diamond}$
gives the natural transform
\begin{align}\label{nat}
\mM_{Q, I, \theta}^{\star}(\gamma) \to 
\mM_{Q^{\diamond}, I, \theta^{\diamond}_{\varepsilon}}(\gamma^{\diamond}). 
\end{align}
For a $T$-valued point $\wW^{\diamond}$
of the RHS of (\ref{nat}), 
it is equivalent to a unique element
$\wW^{'\diamond}$
such that $\wW^{'\diamond}_{\diamond}=\oO_T$, 
up to isomorphisms.  
Therefore the natural transform (\ref{nat}) is indeed
an isomorphism of functors, and the result follows from Theorem~\ref{thm:Kin}.   
\end{proof}
Let 
$(\vV, \iota)$ 
be the universal framed representation of $(Q, I)$ 
over
 $M_{Q, I, \theta}^{\star}(\gamma)$. 
We define the functor
\begin{align}\label{def:hQI}
h^{\star}_{Q, I, \theta}(\gamma) \colon \nN \to \sS et
\end{align}
by sending $\Lambda \in \nN$ to the 
isomorphism classes of triples 
$(f, (\wW, \tau), \psi)$: 
\begin{itemize}
\item $f$ is a morphism 
of schemes $\Spec \Lambda^{ab} \to M_{Q, I, \theta}^{\star}(\gamma)$. 
\item $(\wW, \tau)$ is a flat 
framed representation of $(Q, I)$
over $\Lambda$. 
\item $\psi$ is an isomorphism
$\psi \colon (\wW^{ab}, \tau^{ab}) \stackrel{\cong}{\to}
f^{\ast}(\vV, \iota)$ as framed representations of $(Q, I)$ 
over $\Lambda^{ab}$. 
\end{itemize}
We use the following lemma: 
\begin{lem}\label{lem:aut}
For $(f, (\wW, \tau), \psi) \in h_{Q, I, \theta}^{\star}(\gamma)(\Lambda)$, 
we have $\Aut(\wW, \tau)=\id$. 
\end{lem}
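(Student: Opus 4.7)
The plan is to reduce the framed automorphism problem to an unframed one via the $\diamond$-construction used in the proof of Proposition~\ref{prop:fN}, and then to proceed by induction on the NC filtration of $\Lambda$ using the two parts of Lemma~\ref{lem:I}.

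First, I would associate to the flat framed representation $(\wW,\tau)$ of $(Q,I)$ over $\Lambda$ the flat representation $\wW^{\diamond}$ of $(Q^{\diamond},I)$ over $\Lambda$ obtained by setting $\wW^{\diamond}_{\diamond}=\Lambda$ (as a free left $\Lambda$-module) and letting $\tau$ play the role of the morphism attached to the arrow $\diamond\to\star$. Giving an automorphism $g$ of $(\wW,\tau)$ is equivalent to giving an automorphism $g^{\diamond}$ of $\wW^{\diamond}$ in $\mathrm{Rep}((Q^{\diamond},I)/\Lambda)$ whose component at the vertex $\diamond$ is the identity of $\Lambda$. Moreover, by the proof of Proposition~\ref{prop:fN}, each fiber $\wW^{\diamond}|_{t}$ is a $\theta^{\diamond}_{\varepsilon}$-stable representation of $(Q^{\diamond},I)$, hence satisfies $\Hom(\wW^{\diamond}|_{t},\wW^{\diamond}|_{t})=\mathbb{C}$, so that $\wW^{\diamond}$ fulfills the hypotheses of Lemma~\ref{lem:I}.

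For the base case I would specialize to $\Lambda^{ab}$. Applying Lemma~\ref{lem:I}(i) to $\wW^{\diamond}|_{\Spec\Lambda^{ab}}$ yields a canonical isomorphism $\Hom_{\Lambda^{ab}}(\wW^{\diamond,ab},\wW^{\diamond,ab})\cong\Lambda^{ab}$ given by left multiplication, so $g^{\diamond,ab}$ is multiplication by some $\lambda\in\Lambda^{ab}$. Restricting to the $\diamond$-component $\wW^{\diamond,ab}_{\diamond}=\Lambda^{ab}$, the condition $g^{\diamond,ab}_{\diamond}=\id$ forces right multiplication by $\lambda$ on $\Lambda^{ab}$ to be identity, hence $\lambda=1$ and $g^{ab}=\id$.

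For the inductive step, assume $g^{\le d-1}=\id$ (equivalently $g^{\diamond,\le d-1}=\id$). The exact sequence
\begin{align*}
0\to F^d\Lambda/F^{d+1}\Lambda\to\Lambda^{\le d}\to\Lambda^{\le d-1}\to 0
\end{align*}
is a central extension in the sense of Subsection~\ref{subsec:NCnil}, so Lemma~\ref{lem:I}(ii) applies to $\wW^{\diamond,\le d}$ and describes $g^{\diamond,\le d}$ as left multiplication by a central element $1+u$ with $u\in F^d\Lambda/F^{d+1}\Lambda$. On the $\diamond$-component $\wW^{\diamond,\le d}_{\diamond}=\Lambda^{\le d}$, this automorphism is right multiplication by $1+u$; the constraint $g^{\diamond,\le d}_{\diamond}=\id$ then yields $(1+u)\cdot 1=1$, i.e.\ $u=0$, so $g^{\diamond,\le d}=\id$. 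Since $\Lambda\in\nN$ is NC nilpotent, this induction terminates after finitely many steps and gives $g^{\diamond}=\id$, hence $g=\id$.

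I do not expect any serious obstacle beyond bookkeeping: the main point is the translation between framed representations of $(Q,I)$ and representations of $(Q^{\diamond},I)$ with $\diamond$-component equal to the base algebra, which both converts the stability of $(\wW,\tau)$ into the stability of $\wW^{\diamond}$ (giving the scalar hypotheses needed for Lemma~\ref{lem:I}) and turns the framing condition into the clean requirement that $g^{\diamond}$ act trivially on the rank one free module $\Lambda^{\le d}$, at which point the central scalar $1+u$ is immediately pinned down to $1$.
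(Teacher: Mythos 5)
Your proof is correct and follows essentially the same route as the paper: pass to the auxiliary quiver $Q^{\diamond}$, use Lemma~\ref{lem:I}(i) for the commutative (abelianized) case and Lemma~\ref{lem:I}(ii) inductively along central extensions of the NC filtration, with the framing (the trivial $\diamond$-component) pinning the central scalar $1+u$ down to $1$. The only difference is cosmetic: the paper inducts on the NC nilpotence degree of $\Lambda$ rather than on the quotients $\Lambda^{\le d}$, which amounts to the same argument.
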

\begin{proof}
We prove the lemma
by the induction on the degree of the NC nilpotence of $\Lambda$.
First suppose that $\Lambda$ is commutative. 
We use the notation in the 
proof of Proposition~\ref{prop:fN}. 
Let $\wW^{\diamond}$ be the
$\Spec \Lambda$-valued point 
of  
$\mM_{Q^{\diamond}, I, \theta_{\varepsilon}^{\diamond}}(\gamma^{\diamond})$
corresponding to $(\wW, \tau)$ under 
the natural transform (\ref{nat}).  
Then by Lemma~\ref{lem:I} (i), 
any automorphism 
of $\wW^{\diamond}$ 
is given by a multiplication of 
an element in $\Lambda^{\ast}$. 
Therefore it commutes with $\tau$
if and only if it is identity, 
proving the lemma when $\Lambda$ is commutative.  
 
Next suppose that $\Lambda \in \nN_d$ 
and the lemma holds for $\Lambda^{\le d-1}$. 
Let $g$ be an automorphism of 
$(\wW, \tau)$. 
Then $g$ induces an automorphism of $\wW^{\diamond}$
which 
induces identity 
on
$(\wW^{\diamond})^{\le d-1}$.  
Hence by Lemma~\ref{lem:I} (ii), 
$g$ is a left multiplication by some central 
element in $\Lambda$. 
Since $g$ commutes with $\tau$, it follows that $g$ must be an identity. 
\end{proof}
Using the above lemma, 
we show the following result 
on the existence 
of global NC structures on the 
moduli spaces of stable framed representations. 
\begin{thm}\label{thm:framedQ}
The framed moduli space $M_{Q, I, \theta}^{\star}(\gamma)$
has an NC structure 
which represents 
the functor
$h^{\star}_{Q, I, \theta}(\gamma)$. 
\end{thm}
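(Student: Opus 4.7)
The plan is to transfer the NC hull construction for quiver moduli to the framed setting and then exploit the rigidity Lemma~\ref{lem:aut} to guarantee that the local NC structures actually represent the functor. First I identify $M \cneq M^{\star}_{Q, I, \theta}(\gamma)$ via Proposition~\ref{prop:fN} with the unframed moduli $M_{Q^{\diamond}, I, \theta^{\diamond}_{\varepsilon}}(\gamma^{\diamond})$, which sits inside the smooth scheme $M_{Q^{\diamond}, \theta^{\diamond}_{\varepsilon}}(\gamma^{\diamond})$. Applying the constructions of Section~\ref{sec:NC2} to the pair $(Q^{\diamond}, I)$ produces an affine open cover $\{V_i\}_{i\in \mathbb{I}}$ of $M$ equipped with affine NC structures $V_i^{\n}=(V_i, \oO_{V_i}^{\n})$ and lifted universal $(Q^{\diamond}, I)$-representations; since the $\diamond$-component is a rank-one locally free sheaf that trivialises on each affine piece $V_i$, these yield framed $(Q, I)$-representations $(\vV_{V_i}^{\n}, \iota_{V_i}^{\n})$ on $V_i^{\n}$. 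The assignment $g \mapsto (g^{ab}, g^{\ast}(\vV_{V_i}^{\n}, \iota_{V_i}^{\n}), \id)$ then defines a natural transform $h_{V_i^{\n}} \to h^{\star}_{Q, I, \theta}(\gamma)|_{V_i}$ which is an NC hull by the same fibre-product argument as in the proof of Proposition~\ref{prop:natural2}.

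The central step is to upgrade each of these NC hulls to an honest isomorphism of functors, and I would do this by induction on the degree of NC nilpotence via Proposition~\ref{prop:hull2}. At the base ($e=0$) the natural transform is an isomorphism on $\cC om$ since $h^{\star}_{Q, I, \theta}(\gamma)|_{\cC om}=h_M$ by Proposition~\ref{prop:fN}. For the inductive step I need bijectivity of the map
\begin{align*}
h(\Lambda_{12}) \to h(\Lambda_1) \times_{h(\Lambda)} h(\Lambda_2), \quad h \cneq h^{\star}_{Q, I, \theta}(\gamma)|_{V_i},
\end{align*}
for central extensions. Surjectivity adapts the Milnor patching argument in the proof of Proposition~\ref{prop:natural}: given compatible triples $(f_j, (\wW_j, \tau_j), \psi_j)$ with a gluing isomorphism $\gamma$ intertwining the framings, the fibre product $W_{12, v}$ inherits a framing $\tau_{12} \colon \Lambda_{12} \to W_{12, \star}$ sending $1$ to $(\tau_1(1), \tau_2(1))$, which lies in $W_{12, \star}$ precisely by the framing compatibility of $\gamma$. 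Injectivity is the new ingredient and the main obstacle: the ambiguity flagged in Remark~\ref{rmk:aut} stemmed from the freedom to choose $\gamma$, but in the framed setting Lemma~\ref{lem:aut} forces any framing-compatible intertwiner to be unique, so the Milnor patching is canonical and the map is bijective.

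Once each $h_{V_i^{\n}} \to h^{\star}_{Q, I, \theta}(\gamma)|_{V_i}$ is an isomorphism of functors, Remark~\ref{rmk:NC} ensures that the canonical gluing isomorphisms over the overlaps $V_{ij}$ satisfy the cocycle condition, so the local NC structures assemble into a global NC structure $M^{\star, \n}_{Q, I, \theta}(\gamma)$. By construction this NC scheme represents the functor $h^{\star}_{Q, I, \theta}(\gamma)$. The technical heart of the argument is thus the combination of Lemma~\ref{lem:aut} (triviality of framed automorphisms) with the inductive criterion of Proposition~\ref{prop:hull2}, which together rigidify the Milnor patching data uniformly across all orders of NC nilpotence, precisely fixing the gap that prevented gluing in the unframed case.
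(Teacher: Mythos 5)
Your proposal is correct and takes essentially the same route as the paper: lift the universal framed representation to an NC smooth thickening of the relation-free ambient moduli space, use the Milnor patching of Proposition~\ref{prop:natural} for surjectivity of the fibre-product map, invoke Lemma~\ref{lem:aut} to make the patching datum $\gamma$ unique (hence the map bijective and the NC hull an isomorphism of functors via Proposition~\ref{prop:hull2}), and conclude gluing from Remark~\ref{rmk:NC}. The only organizational nuance is that Proposition~\ref{prop:hull2} requires a ($d$-)smooth NC scheme, so the functor isomorphism must formally be established on the smooth, relation-free thickening $N^{\n}$ first and then cut down by the ideal of relations via the Cartesian square of Proposition~\ref{prop:natural2} --- which is exactly what the paper does and what your appeal to that fibre-product argument implicitly accomplishes.
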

\begin{proof}
In order to 
simplify the notation, we write 
$M=M_{Q, I, \theta}^{\star}(\gamma)$
and 
$N=M_{Q, \{0\}, \theta}^{\star}(\gamma)$. 
Note that $N$ is non-singular. 
Let $U \subset N$ be a sufficiently small 
affine open subset, 
and write 
$h^{\star}=h^{\star}_{Q, \{0\}, \theta}(\gamma)|_{U}$. 
Let $U^{\n}$ be an NC smooth thickening of $U$. 
Similarly to 
Proposition~\ref{prop:natural},
we can construct a 
natural transform $h_{U^{\n}} \to h^{\star}$, 
which is an isomorphism on $\cC om$.
We show that $h_{U^{\n}} \to h^{\star}$ is 
indeed
an isomorphism on $\nN$. 
By Proposition~\ref{prop:hull2}, 
it is enough to show 
the following: 
for surjections $p_j \colon \Lambda_{j} \twoheadrightarrow \Lambda$
in $\nN$ with $j=1, 2$, 
$\Lambda_{12} \cneq \Lambda_1 \times_{\Lambda} \Lambda_2$, 
the natural map
\begin{align}\label{map:star}
h^{\star}(\Lambda_{12}) \to h^{\star}(\Lambda_1) \times_{h^{\star}(\Lambda)}
h^{\star}(\Lambda_2)
\end{align}
is a bijection. 
Let us take an element of the RHS of (\ref{map:star}), i.e. 
elements of 
$h^{\star}(\Lambda_1)$
and $h^{\star}(\Lambda_2)$
which 
are isomorphic over $\Lambda$.  
Following the proof of Proposition~\ref{prop:natural}, 
one can lift it to
an element of the LHS of (\ref{map:star}). 
By Lemma~\ref{lem:aut}, 
the framed isomorphism 
over $\Lambda$ is uniquely determined,  
hence
the similar argument of the proof of
Proposition~\ref{prop:naturald}
shows that  
the above 
lift is uniquely 
determined. 
Thus we obtain a map from the RHS
to the LHS of (\ref{map:star}), which obviously 
gives the inverse of (\ref{map:star}). 
Therefore $h_{U^{\n}} \to h^{\ast}$ 
is an isomorphism. 

By Remark~\ref{rmk:NC}, 
the affine NC structures
$U^{\n}$ 
on each affine open subset $U \subset N$
glue together to give 
the NC structure $(N, \oO_N^{\n})$
on $N$.
Again by Lemma~\ref{lem:aut}, 
the local framed universal representation on 
$U^{\n}$
also 
glue to give 
the global universal framed
representation $(\vV^{\n}, \tau^{\n})$
on $N^{\n}$, which 
induces the functorial 
isomorphism 
$h_{N^{\n}} \to h_{Q, \{0\}, \theta}(\gamma)$
by the above argument.  

Now we consider the subscheme $M \subset N$. 
Let $\jJ_{I} \subset \oO_{N^{\n}}$ be 
the ideal of relation in $I$, 
and set $\oO_M^{\n}=\oO_N^{\n}/\jJ_I$. 
Then 
as in the proof of Proposition~\ref{prop:natural2}, 
the NC scheme
$(M, \oO_M^{\n})$ is 
the desired NC structure 
by the 
Cartesian square: 
\begin{align*}
\xymatrix{
h_{N^{\n}} \ar[r] & h_{Q, \{0\}, \theta}(\gamma) \\
h_{M^{\n}} \ar[r] \ar@{^{(}->}[u] & h_{Q, I, \theta}(\gamma). \ar@{^{(}->}[u]
}
\end{align*}
\end{proof}

\begin{rmk}
By the proof of Theorem~\ref{thm:framedQ}, 
the framed smooth moduli space
$M_{Q, \{0\}, \theta}^{\star}(\gamma)$
has a NC smooth thickening. 
They give many examples of 
varieties which admit NC 
smooth thickening. 
\end{rmk}

\subsection{An example}\label{subsec:anex}
We describe an example of an 
NC thickening of 
the moduli space of representations of a quiver. 
We consider the quiver $Q$
described as 
\begin{align*}
\xymatrix{
\diamond \ar[rr]
&& \star \ar@(r, d)^{x}
\ar@(r, u)_{y}
 }
\end{align*}
with  
relation $I$ given by
\begin{align}\label{rel:xy}
xy=yx. 
\end{align}
We also set the 
vectors $\gamma$ and $\theta$
in $\Gamma_{Q}$
to be
\begin{align*}
\gamma=
(\gamma_{\diamond}, \gamma_{\star})=(1, 2), \ 
\theta=
(\theta_{\diamond}, \theta_{\star})=(-2, 1). 
\end{align*}
A representation 
$W$ of $(Q, I)$ 
with dimension vector $\gamma$
is 
given by the diagram
\begin{align}\label{pic:W}
\xymatrix{
\mathbb{C} \ar[rr]^{f}
&& \mathbb{C}^2 \ar@(r, d)^{A}
\ar@(r, u)_{B}
 }
\end{align}
where
 $f, A, B$ are linear 
maps satisfying $AB=BA$. 
It is easy to see that a $(Q, I)$-representation (\ref{pic:W})
is
$\theta$-stable if and only if 
$f(1)$
generates $\mathbb{C}^2$
as $\mathbb{C}[x, y]$-module. Hence
we have 
the natural identification
\begin{align*}
M_{Q, I, \theta}(\gamma)=\Hilb^2(\mathbb{C}^2)
\end{align*}
where the RHS is the Hilbert scheme of two points on $\mathbb{C}^2$. 
On the other hand, the smooth 
moduli space $M_{Q, \theta}(\gamma)$
is given by
\begin{align*}
M_{Q, \theta}(\gamma)=
\left( \mathbb{C}^2 \times M_2(\mathbb{C}) \times M_2(\mathbb{C}) \right)^{s}
/\mathrm{GL}_{2}(\mathbb{C})
\end{align*}
Here $(-)^s$ means the $\theta$-stable part, and 
the
$\mathrm{GL}_2(\mathbb{C})$ 
action is given by
\begin{align*}
g(v, A, B)=(g^{-1}v, g^{-1}Ag, g^{-1}Bg). 
\end{align*}
By the $\theta$-stability, 
we have $v\neq 0$, hence we have
\begin{align}\label{MQs}
M_{Q, \theta}(\gamma)=
\left( \{(1, 0)^{t}\} \times M_2(\mathbb{C}) \times M_2(\mathbb{C}) \right)^{s}
/G
\end{align}
where $G$ is the stabilizer of $(1, 0)^{t}$
\begin{align*}
G=\left( \begin{array}{cc}
1 & u \\
0 & v
\end{array}  \right), \ 
u, v \in \mathbb{C}. 
\end{align*}
We omit $\{(1, 0)^t\}$ in the notation 
of the RHS of (\ref{MQs}). 
Then we have
\begin{align*}
\left(M_2(\mathbb{C}) \times M_2(\mathbb{C}) \right)^{s}
=\left\{ A=\left( \begin{array}{cc}
a_1 & a_2 \\
a_3 & a_4 
\end{array}
\right), \ 
B=\left( \begin{array}{cc}
b_1 & b_2 \\
b_3 & b_4 
\end{array}
\right) :
a_3 \neq 0 \mbox{ or } b_3 \neq 0
\right\}. 
\end{align*}
The open subsets 
$a_3 \neq 0$, $b_3 \neq 0$ are $G$-invariants, hence 
we obtain the open covering
\begin{align*}
M_{Q, \theta}(\gamma)=U_A \cup U_B
\end{align*}
where $U_A$, $U_B$ are quotients of $a_3 \neq 0$, $b_3 \neq 0$, 
respectively. 
We also obtain the open cover
\begin{align*}
M_{Q, I, \theta}(\gamma)=V_A \cup V_B, \ 
V_{\ast}=U_{\ast} \cap M_{Q, I, \theta}(\gamma). 
\end{align*}
For example, $U_A$ is given by
\begin{align*}
U_A=\left\{ A=\left( \begin{array}{cc}
0 & a_2 \\
1 & a_4 
\end{array}
\right), \ 
B=\left( \begin{array}{cc}
b_1 & b_2 \\
b_3 & b_4 
\end{array}
\right) :
a_i, b_i \in \mathbb{C}
\right\}
\end{align*}
so $U_A \cong \mathbb{C}^6$. 
A smooth NC thickening of $U_A$ is given by
\begin{align*}
U_A^{\n}=\mathrm{Spf}~\mathbb{C}\langle a_2, a_4, b_1, b_2, b_3, b_4 
\rangle_{[[ab]]}. 
\end{align*}
Then the ideal of 
relation (\ref{rel:xy})
in $\oO_{U_A}^{\n}$ is determined by the relation
\begin{align*}
\left( \begin{array}{cc}
0 & a_2 \\
1 & a_4 
\end{array}
\right)
\left( \begin{array}{cc}
b_1 & b_2 \\
b_3 & b_4 
\end{array}
\right)
=\left( \begin{array}{cc}
b_1 & b_2 \\
b_3 & b_4 
\end{array}
\right)
\left( \begin{array}{cc}
0 & a_2 \\
1 & a_4 
\end{array}
\right)
\end{align*}
where we regard $a_i$ and $b_i$ as non-commutative variables
in $\oO_{U_A}^{\n}$. 
By expanding the above matrix multiplications, 
we obtain 
\begin{align}\label{mat:ex}
\left( \begin{array}{cc}
a_2 b_3 -b_2 & a_2 b_4-b_1 a_2-b_2 a_4 \\
b_1+a_4 b_3-b_4 & b_2+a_4 b_4-b_3 a_2 -b_4 a_4 
\end{array}
\right)=0. 
\end{align}
By definition, 
two sided ideal 
$\jJ_A \subset \oO_{U_A}^{\n}$
of relations in $I$
is generated by the 
matrix components of the LHS of (\ref{mat:ex})
\begin{align*}
\jJ_A=( a_2 b_3-b_2, a_2 b_4-b_1 a_2-b_2 a_4, 
b_1 + a_4 b_3 -b_4, b_2 + a_4 b_4-b_3 a_2 -b_4 a_4 ). 
\end{align*}
The quotient 
$\oO_{V_A}^{\n}=\oO_{U_A}^{\n}/\jJ_A$
gives an
NC thickening of $V_A$
\begin{align*}
V_A^{\n}=\mathrm{Spf}~\frac{\mathbb{C}\langle a_2, a_4, b_1, b_3 
\rangle_{[[ab]]}}
{([a_2, b_1]+
a_2[a_4, b_3], [a_2, b_3]+[a_4, b_1]+a_4[a_4, b_3])}. 
\end{align*}
Note that $V_A \cong \mathbb{C}^4$, with
coordinates $(a_2, a_4, b_1, b_3)$. 
In the same way, 
we obtain an NC thickening of 
$V_B\cong \mathbb{C}^4$
\begin{align*}
V_B^{\n}=
\mathrm{Spf}~\frac{\mathbb{C}\langle b_2', b_4', a_1', a_3' 
\rangle_{[[ab]]}}
{([b_2', a_1']+
b_2'[b_4', a_3'], [b_2', a_3']+[b_4', a_1']+b_4'[b_4', a_3'])}.
\end{align*}
Note that
\begin{align*}
V_A \cap V_B =\{b_3 \neq 0\}=\{a_3' \neq 0\}. 
\end{align*}
The gluing isomorphism
\begin{align*}
V_A^{\n}|_{V_A \cap V_B} \stackrel{\cong}{\to}
V_B^{\n}|_{V_A \cap V_B}
\end{align*}
is calculated as
\begin{align*}
&a_3' \mapsto b_3^{-1} \\
&a_1' \mapsto -b_1 b_3^{-1} \\
&b_4' \mapsto b_1+b_3^{-1}b_1 b_3 + b_3^{-1}a_4 b_3^2 \\
&b_2' \mapsto a_2 b_3^2-b_1 b_3^{-1}b_1 b_3 -b_1 b_3^{-1} a_4 b_3^2. 
\end{align*}

\section{Quasi NC structures on moduli space of stable sheaves}\label{sec:NC3}
In this section, using the results in the previous 
section, we show the existence of 
quasi NC structures on the moduli spaces of 
stable sheaves on projective schemes
satisfying the desired property in Theorem~\ref{intro:main}. 
\subsection{Graded algebras}
Let $(X, \oO_X(1))$ be a polarized 
projective scheme over $\mathbb{C}$.
Then $X=\mathrm{Proj}(A)$ for
the graded $\mathbb{C}$-algebra $A$
given by
\begin{align*}
A=\bigoplus_{i\ge 0} H^0(X, \oO_X(i)). 
\end{align*}
Below, we set 
$\mathfrak{m}\cneq A_{>0}$ 
the maximal ideal of $A$. 

For $\Lambda \in \nN$, 
we set
$A_{\Lambda} \cneq A \otimes_{\mathbb{C}} \Lambda$. 
Let $A_{\Lambda} \modu_{\gr}$
be the category of finitely generated 
graded left $A_{\Lambda}$-modules. 
For $M \in A_{\Lambda} \modu_{\rm{gr}}$, we denote by $M_i$ the 
degree $i$-part of $M$, 
and $M(j)$ the graded left $A_{\Lambda}$-module 
such that $M(j)_{i}=M_{j+i}$. 
For 
an interval $\mathbb{I} \subset \mathbb{Z}$, 
we define  
\begin{align*}
A_{\Lambda}
\modu_{\mathbb{I}}  \subset A_{\Lambda} \modu_{\rm{gr}}
\end{align*}
to be 
the subcategory of graded 
left $A_{\Lambda}$-modules $M$ 
such that
$M_i=0$ for $i\notin I$. 
For $q>p>0$, we
describe the category 
$A_{\Lambda} \modu_{[p, q]}$
in terms of a quiver with relation.

We define the quiver $Q_{[p, q]}$ 
whose set of vertices is
\begin{align*}
\{p, p+1, \cdots, q\}. 
\end{align*}
The number of arrows 
of $Q_{[p, q]}$ from $i$ to $j$ is given by   
$\dim_{\mathbb{C}}\mathfrak{m}_{j-i}$
(cf.~Figure~\ref{fig:quiver}).  
Below we fix bases of $\mathfrak{m}_k$
for each $k\in \mathbb{Z}_{\ge 1}$, 
and identify the set of arrows from $i$ to $j$ with 
the set of 
basis elements of $\mathfrak{m}_{j-i}$. 
The multiplication
\begin{align}
\vartheta \colon 
\mathfrak{m}_{j-i} \otimes_{\mathbb{C}} \mathfrak{m}_{k-j}
 \to \mathfrak{m}_{k-i}
\end{align}
in $A$
defines the relation in $Q_{[p, q]}$, 
by defining the two sided 
ideal $I \subset \mathbb{C}[Q_{[p, q]}]$
to be generated by  
\begin{align*}
\vartheta(\alpha \otimes \beta)-\alpha \cdot \beta, \ 
\alpha \in \mathfrak{m}_{i-j}, \ \beta \in \mathfrak{m}_{k-j}.
\end{align*}
Here we have regarded $\alpha$, $\beta$ as 
formal linear combinations of paths from $i$ to
$j$, $j$ to $k$, respectively, 
and $\alpha \cdot \beta$ is the multiplication in 
$\mathbb{C}[Q_{[p, q]}]$. 
From the construction of $(Q_{[p, q]}, I)$, 
sending $(\{W_i\}_{i=p}^{q}, \{\phi_a\})$ 
to $\oplus_{i=p}^{q}W_i$ 
gives  
the equivalence
\begin{align}\label{equiv}
\mathrm{Rep}((Q_{[p, q]}, I)/\Lambda) \stackrel{\sim}{\to}
A_{\Lambda}\modu_{[p, q]} . 
\end{align}
\begin{figure}\label{fig:quiver}
\begin{align*}
\xymatrix{ \stackrel{p}{\bullet} 
\ar@<0ex>[r]^{\times 3}
\ar@/^/@<2ex>[rr]^{\times 6}
\ar@/_/@<-2ex>[rrr]_{\times 10}
\ar@/^/@<5ex>[rrrr]^{\times 15}
\ar@/_/@<-5ex>[rrrrr]_{\times 21}
& \stackrel{p+1}{\bullet}
\ar@<0ex>[r]^{\times 3}
\ar@/^/@<2ex>[rr]^{\times 6}
\ar@/_/@<-2ex>[rrr]_{\times 10}
\ar@/^/@<5ex>[rrrr]^{\times 15}
& \stackrel{p+2}{\bullet}
\ar@<0ex>[r]^{\times 3}
\ar@/^/@<2ex>[rr]^{\times 6}
\ar@/_/@<-2ex>[rrr]_{\times 10}
& \stackrel{p+3}{\bullet}
\ar@<0ex>[r]^{\times 3}
\ar@/^/@<2ex>[rr]^{\times 6}
& \stackrel{p+4}{\bullet}
\ar@<0ex>[r]^{\times 3}
& \stackrel{p+5}{\bullet}
 }
\end{align*}\caption{Quiver $Q_{[p, p+5]}$ 
for $X=\mathbb{P}^2$}
\end{figure}
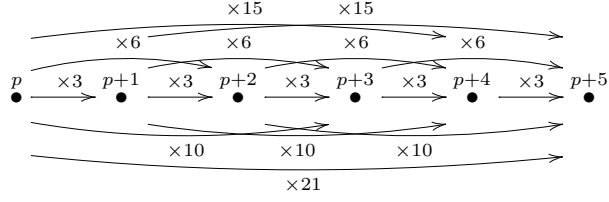
\subsection{Moduli stacks of semistable sheaves}
For $F \in \Coh(X)$,
let $\alpha(F, t)$ be its Hilbert polynomial
\begin{align*}
\alpha(F, t) \cneq \chi(F \otimes \oO_X(t))
\end{align*}
and $\overline{\alpha}(F, t)=\alpha(F, t)/c$ 
its reduced Hilbert polynomial, where 
$c$ is the leading coefficient of $\alpha(F, t)$.  
We recall the notion of (semi)stable sheaves (cf.~\cite{Hu}). 
\begin{defi}
A coherent sheaf $F$ on $X$
is called (semi)stable if 
it is a pure sheaf, 
and for any 
subsheaf $0 \subsetneq F' \subsetneq F$, we have 
\begin{align}\label{def:stab}
\overline{\alpha}(F', k) <(\le) \overline{\alpha}(F, k), \ 
k\gg 0.   
\end{align}
\end{defi}
Let us take a polynomial $\alpha \in \mathbb{Q}[t]$, 
which is a Hilbert polynomial of some coherent sheaf on $X$.  
The moduli stack
\begin{align}\label{stack:M}
\mathfrak{M}_{\alpha}^{} \colon 
\sS ch/\mathbb{C} \to \gG roupoid
\end{align}
is defined by sending 
a $\mathbb{C}$-scheme $T$
to the groupoid of 
$T$-flat 
sheaves $\fF \in \Coh(X \times T)$
such that for any $t\in T$, 
the sheaf $\fF_t \cneq \fF|_{X \times \{t\}}$
is $\theta$-semistable with Hilbert polynomial $\alpha$. 
The stack (\ref{stack:M}) is known to be
an algebraic stack of finite type over $\mathbb{C}$. 

Following~\cite{BFHR}, 
we relate the moduli stack 
$\mathfrak{M}_{\alpha}$ with
the moduli stack of semistable 
representations of $(Q_{[p, q]}, I)$ for $q\gg p \gg 0$. 
Let $T$ be a $\mathbb{C}$-scheme of finite type
and 
$\fF \in \Coh(X \times T)$
an object of
$\mathfrak{M}_{\alpha}(T)$.
Below we write 
$p_X \colon X \times T \to X$, 
$p_T \colon X \times T \to T$
the projections, 
and we 
set $\fF(i) \cneq \fF \otimes p_X^{\ast}\oO_X(i)$
for $i\in \mathbb{Z}$. 
For $q\gg p \gg 0$
and $i\in [p, q]$, 
we have $\dR p_{T\ast} \fF(i)=p_{T\ast} \fF(i)$, 
and the sheaf
\begin{align}\label{oplus}
\Gamma_{[p, q]}(\fF) \cneq 
\bigoplus_{i=p}^{q} p_{T\ast} \fF(i)
\end{align}
is a locally free
sheaf on $T$. 
In fact, the sheaf (\ref{oplus}) is 
a sheaf of graded $A \otimes_{\mathbb{C}} \oO_{T}$ algebras
whose graded pieces are concentrated on $[p, q]$. 
Hence 
by the equivalence (\ref{equiv}),  
$\Gamma_{[p, q]}(\fF)$ 
is a flat 
representation of $(Q_{[p, q]}, I)$
over $T$
with dimension vector 
\begin{align*}
\alpha_{[p, q]} \cneq 
(\alpha(p), \alpha(p+1), \cdots, \alpha(q)).
\end{align*}
Now we consider the stability
condition on $(Q_{[p, q]}, I)$. 
We set $\theta \in \Gamma_{Q_{[p, q]}}$ to be
\begin{align*}
\theta_p=-\alpha(q), \ \theta_q=\alpha(p), \ 
\theta_i=0 \mbox{ for } i\neq p, q. 
\end{align*}
By~\cite[Theorem~3.7]{BFHR},
the representation (\ref{oplus}) of $(Q_{[p, q]}, I)$
is a flat family of $\theta$-semistable representations over $T$. 
\begin{thm}\emph{(\cite[Corollary~3.4]{BFHR})}\label{thm:BFHR}
There exist $q \gg p \gg 0$
such that the 
functor $\fF \mapsto \Gamma_{[p, q]}(\fF)$
defines a 
morphism of algebraic stacks
\begin{align}\label{Gpq:stack} 
\mathfrak{M}_{\alpha}^{} \to 
\mathfrak{M}_{Q_{[p, q]}, I, \theta}^{}(\alpha_{[p, q]})
\end{align}
which is an open immersion. 
\end{thm}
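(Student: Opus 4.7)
The plan is to follow the \'Alvarez-C\'onsul--King strategy of recovering a semistable sheaf from a sufficiently high truncation of its graded module of global sections, adapted to families.

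First I would invoke the boundedness of the family of semistable sheaves with Hilbert polynomial $\alpha$ to choose $p_0$ such that every such sheaf $F$ on $X$ is Castelnuovo--Mumford $p$-regular for $p\ge p_0$: in particular $H^{>0}(F(k))=0$ for $k\ge p-1$, the sheaf $F(p)$ is globally generated, and the multiplication maps $H^0(F(p))\otimes H^0(\oO_X(k))\twoheadrightarrow H^0(F(p+k))$ are surjective. Combined with cohomology-and-base-change, this ensures that for $q\ge p\ge p_0$ and any $\fF\in\mathfrak{M}_{\alpha}(T)$, each $p_{T\ast}\fF(i)$ is locally free of rank $\alpha(i)$ and its formation commutes with base change, so $\Gamma_{[p,q]}(\fF)$ is a flat family of $(Q_{[p,q]},I)$-representations over $T$ with dimension vector $\alpha_{[p,q]}$. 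The fiberwise $\theta$-semistability for $q\gg p\gg 0$ is precisely \cite[Theorem~3.7]{BFHR}, so the map (\ref{Gpq:stack}) is a well-defined morphism of algebraic stacks.

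Next I would construct a quasi-inverse. Given a flat $\theta$-semistable representation $W=\{W_i\}_{i=p}^{q}$ of $(Q_{[p,q]},I)$ over $T$, corresponding under (\ref{equiv}) to a graded $A\otimes_{\mathbb{C}}\oO_T$-module concentrated in degrees $[p,q]$, I would extend it to a genuine graded $A\otimes_{\mathbb{C}}\oO_T$-module via the natural truncation/quotient, and sheafify to obtain a coherent sheaf $\widetilde{W}$ on $X\times T=\mathrm{Proj}(A\otimes_{\mathbb{C}}\oO_T)$. For $\fF\in\mathfrak{M}_{\alpha}(T)$, the $p$-regularity of each fiber forces the counit $\widetilde{\Gamma_{[p,q]}(\fF)}\stackrel{\cong}{\to}\fF$ to be an isomorphism, giving a left inverse to $\Gamma_{[p,q]}$ on the image.

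To upgrade this to an open immersion, I would verify three properties. \emph{(a) Monomorphism:} the reconstruction $\widetilde{\Gamma_{[p,q]}(\fF)}\cong \fF$ implies that $\Gamma_{[p,q]}$ is fully faithful on $T$-points, since automorphisms of $\Gamma_{[p,q]}(\fF)$ as graded $A\otimes_{\mathbb{C}}\oO_T$-modules are in bijection with automorphisms of $\fF$. \emph{(b) Openness of image:} for a general flat $\theta$-semistable family $W$ over $T$, the locus $T^{\circ}\subset T$ where $\widetilde{W}$ is $T$-flat with fiberwise Hilbert polynomial $\alpha$ and the adjunction $W|_{T^{\circ}}\to\Gamma_{[p,q]}(\widetilde{W}|_{T^{\circ}})$ is an isomorphism is cut out by rank-constancy and vanishing conditions, hence open by semicontinuity. \emph{(c) \'Etaleness:} the adjunction identifies the tangent--obstruction theories of the two moduli problems on $T^{\circ}$, because first-order deformations of $\widetilde{W}=\fF$ correspond under $\Gamma_{[p,q]}$ to first-order deformations of $W$ by the $p$-regularity vanishing.

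The main technical obstacle, which is the heart of \cite{BFHR}, is uniformly choosing $(p,q)$ so that these regularity and multiplication-surjectivity statements persist in families of $\theta$-semistable $(Q_{[p,q]},I)$-representations (not just families of semistable sheaves), and controlling the adjunction in step~(c) across non-reduced bases. Both ultimately rely on the boundedness of the relevant moduli stacks and a careful analysis of the graded multiplication maps in families.
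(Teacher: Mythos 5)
This theorem is not proved in the paper at all: it is imported verbatim from \cite[Corollary~3.4]{BFHR} (with the semistability comparison being \cite[Theorem~3.7]{BFHR}, ultimately resting on \cite[Theorem~5.10]{ACK}), so there is no internal argument to compare yours against. Your outline reconstructs the strategy of that reference, which is also the machinery this paper re-uses later: uniform Castelnuovo--Mumford regularity plus cohomology-and-base-change to make $\fF \mapsto \Gamma_{[p,q]}(\fF)$ a well-defined morphism of stacks into flat families of $(Q_{[p,q]},I)$-representations with dimension vector $\alpha_{[p,q]}$; the left adjoint ``sheafification'' functor $\wp$ with counit $\wp\circ\Gamma_{[p,q]}(\fF)\to\fF$ an isomorphism (exactly \cite[Propositions~3.1, 3.2]{BFHR}, which the paper invokes in Lemma~\ref{lem:adjoint} and Proposition~\ref{inv:com}); full faithfulness from the adjunction together with the counit isomorphism; and openness of the locus where a flat family of $\theta$-semistable representations arises from a flat family of sheaves with Hilbert polynomial $\alpha$. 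So the route is the right one, and is the one the cited source takes.

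The caveat is that, as a standalone proof, your sketch defers precisely the substantive content to the results being established: the uniform choice of $q\gg p\gg 0$ for which $F$ is semistable if and only if $\Gamma_{[p,q]}(F)$ is $\theta$-semistable. Both directions are needed, and the ``only if'' direction (a $\theta$-semistable representation in your open locus $T^{\circ}$ comes from a \emph{semistable} sheaf, not merely from some sheaf with Hilbert polynomial $\alpha$) is not a consequence of semicontinuity; it is the ACK-type comparison theorem. Likewise the persistence of the regularity and multiplication-surjectivity statements over arbitrary, possibly non-reduced, bases is part of the cited input. You acknowledge this in your final paragraph, which is acceptable given that the statement is itself a citation in the paper. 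One small economy: your step (c) is redundant once (a) and (b) are in place, since a morphism of stacks that is fully faithful on $T$-points and whose base change along any $T$-point of the target is an open immersion of schemes is already an open immersion; no separate \'etaleness or tangent--obstruction comparison is required.
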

Let $\mathfrak{M}_{[p, q]}$
be the image of the morphism (\ref{Gpq:stack}). 
By Theorem~\ref{thm:BFHR}, 
we have the isomorphism of stacks
\begin{align*}
\Gamma_{[p, q]} \colon 
\mathfrak{M}_{\alpha}^{}
\stackrel{\cong}{\to}
\mathfrak{M}_{[p, q]}. 
\end{align*}

\subsection{Non-commutative thickening of moduli stacks}
For $\Lambda \in \nN$, we 
denote by $X_{\Lambda}$
the NC scheme defined by $X \times \Spf \Lambda$. 
We define the 2-functor
\begin{align}\label{def:Mnc}
\mathfrak{M}_{\alpha}^{\n} \colon \nN \to \gG roupoid
\end{align}
by sending $\Lambda \in \nN$ to the 
groupoid of $\fF \in \Coh(X_{\Lambda})$ which 
is flat 
over $\Lambda$
such that $\fF^{ab} \in \mathfrak{M}_{\alpha}(\Spec \Lambda^{ab})$. 
Similarly we define the 2-functor
\begin{align}\label{def:Mnc2}
\mathfrak{M}_{[p, q]}^{\n}
\colon \nN \to \gG roupoid
\end{align}
by sending $\Lambda \in \nN$ 
to the groupoid of 
$\wW \in \mathrm{Rep}((Q_{[p, q]}, I)/\Lambda)$
flat over $\Lambda$
such that 
$\wW^{ab} \in \mathfrak{M}_{[p, q]}(\Spec \Lambda^{ab})$. 
Note that on the subcategory $\cC om \subset \nN$, 
 (\ref{def:Mnc}) and (\ref{def:Mnc2}) 
coincide with $\mathfrak{M}_{\alpha}$
and $\mathfrak{M}_{[p, q]}$ respectively. 
By Theorem~\ref{thm:BFHR},
the functor $\Gamma_{[p, q]}$ gives the isomorphism
for $q \gg p \gg 0$: 
\begin{align}\label{funct:isom}
\Gamma_{[p, q]} \colon 
\mathfrak{M}_{\alpha}^{\n}|_{\cC om} \stackrel{\cong}{\to}
\mathfrak{M}_{[p, q]}^{\n}|_{\cC om}. 
\end{align} 
Below we fix such $q \gg p \gg 0$. 
\begin{lem}\label{lem:projective}
For $\Lambda \in \nN$
and $\fF \in \mathfrak{M}_{\alpha}^{\n}(\Lambda)$, 
we have $\dR \Gamma(\fF(i))=\Gamma(\fF(i))$
for $i\in [p, q]$, and 
the
left $\Lambda$-module 
\begin{align}\label{GpqF}
\Gamma_{[p, q]}(\fF) =\bigoplus_{i\in [p, q]}
\Gamma(\fF(i))
\end{align}
is flat over $\Lambda$. 
Moreover the natural
morphism 
\begin{align}\label{natG}
u \colon \Gamma_{[p, q]}(\fF)^{ab} \to \Gamma_{[p, q]}(\fF^{ab})
\end{align}
is 
an isomorphism
of representations of $(Q_{[p, q]}, I)$
over $\Lambda^{ab}$. 
\end{lem}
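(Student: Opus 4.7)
The plan is induction on the NC-nilpotence degree $d$ of $\Lambda$, bootstrapping from standard commutative cohomology and base change through the central extensions arising from the NC-filtration. For the base case $d=0$ we have $\Lambda=\Lambda^{ab}$ commutative, so $\fF\in\mathfrak{M}_\alpha(\Spec\Lambda)$ is a $\Lambda$-flat family of semistable sheaves with Hilbert polynomial $\alpha$. Boundedness of such families (Castelnuovo--Mumford regularity / Serre vanishing) supplies a uniform $p_0\gg 0$, depending only on $\alpha$ and $X$, such that $H^{>0}(X,\fF_t(i))=0$ and $h^0(X,\fF_t(i))=\alpha(i)$ for every closed point $t\in\Spec\Lambda$ and every $i\geq p_0$. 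Fixing $p\geq p_0$ once and for all, the classical cohomology and base change theorem gives $\dR p_{T*}\fF(i)=p_{T*}\fF(i)$, locally free of rank $\alpha(i)$, for $i\in[p,q]$; the lemma is immediate in the commutative case.

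For the inductive step, I would use the central extension
\[
0\to J\to\Lambda\to\Lambda'\to 0,\qquad J\cneq F^d\Lambda,\ \Lambda'\cneq\Lambda/F^d\Lambda\in\nN_{d-1},
\]
for which multiplicativity of the NC-filtration together with $F^{d+1}\Lambda=0$ forces $F^1\Lambda\cdot J=0$, so $J$ is naturally a $\Lambda^{ab}$-module. Flatness of $\fF$ over $\Lambda$ then yields the short exact sequence of sheaves on $X\times\Spec\Lambda^{ab}$
\[
0\to J\otimes_{\Lambda^{ab}}\fF^{ab}\to\fF\to\fF'\to 0,\qquad \fF'\cneq\Lambda'\otimes_\Lambda\fF\in\mathfrak{M}_\alpha^{\n}(\Lambda'),
\]
where the leftmost term is identified via centrality of $J$. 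Applying the inductive hypothesis to $\fF'$ and the base case to $\fF^{ab}$ (the latter combined with the projection formula against the locally free $\Lambda^{ab}$-module $\Gamma(\fF^{ab}(i))$) gives $H^{>0}(\fF'(i))=H^{>0}(J\otimes_{\Lambda^{ab}}\fF^{ab}(i))=0$ and identifies $H^0$ of the leftmost term with $J\otimes_{\Lambda^{ab}}\Gamma(\fF^{ab}(i))$; the long exact cohomology sequence then delivers $H^{>0}(\fF(i))=0$ together with the short exact sequence
\[
0\to J\otimes_{\Lambda^{ab}}\Gamma(\fF^{ab}(i))\to\Gamma(\fF(i))\to\Gamma(\fF'(i))\to 0.
\]

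The $\Lambda$-flatness of $\Gamma(\fF(i))$ would then follow from the NC local flatness criterion applied to the central extension $\Lambda\twoheadrightarrow\Lambda'$: one needs $\Lambda'\otimes_\Lambda\Gamma(\fF(i))\cong\Gamma(\fF'(i))$ to be $\Lambda'$-flat (given by induction) and the map $J\otimes_{\Lambda^{ab}}\Gamma(\fF(i))^{ab}\to\Gamma(\fF(i))$ to be injective, which is exactly the left-exactness displayed above. For the isomorphism $u$, one observes that $\Gamma(\fF(i))^{ab_\Lambda}$ is the iterated quotient by $J\cdot\Gamma(\fF(i))$ and then by $F^1\Lambda'\cdot\Gamma(\fF'(i))$, which the displayed sequence identifies with $\Gamma(\fF'(i))^{ab_{\Lambda'}}$; by the inductive hypothesis this equals $\Gamma(\fF'^{ab}(i))=\Gamma(\fF^{ab}(i))$, and compatibility with the quiver morphisms of $(Q_{[p,q]},I)$ is automatic from naturality of taking global sections. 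The main technical obstacle I anticipate is the precise handling of tensor products of sheaves over the non-commutative base $\oO_{X_\Lambda}$ and the invocation of an NC version of the local flatness criterion; centrality of $J=F^d\Lambda$ is exactly what makes both of these go through cleanly.
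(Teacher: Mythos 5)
Your proposal is correct in substance but takes a different route from the paper's proof. The paper argues in one shot: it filters $\fF$ by the NC filtration of $\Lambda$, so that the subquotients are $\gr_F^{\bullet}(\Lambda)\otimes_{\Lambda^{ab}}\fF^{ab}$, applies the projection formula together with the vanishing $\dR\Gamma(\fF^{ab}(i))=\Gamma(\fF^{ab}(i))$ for $i\in[p,q]$ to get $\dR\Gamma(\fF(i))=\Gamma(\fF(i))$ by d\'evissage, and then invokes derived base change $\Lambda^{ab}\dotimes_{\Lambda}\dR\Gamma(\fF(i))\cong\dR\Gamma(\fF^{ab}(i))$; this single isomorphism packages simultaneously the Tor-vanishing needed for flatness over the nilpotent extension $\Lambda\twoheadrightarrow\Lambda^{ab}$ and the identification (\ref{natG}). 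You unroll this into an induction on the NC nilpotence degree, treating one central extension $0\to J\to\Lambda\to\Lambda'\to 0$ at a time with explicit long exact sequences and the local flatness criterion over a square-zero extension. Both routes rest on the same inputs (commutative vanishing on $[p,q]$, projection formula against $\Lambda^{ab}$-modules, flatness of $\fF$, centrality of the graded pieces of the NC filtration); yours makes the flatness mechanism explicit, while the paper's derived-tensor formulation is shorter because the bookkeeping is automatic.

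One step of yours needs reordering. The injectivity demanded by your flatness criterion is that of the multiplication map $J\otimes_{\Lambda^{ab}}\Gamma(\fF(i))^{ab}\to\Gamma(\fF(i))$, and this is not literally the left-exactness you display, whose injective arrow is $J\otimes_{\Lambda^{ab}}\Gamma(\fF^{ab}(i))\to\Gamma(\fF(i))$; the two coincide only once one knows $\id_J\otimes u$ is an isomorphism. Similarly, identifying $\Gamma(\fF(i))/J\cdot\Gamma(\fF(i))$ with $\Gamma(\fF'(i))$ requires that $J\cdot\Gamma(\fF(i))$ exhaust the image of $J\otimes_{\Lambda^{ab}}\Gamma(\fF^{ab}(i))$, i.e. surjectivity of the restriction map $\Gamma(\fF(i))\to\Gamma(\fF^{ab}(i))$. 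Both points are supplied by your own exact sequence: it gives $\Gamma(\fF(i))\twoheadrightarrow\Gamma(\fF'(i))$, and composing with the inductive isomorphism for $\fF'$ shows this restriction map is surjective; hence $J\cdot\Gamma(\fF(i))$ equals the displayed kernel, $\Gamma(\fF(i))^{ab}\cong\Gamma(\fF'(i))^{ab}\cong\Gamma(\fF^{ab}(i))$, and $u$ is an isomorphism. Only then deduce injectivity of the multiplication map and conclude flatness. With the order "(\ref{natG}) first, flatness second" your induction closes correctly.
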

\begin{proof}
Since $\fF^{ab} \in \mathfrak{M}_{\alpha}(\Spec \Lambda^{ab})$, 
we have 
$\dR \Gamma(\fF^{ab}(i))=
\Gamma(\fF^{ab}(i))$
for $i \in [p, q]$
by our choice of
$q \gg p \gg 0$. 
Note that $\fF$
admits a filtration whose subquotient
is given by 
$\gr_F^{\bullet}(\Lambda) \otimes_{\Lambda^{ab}} \fF^{ab}$. 
By the projection formula, 
for $i\in [p, q]$ we have 
\begin{align}\notag
\dR \Gamma(\gr_{F}^{\bullet}(\Lambda) \otimes_{\Lambda^{ab}} \fF^{ab}(i))
\cong \gr_{F}^{\bullet}(\Lambda) \otimes_{\Lambda^{ab}}\Gamma(\fF^{ab}(i)). 
\end{align}
Therefore we have
$\dR \Gamma(\fF(i)) = \Gamma(\fF(i))$
for $i\in [p, q]$. 
Also by the derived base change 
we have
\begin{align}\notag
\Lambda^{ab} \dotimes_{\Lambda} \dR \Gamma(\fF(i))
\cong \dR \Gamma(\fF^{ab}(i)). 
\end{align}
By combining
the above isomorphism with 
$\dR \Gamma(\fF^{ab}(i))=\Gamma(\fF^{ab}(i))$
and  
$\dR \Gamma(\fF(i))=\Gamma(\fF(i))$, 
we have the isomorphism
\begin{align*}
\Lambda^{ab} \dotimes_{\Lambda} 
\Gamma_{[p, q]}(\fF) \cong 
\Gamma_{[p, q]}(\fF^{ab}).
\end{align*}
Since 
$\Gamma_{[p, q]}(\fF^{ab})$ is flat over
$\Lambda^{ab}$, 
the above isomorphism 
implies that 
the left $\Lambda$-module 
(\ref{GpqF}) is 
flat
and
the morphism (\ref{natG}) is an isomorphism.
\end{proof}
By the above lemma, 
the isomorphism (\ref{funct:isom}) 
extends to the 1-morphism
\begin{align}\label{extend}
\Gamma_{[p, q]} \colon 
\mathfrak{M}_{\alpha}^{\n} \to 
\mathfrak{M}_{[p, q]}^{\n}
\end{align}
by sending 
$\fF$ to $\Gamma_{[p, q]}(\fF)$. 
\subsection{The left adjoint functor}
The purpose here is to construct 
the left adjoint of 
the functor (\ref{GpqF}). 
We first interpret $\Gamma_{[p, q]}$
in a derived categorical way. 
Let 
\begin{align*}
A_{\Lambda}
\modu_{\rm{tor}} \subset A_{\Lambda} \modu_{\gr}
\end{align*}
be the subcategory of finitely generated 
graded left $A_{\Lambda}$-modules $M$
with $M_i=0$ for $i\gg 0$. 
By Serre's theorem, we have
the equivalence
\begin{align}\label{Serre}
\Coh(X_{\Lambda}) \stackrel{\sim}{\to}
A_{\Lambda} \modu_{\gr}/A_{\Lambda} \modu_{\rm{tor}}
\end{align}
given by $\fF \mapsto \oplus_{i\ge 0} \Gamma(\fF(i))$. 
Below we identify the both sides of (\ref{Serre})
via the above equivalence.

On the other hand, we have the functor
\begin{align}\label{Ggep}
\dR \Gamma_{\ge p} \colon 
D^b (\Coh(X_{\Lambda})) \to D^b (A_{\Lambda} \modu_{\ge p})
\end{align}
defined by
\begin{align}\label{GpE}
\dR \Gamma_{\ge p}(E) \cneq 
\bigoplus_{i\ge p} \dR \Gamma(E(i)). 
\end{align}
The functor (\ref{Ggep})
has a left adjoint, given by the quotient functor
\begin{align}\label{pi:quot}
\pi \colon 
D^b (A_{\Lambda} \modu_{\ge p})
\to D^b \left(A_{\Lambda} \modu_{\gr}/A_{\Lambda} \modu_{\rm{tor}} \right).
\end{align}
Since $\pi \circ \dR \Gamma_{\ge p}=\id$, the 
functor (\ref{GpE}) is fully faithful. 

Let $\mathbb{C}(-i)$ be the one dimensional
graded $A$-module 
located in degree $i$. 
We define the subcategory 
\begin{align}\label{Spq}
\sS_{[p, q]} \subset D^b (A_{\Lambda} \modu_{\ge p})
\end{align}
to be the smallest triangulated
subcategory which contains 
objects of the form $\mathbb{C}(-i) \otimes_{\mathbb{C}} M$
for $i \in [p, q]$ and $M \in \Lambda \modu$. 
Similarly 
let
\begin{align*}
\pP_{[p, q]} \subset D^b(A_{\Lambda} \modu_{\ge p})
\end{align*} 
be the smallest triangulated subcategory 
which contains objects of the form 
$A(-i)
\otimes_{\mathbb{C}} M$
for $M\in \Lambda \modu$
and $i \in [p, q]$. 
\begin{lem}
We have the semiorthogonal decompositions:
\begin{align}\label{SOD1}
D^b(A_{\Lambda} \modu_{\ge p})&=\langle \sS_{[p, q]}, 
D^b (A_{\Lambda} \modu_{> q}) \rangle, \\
\label{SOD2}
D^b (A_{\Lambda} \modu_{\ge p})&=
\langle D^b(A_{\Lambda} \modu_{>q}), \pP_{[p, q]} \rangle.
\end{align}
\end{lem}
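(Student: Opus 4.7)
The plan is to prove each semiorthogonal decomposition by verifying the semiorthogonality in all shifts and then exhibiting, for every $X \in D^b(A_\Lambda\modu_{\ge p})$, an explicit triangle realizing the decomposition.

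For the semiorthogonality, both cases reduce to $\Hom$-computations on the given generators. For the first SOD, I would take $N \in A_\Lambda\modu_{>q}$ and resolve it by a graded free complex whose terms are $A(-d) \otimes_\mathbb{C} W$ with $d > q$; then for $i \in [p, q]$ and $M \in \Lambda\modu$,
\[
\Hom_{A_\Lambda}(A(-d) \otimes_\mathbb{C} W, \mathbb{C}(-i) \otimes_\mathbb{C} M) = \Hom_\Lambda(W, (\mathbb{C}(-i) \otimes_\mathbb{C} M)_d) = 0
\]
for $d > q \ge i$, so $\Ext^*_{A_\Lambda}(N, \mathbb{C}(-i) \otimes_\mathbb{C} M) = 0$. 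For the second SOD, the natural adjunction
\[
\Ext^*_{A_\Lambda}(A(-i) \otimes_\mathbb{C} M, N) = \Ext^*_\Lambda(M, N_i)
\]
(obtained by using that $A_\Lambda$ is $\Lambda$-free on one side) gives the vanishing directly for $i \in [p, q]$ and $N \in A_\Lambda\modu_{>q}$, since $N_i = 0$.

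For the generation in the first SOD, I begin with an abelian $X \in A_\Lambda\modu_{\ge p}$ and consider the graded submodule $X_{>q} \cneq \bigoplus_{i>q} X_i$. The quotient $X/X_{>q}$ is supported in degrees $[p, q]$, and any such module $Y$ carries the finite filtration $Y \supset Y_{\ge p+1} \supset \cdots \supset Y_{\ge q+1} = 0$ with successive subquotients $Y_{\ge j}/Y_{\ge j+1} \cong \mathbb{C}(-j) \otimes_\mathbb{C} Y_j$, so $Y \in \sS_{[p,q]}$; the short exact sequence $0 \to X_{>q} \to X \to X/X_{>q} \to 0$ then provides the desired triangle. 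For the second SOD, I take a graded free resolution $F^\bullet \to X$ with each $F^n$ a direct sum of $A(-d) \otimes_\mathbb{C} W$ with $d \ge p$, and split it by generator degree as $F^n = F^n_{[p,q]} \oplus F^n_{>q}$. The key observation is that the differential sends a generator of degree $d$ to a sum of terms $a \otimes w'$ involving only generators of degrees $\le d$, hence cannot map a $[p,q]$-generator to a $>q$-generator; therefore $F^\bullet_{[p,q]}$ is a subcomplex and the short exact sequence $0 \to F^\bullet_{[p,q]} \to F^\bullet \to F^\bullet_{>q} \to 0$ yields a triangle $F^\bullet_{[p,q]} \to X \to F^\bullet_{>q}$ with left side in $\pP_{[p,q]}$ and right side in $D^b(A_\Lambda\modu_{>q})$. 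Induction on cohomological amplitude extends the argument from abelian objects to all of $D^b$.

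The main obstacle is ensuring that the resolution $F^\bullet$ can be chosen bounded so the triangle really lives in $D^b$. This is where the finiteness of the quiver $Q_{[p,q]}$ is essential: the Serre quotient $A_\Lambda\modu_{\ge p}/A_\Lambda\modu_{>q}$ is equivalent to $\mathrm{Rep}((Q_{[p,q]}, I)/\Lambda)$, which is the module category over the finite-dimensional algebra $\mathbb{C}[Q_{[p,q]}]/I \otimes_\mathbb{C} \Lambda$; its bounded global dimension truncates the $[p,q]$-generated part of the resolution after finitely many steps, with the residual higher syzygies absorbed into $A_\Lambda\modu_{>q}$, so $F^\bullet_{[p,q]}$ can indeed be taken bounded.
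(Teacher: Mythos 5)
Your semiorthogonality computations are fine in both cases, and so is the generation argument for (\ref{SOD1}) (the degree filtration of $X/X_{>q}$) and the observation that the $[p,q]$-generated part of a graded free resolution is a subcomplex. The gap is in the last step, where you need $F^\bullet_{[p,q]}$ to be bounded: you justify this by claiming that $\left(\mathbb{C}[Q_{[p,q]}]/I\right)\otimes_\mathbb{C}\Lambda$ has finite global dimension, and that is false in general. While $\mathbb{C}[Q_{[p,q]}]/I$ itself has finite global dimension (the quiver is acyclic), tensoring with $\Lambda$ destroys this as soon as $\Lambda$ does not have finite global dimension, which is the typical case here ($\nN$ contains, e.g., $\mathbb{C}[t]/t^2$ and all Artinian local algebras; modules concentrated at a single vertex recover the homological algebra of $\Lambda$ itself). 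Concretely, for $\Lambda=\mathbb{C}[t]/t^2$ the module $\mathbb{C}(-p)\otimes_\mathbb{C}(\Lambda/t)$ has a minimal graded free resolution over $A_\Lambda$ whose generators sit in degrees $p$ and $p+1$ in every homological degree (tensor the Koszul-type resolution in the $A$-direction with the periodic resolution of $\mathbb{C}$ over $\Lambda$), so the syzygies never become concentrated in degrees $>q$ and $F^\bullet_{[p,q]}$ cannot be made bounded this way. Even for $\Lambda=\mathbb{C}$ the homogeneous coordinate ring $A$ is in general far from regular, so the passage from finite global dimension of the quotient category to a truncation of an absolute free resolution over $A_\Lambda$ would in any case need an argument you do not supply.

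The statement is nevertheless true, and the intended proof (the paper simply cites Orlov's Lemma~2.3 and notes the same argument works over any $\Lambda\in\nN$) avoids resolutions entirely: for $X\in A_\Lambda\modu_{\ge p}$ one peels off one degree at a time, using the canonical morphism $A(-i)\otimes_\mathbb{C}X_i\to X$ (where $A(-i)\otimes_\mathbb{C}X_i\cong A_\Lambda(-i)\otimes_\Lambda X_i$), which is an isomorphism in degree $i$ with kernel and cokernel supported in degrees $\ge i+1$; iterating from $i=p$ to $i=q$ terminates after $q-p+1$ steps with a remainder in $D^b(A_\Lambda\modu_{>q})$, with no global-dimension input at all. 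This is in effect a \emph{relative} (over $\Lambda$) resolution in the quiver direction only, and it is exactly why $\pP_{[p,q]}$ and $\sS_{[p,q]}$ are defined using arbitrary $M\in\Lambda\modu$ rather than free $\Lambda$-modules. If you replace your final paragraph by this peeling argument (or by an explicit relative-projective-resolution argument), the rest of your proof goes through.
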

\begin{proof}
If $\Lambda=\mathbb{C}$, 
the result 
is proved in~\cite[Lemma~2.3]{Orsin}. 
Indeed the same argument works 
for any $\Lambda \in \nN$
to prove (\ref{SOD1}) and (\ref{SOD2}). 
\end{proof}
Note that the standard 
t-structure 
on $D^b (A_{\Lambda} \modu_{\ge p})$ restricts 
to the t-structure on $\sS_{[p, q]}$ whose 
heart is $A_{\Lambda}\modu_{[p, q]}$. 
Let $\dR \Gamma_{[p, q]}$ be the composition
\begin{align}\label{RGpq}
\dR \Gamma_{[p, q]} \colon 
D^b (\Coh(X_{\Lambda})) \stackrel{\dR \Gamma_{\ge p}}{\to}
D^b (A_{\Lambda} \modu_{\ge p}) \stackrel{pr_S}{\twoheadrightarrow} 
\sS_{[p, q]}. 
\end{align}
Here $pr_S$ is the projection with respect to the 
decomposition (\ref{SOD1}). 
By taking the zero-th cohomology, we obtain 
the functor
\begin{align}\label{Gpq1}
\Gamma_{[p, q]}\cneq 
\hH^0 \dR \Gamma_{[p, q]} \colon 
\Coh(X_{\Lambda}) \to A_{\Lambda} \modu_{[p, q]}
\end{align}
which coincides with (\ref{GpqF}).  

We describe the left adjoint of (\ref{RGpq}). 
Let $\dL \wp$ be the composition
\begin{align*}
\dL \wp \colon 
\sS_{[p, q]} \stackrel{i}{\hookrightarrow} 
&D^b(A_{\Lambda} \modu_{\ge p}) 
\stackrel{pr_P}{\twoheadrightarrow} \pP_{[p, q]} \\
&\stackrel{j}{\hookrightarrow} D^b(A_{\Lambda} \modu_{\ge p})
\stackrel{\pi}{\twoheadrightarrow} D^b (\Coh(X_{\Lambda})).  
\end{align*}
Here $i$, $j$ are the natural embeddings, 
and $pr_P$ is the projection with respect to  
the decomposition (\ref{SOD2}).
\begin{lem}\label{lem:adjoint}
$\dL \wp$ is the left adjoint of 
$\dR \Gamma_{[p, q]}$. 
\end{lem}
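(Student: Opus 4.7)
The plan is to verify the natural isomorphism
\begin{align*}
\Hom_{D^b(\Coh(X_\Lambda))}(\dL \wp(S), F) \cong \Hom_{\sS_{[p,q]}}(S, \dR \Gamma_{[p,q]}(F))
\end{align*}
by chaining three adjunctions: the already-noted $\pi \dashv \dR \Gamma_{\ge p}$, together with the two adjunctions produced by (\ref{SOD1}) and (\ref{SOD2}). The first task is to pin down the correct adjoint sides. A simple weight computation on the generators $\mathbb{C}(-i) \otimes M$ and $A(-i) \otimes M$ shows $\Hom^{*}(D^b(A_\Lambda \modu_{>q}), \sS_{[p,q]}) = 0$ and $\Hom^{*}(\pP_{[p,q]}, D^b(A_\Lambda \modu_{>q})) = 0$. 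Consequently, (\ref{SOD1}) produces for each $N$ a canonical triangle $U \to N \to pr_S(N)$ with $U \in D^b(A_\Lambda \modu_{>q})$ and identifies $pr_S$ as the \emph{left} adjoint of $i$, while (\ref{SOD2}) produces a canonical triangle $j(pr_P(N)) \to N \to V$ with $V \in D^b(A_\Lambda \modu_{>q})$ and identifies $pr_P$ as the \emph{right} adjoint of $j$.

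Applying $\pi \dashv \dR \Gamma_{\ge p}$ and then $j \dashv pr_P$, I would rewrite
\begin{align*}
\Hom_{D^b(\Coh(X_\Lambda))}(\dL \wp(S), F)
\cong \Hom(j(pr_P(i(S))), \dR \Gamma_{\ge p}(F))
\cong \Hom_{\pP_{[p,q]}}(pr_P(i(S)), pr_P(\dR \Gamma_{\ge p}(F))).
\end{align*}
The remaining step is to identify the right-hand side with $\Hom_{\sS_{[p,q]}}(S, pr_S(\dR \Gamma_{\ge p}(F)))$. For this I will exploit that $\sS_{[p,q]}$ and $\pP_{[p,q]}$ are mutations of one another across the common subcategory $D^b(A_\Lambda \modu_{>q})$. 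Concretely, applying $pr_S$ to the (\ref{SOD2})-triangle of $i(S)$ and using that $pr_S$ vanishes on $D^b(A_\Lambda \modu_{>q})$ (which follows from the trivial (\ref{SOD1})-triangle of any object in $D^b(A_\Lambda \modu_{>q})$) yields $pr_S \circ j \circ pr_P \circ i \cong \id_{\sS_{[p,q]}}$; the symmetric argument with the roles of the two decompositions interchanged yields $pr_P \circ i \circ pr_S \circ j \cong \id_{\pP_{[p,q]}}$. Hence $pr_P \circ i$ and $pr_S \circ j$ are inverse equivalences between $\sS_{[p,q]}$ and $\pP_{[p,q]}$. The same vanishing applied to a general $N \in D^b(A_\Lambda \modu_{\ge p})$ in place of $i(S)$ upgrades this to a functorial isomorphism $pr_S \circ j \circ pr_P \cong pr_S$ on all of $D^b(A_\Lambda \modu_{\ge p})$.

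Transporting across the equivalence $pr_S \circ j$ and then specializing $pr_S \circ j \circ pr_P \cong pr_S$ to $N = \dR \Gamma_{\ge p}(F)$ now gives
\begin{align*}
\Hom_{\pP_{[p,q]}}(pr_P(i(S)), pr_P(\dR \Gamma_{\ge p}(F)))
\cong \Hom_{\sS_{[p,q]}}(S, pr_S(\dR \Gamma_{\ge p}(F)))
= \Hom_{\sS_{[p,q]}}(S, \dR \Gamma_{[p,q]}(F)),
\end{align*}
which is the desired natural isomorphism. The delicate point will be to confirm the triangle conventions and adjoint sides for the two semi-orthogonal decompositions, and in particular to verify that $pr_S$ and $pr_P$ as defined in the text really are the left/right adjoints arising from (\ref{SOD1}), (\ref{SOD2}); once this bookkeeping is settled, the remainder is a formal diagram chase built on the three adjunctions $\pi \dashv \dR \Gamma_{\ge p}$, $pr_S \dashv i$, and $j \dashv pr_P$.
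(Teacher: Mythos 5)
Your proposal is correct and is essentially the paper's argument: both rest on the same two semiorthogonal decompositions (\ref{SOD1}), (\ref{SOD2}), the adjunction $\pi \dashv \dR\Gamma_{\ge p}$, and the vanishing of Hom's against $D^b(A_{\Lambda}\modu_{>q})$ applied to the canonical decomposition triangles. The only difference is organizational: the paper checks directly that $j\circ pr_P\circ i$ is left adjoint to $pr_S$ via the two triangle computations (\ref{ad1})--(\ref{ad2}), whereas you route the same computation through the mutation equivalence $pr_S\circ j\colon \pP_{[p,q]}\to \sS_{[p,q]}$ and the identity $pr_S\circ j\circ pr_P\cong pr_S$, which is a valid repackaging of the same facts.
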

\begin{proof}
Since 
the functor $\pi$ in (\ref{pi:quot})
is the left adjoint of $\dR \Gamma_{\ge p}$ in 
(\ref{Ggep}),  
it is enough to show that the composition
\begin{align*}
\sS_{[p, q]} \stackrel{i}{\hookrightarrow} 
D^b(A_{\Lambda} \modu_{\ge p}) 
\stackrel{pr_P}{\twoheadrightarrow} \pP_{[p, q]} 
\stackrel{j}{\hookrightarrow} D^b(A_{\Lambda} \modu_{\ge p})
\end{align*}
 is the left 
adjoint of 
$pr_S \colon D^b (A_{\Lambda} \modu_{\ge p}) 
\twoheadrightarrow \sS_{[p, q]}$. 
We take 
$E \in \sS_{[p, q]}$ and 
$F \in D^b(A_{\Lambda} \modu_{\ge p})$. 
By the
decomposition (\ref{SOD2}), 
we have the distinguished triangle
\begin{align*}
j \circ pr_P \circ i(E) \to i(E) \to E'
\end{align*}
for some $E' \in D^b(A_{\Lambda} \modu_{>q})$. 
Applying $\Hom(-, i \circ pr_S(F))$ to the 
above triangle, the decomposition (\ref{SOD1}) shows 
that
\begin{align}\label{ad1}
\Hom(E, pr_S(F)) \stackrel{\cong}{\to}
\Hom(j\circ pr_P \circ i(E), i \circ pr_S(F)). 
\end{align}
By the decomposition (\ref{SOD1}), we also have the distinguished triangle
\begin{align*}
F' \to F \to i \circ pr_S(F)
\end{align*}
for some $F' \in D^b(A_{\Lambda} \modu_{>q})$. 
Applying $\Hom(j \circ pr_P \circ i(E), -)$
to the above triangle, we have the 
isomorphism
\begin{align}\label{ad2}
\Hom(j \circ pr_P \circ i(E), F) \stackrel{\cong}{\to}
\Hom(j\circ pr_P \circ i(E), i \circ pr_S(F)). 
\end{align}
The isomorphisms (\ref{ad1}), (\ref{ad2})
show that $j \circ pr_p \circ i$ is the 
left adjoint of $pr_S$. 
\end{proof}
\begin{lem}\label{lem:t-ex}
The functor $\dL \wp$ is 
right t-exact, i.e. 
\begin{align*}
\dL \wp \left(A_{\Lambda} \modu_{[p, q]} \right)
\subset D^{\le 0} (\Coh(X_{\Lambda})).
\end{align*}
\end{lem}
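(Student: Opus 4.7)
The plan is to use the adjunction $\dL\wp \dashv \dR\Gamma_{[p, q]}$ from Lemma \ref{lem:adjoint} together with the general fact that, for an adjunction $L \dashv R$ between triangulated categories with t-structures, $L$ is right t-exact if and only if $R$ is left t-exact. Hence it suffices to show that $\dR\Gamma_{[p, q]} = pr_S \circ \dR\Gamma_{\ge p}$ carries $\Coh(X_\Lambda)$ into $D^{\ge 0}(\sS_{[p, q]})$.

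The first factor $\dR\Gamma_{\ge p}$ is left t-exact because $\Gamma$ is left exact, so each summand $\dR\Gamma(\fF(i))$ lies in $D^{\ge 0}$ for $\fF \in \Coh(X_\Lambda)$. The main step is therefore to show that $pr_S$ is t-exact. For this I would identify $\sS_{[p, q]}$ with $D^b(A_\Lambda \modu_{[p, q]})$: the subcategory $A_\Lambda \modu_{[p, q]}$ is closed under extensions in $A_\Lambda \modu_{\ge p}$ (giving a fully faithful embedding of the derived categories), and conversely every $N \in A_\Lambda \modu_{[p, q]}$ admits a degree filtration whose subquotients are of the form $\mathbb{C}(-i) \otimes_\mathbb{C} N_i$, placing it in $\sS_{[p, q]}$. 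Thus the heart of $\sS_{[p, q]}$ is $A_\Lambda \modu_{[p, q]}$, and similarly the heart of $D^b(A_\Lambda \modu_{>q})$ is $A_\Lambda \modu_{>q}$.

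For any $X \in A_\Lambda \modu_{\ge p}$ the natural short exact sequence
\begin{align*}
0 \to X_{>q} \to X \to X^{\le q} \to 0
\end{align*}
(where $X_{>q}$ is a submodule because $A$ is non-negatively graded) has $X_{>q} \in A_\Lambda \modu_{>q}$ and $X^{\le q} \in A_\Lambda \modu_{[p, q]}$. The proof of Lemma \ref{lem:adjoint} records that the SOD triangle from (\ref{SOD1}) for $X$ takes the form $F' \to X \to pr_S(X)$ with $F' \in D^b(A_\Lambda \modu_{>q})$ and $pr_S(X) \in \sS_{[p, q]}$; by uniqueness of the SOD decomposition this coincides with the short exact sequence above, and $pr_S(X) = X^{\le q}$. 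Hence on the heart $pr_S$ is the exact truncation $X \mapsto X^{\le q}$, so its derived extension to $D^b(A_\Lambda \modu_{\ge p})$ is t-exact. Combining, $\dR\Gamma_{[p, q]}$ is left t-exact, and the adjunction yields the right t-exactness of $\dL\wp$.

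The main obstacle will be verifying that $pr_S$ restricted to the heart is genuinely given by the naive degree truncation; this amounts to recognizing the above short exact sequence as the SOD triangle of (\ref{SOD1}) via the uniqueness of such decompositions, after which everything is formal.
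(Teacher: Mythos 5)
Your proof is correct and takes essentially the same route as the paper: deduce right t-exactness of $\dL\wp$ from the adjunction of Lemma~\ref{lem:adjoint} once $\dR\Gamma_{[p,q]}$ is known to be left t-exact. The paper dismisses the latter with ``by the construction'', whereas you supply the verification (left t-exactness of $\dR\Gamma_{\ge p}$ plus t-exactness of $pr_S$, identified on the heart with the degree truncation $X\mapsto X^{\le q}$ via uniqueness of the decomposition triangle for (\ref{SOD1})) --- a correct elaboration of the same argument.
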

\begin{proof}
By the construction, 
the functor $\dR \Gamma_{[p, q]}$, 
is left t-exact, i.e. 
it takes $\Coh(X_{\Lambda})$
to 
$\sS^{\ge 0}_{[p, q]}$. 
Hence $\dL \wp$ is right t-exact
by Lemma~\ref{lem:adjoint}. 
\end{proof}
We define the following functor: 
\begin{align}\label{def:wp}
\wp \cneq \hH^0 \dL \wp
\colon A_{\Lambda} \modu_{[p, q]} \to
\Coh(X_{\Lambda}).
\end{align}
By Lemma~\ref{lem:adjoint}
and Lemma~\ref{lem:t-ex}, 
the above functor $\wp$
 is 
right exact, and gives the left 
adjoint functor of $\Gamma_{[p, q]}$ in (\ref{Gpq1}). 
\begin{prop}\label{inv:com}
The functor $\wp$ induces the 
1-morphism
\begin{align}\label{wp0}
\wp \colon 
\mathfrak{M}_{[p, q]}^{\n}|_{\cC om} \to \mathfrak{M}_{\alpha}^{\n}|_{\cC om}
\end{align}  
giving the inverse of (\ref{funct:isom}).
\end{prop}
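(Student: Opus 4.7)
The plan is to reduce the claim to a pointwise statement on closed fibers via derived base change, and then to promote the pointwise information back to families through a flatness argument. As a first step, I would verify that $\dL\wp$ is compatible with derived base change along any morphism $R\to R'$ in $\cC om$. Each building block of $\dL\wp$---the projection $pr_{P}$, the embedding $j$, and the quotient $\pi$---is computed in terms of the generating objects $A_{R}(-i)\otimes_{R}L$ of $\pP_{[p,q]}$ and their images $\oO_{X}(-i)\otimes_{\mathbb{C}}L$ under Serre's equivalence (\ref{Serre}); both are manifestly compatible with $-\dotimes_{R}R'$, so one obtains a natural isomorphism
\begin{align*}
R'\dotimes_{R}\dL\wp(M)\;\cong\;\dL\wp(R'\dotimes_{R}M),\qquad M\in D^{b}(A_{R}\modu_{[p,q]}).
\end{align*}

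Next, I would analyze $\dL\wp$ on closed points. For $\wW\in\mathfrak{M}_{[p,q]}(\Spec\mathbb{C})$, Theorem~\ref{thm:BFHR} gives $\wW=\Gamma_{[p,q]}(F)$ for a unique semistable sheaf $F$ with Hilbert polynomial $\alpha$, and by the choice of $p,q$ together with Lemma~\ref{lem:projective} we have $\dR\Gamma_{[p,q]}(F)=\Gamma_{[p,q]}(F)$ concentrated in degree zero. The counit of the adjunction in Lemma~\ref{lem:adjoint} supplies a natural map $\dL\wp(\wW)\to F$; the pointwise content of Theorem~\ref{thm:BFHR} is that this counit is an isomorphism in $D^{b}(\Coh X)$, so $\dL\wp(\wW)=F$ sits in degree zero. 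Combining with base change, for $\wW\in\mathfrak{M}_{[p,q]}^{\n}|_{\cC om}(R)$ and any closed point $t\in\Spec R$ we have
\begin{align*}
\kappa(t)\dotimes_{R}\dL\wp(\wW)\;\cong\;\dL\wp(\wW|_{t})\;=\;F_{t}
\end{align*}
concentrated in degree zero. By the derived local criterion of flatness, this pointwise vanishing of negative cohomology propagates: $\dL\wp(\wW)$ is itself concentrated in degree zero and $R$-flat, with fibers $F_{t}$. Hence $\wp(\wW)=\dL\wp(\wW)$ defines an object of $\mathfrak{M}_{\alpha}^{\n}|_{\cC om}(R)$, producing the required 1-morphism (\ref{wp0}).

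Finally, to identify $\wp$ as the inverse of $\Gamma_{[p,q]}$, the counit furnishes a natural transformation $\wp\circ \Gamma_{[p,q]}\to\id$ of endofunctors on $\mathfrak{M}_{\alpha}^{\n}|_{\cC om}$, which by the above is fiberwise an isomorphism of $R$-flat coherent sheaves and hence a global isomorphism. Since $\Gamma_{[p,q]}$ is already known from Theorem~\ref{thm:BFHR} to be an isomorphism of stacks on $\cC om$, this automatically forces $\Gamma_{[p,q]}\circ\wp\cong\id$, so $\wp$ is the inverse of (\ref{funct:isom}). The main obstacle is the cohomological concentration and flatness step: promoting the pointwise statement $\dL\wp(\wW|_{t})\in\Coh(X_{t})$ to the global statement $\dL\wp(\wW)\in\Coh(X_{R})$ demands careful use of derived base change together with a Nakayama-type argument applied to the fiberwise cohomology sheaves.
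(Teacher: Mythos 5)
Your overall strategy (pointwise statement at closed fibers, derived base change, then a flatness argument to promote it to families) is reasonable and in fact parallels the machinery the paper develops later for the non-commutative case (Lemma~\ref{lem:prepare}, Lemma~\ref{lem:vanish}, Proposition~\ref{prop:isom}). However, your key input is not justified. You assert that ``the pointwise content of Theorem~\ref{thm:BFHR}'' is that the counit $\dL\wp\circ\dR\Gamma_{[p,q]}(F)\to F$ is an isomorphism in $D^b(\Coh X)$. Theorem~\ref{thm:BFHR} only says that $\fF\mapsto\Gamma_{[p,q]}(\fF)$ is an open immersion of stacks; full faithfulness there concerns isomorphisms between semistable sheaves and does not formally imply anything about the counit of the adjunction $(\dL\wp,\dR\Gamma_{[p,q]})$, since that counit compares $\wp(\Gamma_{[p,q]}(F))$ with $F$ through $\Hom$'s into arbitrary coherent sheaves, where the truncation to degrees $[p,q]$ genuinely loses information. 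This counit isomorphism is precisely the non-trivial input the paper imports: its proof of Proposition~\ref{inv:com} identifies $\wp$ with the left adjoint of \cite[Proposition~3.1]{BFHR} by uniqueness of adjoints and then quotes \cite[Proposition~3.2]{BFHR}, which states that the adjunction $\wp\circ\Gamma_{[p,q]}\to\id$ is an isomorphism on $\mathfrak{M}_{\alpha}$; alternatively one can prove it directly as in Lemma~\ref{lem:vanish} via the $\Ext^{\le 2}$ comparison of \cite[Proposition~4.3.4]{FoKa}. Without one of these inputs your argument does not start.

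A second, related overreach: even granting the correct input, what is available is that the counit is an isomorphism on $\hH^0$ together with $\hH^{-1}(\dL\wp\circ\Gamma_{[p,q]}(F))=0$; the comparison of $\Ext$-groups only holds up to degree $2$, so your stronger claim that $\dL\wp(\wW)$ at a closed point is concentrated in degree zero is unjustified (and is not needed). Consequently the ``derived local criterion of flatness'' as you state it cannot be applied verbatim. It can be repaired: right t-exactness of $\dL\wp$ plus the fiberwise $\hH^{-1}$-vanishing, fed into the spectral sequence
\begin{align*}
E_2^{p,q}=\tT or^{R}_{-p}\bigl(\kappa(t), \hH^{q}(\dL\wp(\wW))\bigr)\Rightarrow \hH^{p+q}\bigl(\dL\wp(\wW|_t)\bigr),
\end{align*}
gives $\tT or_1^R(\kappa(t),\wp(\wW))=0$ at all closed points and hence flatness of $\wp(\wW)$, exactly as in the induction step of Proposition~\ref{prop:isom}. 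With these two corrections your route becomes a valid, more self-contained alternative to the paper's proof, which is essentially a citation of \cite[Propositions~3.1--3.2]{BFHR}.
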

\begin{proof}
Suppose that $\Lambda \in \nN$ is commutative. 
Since
the functor $\wp$ in (\ref{def:wp})
is the left adjoint functor of
$\Gamma_{[p, q]}$
in (\ref{Gpq1}), 
it 
coincides 
with the
left adjoint of 
$\Gamma_{[p, q]}$
constructed in~\cite[Proposition~3.1]{BFHR}.
By~\cite[Proposition~3.2]{BFHR},
the adjunction $\wp \circ \Gamma_{[p, q]} \to \id$
is an isomorphism on $\mathfrak{M}_{\alpha}$, 
hence $\wp$ gives the inverse of (\ref{funct:isom}).   
\end{proof}

\subsection{The inverse transform}
The purpose here 
is to show that $\wp$ in (\ref{wp0}) 
extends to
give the inverse of (\ref{extend}).
We prepare some lemmas. 
 \begin{lem}\label{lem:prepare}
For $\Lambda \in \nN$, let 
$M$ 
be a $\Lambda$ bi-module. 
Then 
the functors $\dR \Gamma_{[p, q]}$ and 
$\dL \wp$ commute with 
$M \dotimes_{\Lambda}-$. 
\end{lem}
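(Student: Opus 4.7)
The plan is to reduce the lemma to three sub-commutations following the decompositions
\begin{align*}
\dR\Gamma_{[p,q]} = pr_S \circ \dR\Gamma_{\ge p}, \qquad
\dL\wp = \pi \circ j \circ pr_P \circ i,
\end{align*}
treating each constituent functor in turn. At each stage I would verify the commutation with $M \dotimes_{\Lambda} -$ separately, and then splice them together.

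First I would establish the projection formula
$\dR\Gamma_{\ge p}(M \dotimes_{\Lambda} E) \cong M \dotimes_{\Lambda} \dR\Gamma_{\ge p}(E)$
for $E \in D^b(\Coh(X_{\Lambda}))$. This is the analogue, in our NC setting, of the classical projection formula for the affine morphism $X_{\Lambda} \to \Spf \Lambda$: resolve $M$ by a complex of free right $\Lambda$-modules, use that tensoring by a free module commutes with $\dR\Gamma$, and appeal to the boundedness of $\dR\Gamma$ on the projective scheme $X$ to pass to the limit. In the relevant $[p,q]$-range, Lemma~\ref{lem:projective} already tells us that $\dR\Gamma(E(i)) = \Gamma(E(i))$ is concentrated in degree zero and flat over $\Lambda$, which keeps the spectral sequences degenerate and the argument clean.

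Next I would check that both semiorthogonal decompositions (\ref{SOD1}) and (\ref{SOD2}) are preserved by $M \dotimes_{\Lambda} -$. Since
\begin{align*}
M \dotimes_{\Lambda} \bigl( \mathbb{C}(-i) \otimes_{\mathbb{C}} N \bigr)
\cong \mathbb{C}(-i) \otimes_{\mathbb{C}} \bigl( M \dotimes_{\Lambda} N \bigr),
\qquad
M \dotimes_{\Lambda} \bigl( A(-i) \otimes_{\mathbb{C}} N \bigr)
\cong A(-i) \otimes_{\mathbb{C}} \bigl( M \dotimes_{\Lambda} N \bigr),
\end{align*}
the generating objects of $\sS_{[p,q]}$ and of $\pP_{[p,q]}$ are taken into the same subcategories, while $D^b(A_{\Lambda}\modu_{>q})$ is stable under tensoring as the grading is preserved. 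Consequently the two projections $pr_S$ and $pr_P$ commute (up to canonical natural isomorphism) with $M \dotimes_{\Lambda} -$, as does the inclusion $j$.

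Finally, the quotient functor $\pi$ of (\ref{pi:quot}) is a Verdier localization at the thick subcategory $A_{\Lambda}\modu_{\mathrm{tor}}$, which is itself stable under $M \dotimes_{\Lambda} -$ (again because tensoring does not change the grading support); alternatively, $\pi$ is a left adjoint and hence commutes with the colimit-like operation $M \dotimes_{\Lambda} -$. Assembling the pieces yields
\begin{align*}
\dR\Gamma_{[p,q]}(M \dotimes_{\Lambda} E) &\cong pr_S \, \dR\Gamma_{\ge p}(M \dotimes_{\Lambda} E)
\cong pr_S (M \dotimes_{\Lambda} \dR\Gamma_{\ge p}(E))
\cong M \dotimes_{\Lambda} \dR\Gamma_{[p,q]}(E),
\end{align*}
and the analogous chain of isomorphisms for $\dL\wp = \pi \circ j \circ pr_P \circ i$. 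The main obstacle is the first step: the projection formula over a possibly non-commutative, non-flat base $\Lambda$ with bi-module coefficients $M$. Everywhere else the argument is essentially formal once the subcategories are shown to be stable, but securing the projection formula requires genuine work to track how a free resolution of $M$ interacts with derived pushforward along $X_{\Lambda} \to \Spf\Lambda$.
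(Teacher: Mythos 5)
Your argument is correct and follows essentially the same route as the paper: the paper's proof also reduces the first claim to the derived base change (projection formula) $M\dotimes_{\Lambda}\dR\Gamma(-)\cong \dR\Gamma(M\dotimes_{\Lambda}-)$ and the second to the fact that $M\dotimes_{\Lambda}-$ preserves the semiorthogonal decomposition (\ref{SOD2}), which you verify on generators just as implicitly intended there. (Only a cosmetic caveat: Lemma~\ref{lem:projective} concerns objects of $\mathfrak{M}_{\alpha}^{\n}(\Lambda)$, not arbitrary complexes, but your main argument does not rely on it.)
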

\begin{proof}
The commutativity of $\dR \Gamma_{[p, q]}$ and 
$M \dotimes_{\Lambda}-$ 
follows from the derived base change 
$M \dotimes_{\Lambda} \dR \Gamma(\fF) \cong 
\dR \Gamma(M \dotimes_{\Lambda} \fF)$
for $\fF \in D^b(\Coh(X_{\Lambda}))$. 
The commutativity of 
$\dL \wp$ and $M \dotimes_{\Lambda}-$
follows from the construction of $\dL \wp$
and the fact that 
$M\dotimes_{\Lambda}-$ preserves the 
decomposition (\ref{SOD2}). 
\end{proof} 
\begin{lem}\label{lem:prepare2}
In the situation of Lemma~\ref{lem:prepare}, we have the following: 

(i) For $\fF \in \mathfrak{M}_{\alpha}^{\n}(\Lambda)$, 
we have $\Gamma_{[p, q]}(M \otimes_{\Lambda} \fF)
\cong M \otimes_{\Lambda} \Gamma_{[p, q]}(\fF)$.  

(ii) The functor $\wp$ in (\ref{def:wp}) commutes with $M \otimes_{\Lambda}-$. 
\end{lem}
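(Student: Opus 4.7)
The plan is to bootstrap both statements from the derived analogues in Lemma~\ref{lem:prepare}, using the flatness and cohomology vanishing assembled in Lemma~\ref{lem:projective} and the right $t$-exactness of Lemma~\ref{lem:t-ex} to descend from the derived category to the abelian level.

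For part (i), I would first apply Lemma~\ref{lem:prepare} to produce a quasi-isomorphism
\begin{align*}
\dR \Gamma_{[p,q]}(M\dotimes_{\Lambda}\fF) \cong M\dotimes_{\Lambda}\dR \Gamma_{[p,q]}(\fF).
\end{align*}
Because $\fF \in \mathfrak{M}_{\alpha}^{\n}(\Lambda)$ is flat over $\Lambda$, the derived tensor on the left collapses to $M\otimes_{\Lambda}\fF$. By our choice of $q \gg p \gg 0$ and Lemma~\ref{lem:projective}, for $i\in[p,q]$ we have $\dR\Gamma(\fF(i))=\Gamma(\fF(i))$, and the sum $\Gamma_{[p,q]}(\fF)$ is flat over $\Lambda$; hence $\dR\Gamma_{[p,q]}(\fF)=\Gamma_{[p,q]}(\fF)$ is a single module in degree zero and the right-hand derived tensor also collapses. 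Taking $\hH^0$ of the resulting isomorphism yields (i).

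For part (ii), I would apply Lemma~\ref{lem:prepare} in the adjoint direction to obtain
\begin{align*}
\dL \wp (M\dotimes_{\Lambda} N) \cong M \dotimes_{\Lambda} \dL \wp(N),
\end{align*}
where $N$ is (in the relevant application) a flat representation, so $M\dotimes_\Lambda N = M\otimes_\Lambda N$. By Lemma~\ref{lem:t-ex}, $\dL\wp(N) \in D^{\le 0}(\Coh(X_\Lambda))$, and since $M\dotimes_{\Lambda}-$ is right $t$-exact, the derived tensor on the right also lives in $D^{\le 0}$. The hyper-cohomology spectral sequence
\begin{align*}
E_2^{a,b} = \tT or_{-a}^{\Lambda}\!\bigl(M, \hH^{b}\dL\wp(N)\bigr) \Rightarrow \hH^{a+b}\bigl(M\dotimes_{\Lambda}\dL\wp(N)\bigr)
\end{align*}
has all terms in the third quadrant, so the only contribution in total degree zero is $E_2^{0,0}=M\otimes_{\Lambda}\wp(N)$. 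Applying $\hH^0$ and using $\wp = \hH^0\dL\wp$ then gives $\wp(M\otimes_{\Lambda}N) \cong M\otimes_{\Lambda}\wp(N)$.

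The main obstacle is ensuring that the two derived identities of Lemma~\ref{lem:prepare} actually descend to statements about the underived functors. This is not automatic: it requires the combination of the cohomology vanishing in the window $[p,q]$, the flatness of $\fF$ (and of $\Gamma_{[p,q]}(\fF)$) over $\Lambda$ from Lemma~\ref{lem:projective}, and the right $t$-exactness of $\dL\wp$. One must also track that $\fF^{ab}$ lies in $\mathfrak{M}_{\alpha}$ so that the vanishing of $\dR^{>0}\Gamma(\fF^{ab}(i))$ for $i\in[p,q]$ is available to feed into the projection formula and base change used in Lemma~\ref{lem:prepare}.
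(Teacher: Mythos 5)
Your proposal is correct and follows exactly the route the paper takes: its proof of this lemma is the one-line observation that (i) follows from Lemma~\ref{lem:projective} together with Lemma~\ref{lem:prepare}, and (ii) from Lemma~\ref{lem:t-ex} together with Lemma~\ref{lem:prepare}, which is precisely the descent-from-the-derived-level argument you spell out. The only cosmetic difference is that in (ii) the right $t$-exactness of $\dL\wp$ and of $M\dotimes_{\Lambda}-$ already gives $\hH^0$ of both sides for arbitrary $N$ in $A_{\Lambda}\modu_{[p,q]}$, so your restriction to flat $N$ is not needed.
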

\begin{proof}
(i) follows from Lemma~\ref{lem:projective} and Lemma~\ref{lem:prepare}, 
and (ii) follows from 
Lemma~\ref{lem:t-ex} and Lemma~\ref{lem:prepare}.
\end{proof}
\begin{lem}\label{lem:vanish}
By replacing $q \gg p \gg 0$ if necessary, 
for any $\Lambda \in \nN$ and 
$\fF \in \mathfrak{M}_{\alpha}^{\n}(\Lambda)$, 
the adjunction morphism
\begin{align*}
\wp \circ \Gamma_{[p, q]}(\fF) \to \fF
\end{align*}
is an isomorphism. 
Moreover we have  
$\hH^{-1}\left(\dL \wp \circ \Gamma_{[p, q]}(\fF)\right)=0$. 
\end{lem}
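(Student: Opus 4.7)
The plan is to proceed by induction on the degree of NC nilpotence of $\Lambda$, with the commutative case $\Lambda \in \cC om$ as the base. For any choice of $p,q$ satisfying the inductive step below, we also demand that the base case has been established with the given $p,q$; this forces a single enlargement of the pair $(p,q)$ governed entirely by the commutative case.

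For the base case, Proposition~\ref{inv:com} already gives that the adjunction $\wp \circ \Gamma_{[p,q]}(\fF) \to \fF$ is an isomorphism. The vanishing $\hH^{-1}(\dL\wp \circ \Gamma_{[p,q]}(\fF)) = 0$ can be checked fiberwise at closed points of $\Spec \Lambda^{ab}$: each fiber is a single stable sheaf on $X$ with Hilbert polynomial $\alpha$, and a Beilinson-type computation of $\dL\wp$ via a resolution in $\pP_{[p,q]}$ together with Serre vanishing shows $\dL\wp \circ \Gamma_{[p,q]}(\fF_t) \cong \fF_t$ concentrated in degree zero, for $q \gg p \gg 0$ depending only on $\alpha$. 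The boundedness of the family of stable sheaves with Hilbert polynomial $\alpha$ (cf.~\cite{Hu}) provides a uniform choice of $p, q$, and the statement then propagates from fibers to the whole family by flatness of $\fF$ over $\Lambda^{ab}$ together with Lemma~\ref{lem:prepare}.

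For the inductive step, suppose the lemma holds for all $\Lambda' \in \nN$ with smaller NC nilpotence degree. Given $\Lambda \in \nN$ with $F^{d+1}\Lambda=0$ and $F^d \Lambda \neq 0$, set $J = F^d \Lambda$ (a $\Lambda^{ab}$-module since $J^2=0$) and $\Lambda' = \Lambda^{\le d-1}$. The central extension $0 \to J \to \Lambda \to \Lambda' \to 0$ and flatness of $\fF$ yield a short exact sequence
\begin{align*}
0 \to J \otimes_{\Lambda^{ab}} \fF^{ab} \to \fF \to \Lambda' \otimes_{\Lambda} \fF \to 0
\end{align*}
in $\Coh(X_\Lambda)$. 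Applying $\Gamma_{[p,q]}$, which is exact here by Lemma~\ref{lem:projective}, and identifying $\Gamma_{[p,q]}(J \otimes_{\Lambda^{ab}} \fF^{ab})$ with $J \otimes_{\Lambda^{ab}} \Gamma_{[p,q]}(\fF^{ab})$ via Lemma~\ref{lem:prepare2}(i), then applying $\dL\wp$, gives a distinguished triangle
\begin{align*}
J \otimes_{\Lambda^{ab}} \dL\wp \circ \Gamma_{[p,q]}(\fF^{ab}) \to \dL\wp \circ \Gamma_{[p,q]}(\fF) \to \dL\wp \circ \Gamma_{[p,q]}(\Lambda' \otimes_\Lambda \fF)
\end{align*}
in $D^b(\Coh(X_\Lambda))$, where the outer terms are rewritten using Lemma~\ref{lem:prepare} applied to $J$ viewed as a $\Lambda$-bimodule via $\Lambda \twoheadrightarrow \Lambda^{ab}$ (and using flatness of $\fF^{ab}$, $\Gamma_{[p,q]}(\fF^{ab})$ over $\Lambda^{ab}$ to identify derived and ordinary tensor products). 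By the base case applied to $\fF^{ab}$ and the induction hypothesis applied to $\Lambda' \otimes_\Lambda \fF \in \mathfrak{M}_\alpha^{\n}(\Lambda')$, both outer terms are concentrated in degree zero and recover $J \otimes_{\Lambda^{ab}} \fF^{ab}$ and $\Lambda' \otimes_\Lambda \fF$ respectively, compatibly with the adjunction maps. The long exact sequence of cohomology attached to the triangle, compared against the original short exact sequence via the adjunction (essentially a five-lemma argument), yields both $\hH^{-1}(\dL\wp \circ \Gamma_{[p,q]}(\fF)) = 0$ and the adjunction isomorphism in degree zero.

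The main obstacle is the base case, specifically the uniform derived vanishing $\hH^{-1}(\dL\wp \circ \Gamma_{[p,q]}(\fF))=0$ over the whole moduli stack $\mathfrak{M}_\alpha$. The inductive step is a clean homological argument that does not improve the vanishing range, so the choice of $p,q$ must already suffice at the commutative level; this is what justifies the phrase ``by replacing $q \gg p \gg 0$ if necessary'' in the statement and ultimately relies on a uniform Castelnuovo–Mumford regularity bound for the bounded family of stable sheaves with Hilbert polynomial $\alpha$.
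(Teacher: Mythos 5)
The overall dévissage is reasonable, but the heart of the lemma is missing. Your base case rests on the claim that for a single stable sheaf $F=\fF_t$ one has $\dL\wp\circ\dR\Gamma_{[p,q]}(F)\cong F$ concentrated in degree zero, asserted via ``a Beilinson-type computation \dots together with Serre vanishing''. This is both unproved and stronger than what is true: for a finite window $[p,q]$ the derived adjunction counit need not be an isomorphism, and the paper only establishes (and only needs) that the cone $\gG$ of $\dL\wp\circ\dR\Gamma_{[p,q]}(\fF)\to\fF$ satisfies $\hH^i(\gG)=0$ for $i\ge -1$. The actual mechanism is to apply $\dR\Hom_{X_\Lambda}(-,\oO_z)$ to this triangle for every closed point $z=(x,y)$ of $X\times\Spec\Lambda^{ab}$, use flatness to rewrite $\dR\Hom_{X_\Lambda}(\fF,\oO_z)=\dR\Hom_X(\fF^{ab}_y,\oO_x)$ and Lemma~\ref{lem:prepare} to rewrite the middle term, and then invoke the comparison $\Ext^i_X(\fF^{ab}_y,\oO_x)\cong\Ext^i_{A\gr}(\Gamma_{[p,q]}(\fF^{ab}_y),\Gamma_{[p,q]}(\oO_x))$ for $i\le 2$, valid for $q\gg p\gg 0$ uniformly over the bounded family by~\cite[Proposition~4.3.4]{FoKa}. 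This yields $\Hom(\gG,\oO_z[i])=0$ for $i\le 1$ and hence $\hH^i(\gG)=0$ for $i\ge -1$, which is exactly the assertion of the lemma. Without this (or an equivalent) $\Ext$-comparison input, your ``Beilinson-type computation'' is precisely the content to be proved; the appeal to Castelnuovo--Mumford regularity and boundedness explains why a uniform choice of $p,q$ can exist, but not why $\hH^{-1}$ vanishes.

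A secondary remark: the induction on NC nilpotence is unnecessary here. The skyscraper test above works verbatim for arbitrary $\Lambda\in\nN$, since $\oO_z$ only sees the abelianization, and the paper treats all $\Lambda$ at once. Your inductive step (the triangle obtained from $0\to J\otimes_{\Lambda^{ab}}\fF^{ab}\to\fF\to\fF^{\le d-1}\to 0$ plus a five-lemma comparison) is essentially the argument the paper deploys later, in Proposition~\ref{prop:isom}, to deduce flatness of $\wp(\wW)$ and the adjunction isomorphism from the present lemma; it is sound as far as it goes, but it cannot substitute for the missing base case.
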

\begin{proof}
We take the cone of the adjunction morphism 
in $D^b(\Coh(X_{\Lambda}))$ 
\begin{align}\label{distt1}
\gG \to \dL \wp \circ \dR \Gamma_{[p, q]}(\fF) \to 
\fF. 
\end{align}
For a closed point 
$z=(x, y) \in X \times \Spec \Lambda^{ab}$, 
we regard its structure sheaf $\oO_z$
as an object of $\Coh(X_{\Lambda})$.
Note that  we have
\begin{align*}
\dR \Gamma_{[p, q]}(\fF)=\Gamma_{[p, q]}(\fF), \ 
\dR \Gamma_{[p, q]}(\oO_z)=\Gamma_{[p, q]}(\oO_z)
\end{align*}
by the proof of Lemma~\ref{lem:projective}. 
Hence applying $\dR \Hom_{X_{\Lambda}}(-, \oO_z)$ to the 
triangle (\ref{distt1}), 
we obtain the distinguished triangle 
\begin{align}\notag
\dR \Hom_{X_{\Lambda}}(\fF, &\oO_z) \to \\
&\dR \Hom_{A_{\Lambda}\rm{gr}}(\Gamma_{[p, q]}(\fF), 
\Gamma_{[p, q]}(\oO_z)) 
\label{distt2}
\to \dR \Hom_{X_{\Lambda}}(\gG, \oO_z). 
\end{align}
Since $\fF$ is flat over $\Lambda$, we have
\begin{align*}
\dR \Hom_{X_{\Lambda}}(\fF, \oO_z)=\dR \Hom_{X}(\fF^{ab}_y, \oO_x). 
\end{align*}
Here we 
have set $\fF^{ab}_y \cneq \fF^{ab}|_{X \times \{y\}}$. 
Also using Lemma~\ref{lem:prepare}, we have 
\begin{align*}
\dR \Hom_{A_{\Lambda} \gr}(\Gamma_{[p, q]}(\fF), 
\Gamma_{[p, q]}(\oO_z))
=\dR \Hom_{A \gr}(\Gamma_{[p, q]}(\fF^{ab}_{y}), 
\Gamma_{[p, q]}(\oO_x)). 
\end{align*}
As $\fF_y^{ab}$ corresponds to a closed point of $M_{\alpha}$, 
for $q \gg p \gg 0$ which are independent of 
$\fF$ and $(x, y)$, we have the isomorphisms
\begin{align*}
\Ext_X^i(\fF^{ab}_y, \oO_x) \stackrel{\cong}{\to}
\Ext_{A \gr}^i(\Gamma_{[p, q]}(\fF^{ab}_{y}), 
\Gamma_{[p, q]}(\oO_x))
\end{align*}
for $i\le 2$ by~\cite[Proposition~4.3.4]{FoKa}. 
Applying the above isomorphisms
to the triangle (\ref{distt2}), we obtain
\begin{align*}
\Hom_{X_{\Lambda}}(\gG, \oO_z[i])=0, \ 
i\le 1, \ z \in X \times \Spec \Lambda^{ab}
\end{align*}
which implies that 
$\hH^i(\gG)=0$ for $i\ge -1$. 
By
taking the long exact sequence of 
cohomologies 
associated to (\ref{distt1}), we 
obtain the desired result.  
\end{proof}

Now we show the following proposition: 
\begin{prop}\label{prop:isom}
The functor
$\wp$ induces the 1-morphism
\begin{align}\label{wpp}
\wp \colon \mathfrak{M}_{[p, q]}^{\n} \to 
\mathfrak{M}_{\alpha}^{\n}
\end{align}
giving the inverse of (\ref{extend}).
In particular, (\ref{extend}) is an isomorphism. 
\end{prop}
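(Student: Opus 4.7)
The plan is to prove both required properties simultaneously by induction on the degree $d$ of NC nilpotence of $\Lambda$: (a) $\wp(\wW) \in \mathfrak{M}_\alpha^{\n}(\Lambda)$ --- in particular flat over $\Lambda$ with $\wp(\wW)^{ab}$ a point of $\mathfrak{M}_\alpha$ --- and (b) the unit of adjunction $\wW \to \Gamma_{[p,q]}\wp(\wW)$ is an isomorphism. The base case $d=0$ (i.e.\ $\Lambda$ commutative) is precisely Proposition~\ref{inv:com}. The opposite direction $\wp \circ \Gamma_{[p,q]} \cong \id$ on $\mathfrak{M}_\alpha^{\n}$ is already provided by Lemma~\ref{lem:vanish}, so establishing (a) and (b) completes the proof that $\wp$ inverts (\ref{extend}).

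For the inductive step from $d-1$ to $d$, the key input is the central extension
\begin{align*}
0 \to J \to \Lambda^{\le d} \to \Lambda^{\le d-1} \to 0, \quad J \cneq F^d\Lambda/F^{d+1}\Lambda,
\end{align*}
in which $J$ is a $\Lambda^{ab}$-module. Flatness of $\wW$ over $\Lambda$ yields the short exact sequence $0 \to J \otimes_{\Lambda^{ab}} \wW^{ab} \to \wW \to \wW^{\le d-1} \to 0$. Applying $\dL\wp$ and invoking Lemma~\ref{lem:prepare} to commute $\dL\wp$ with $J \otimes_{\Lambda^{ab}}-$ produces a distinguished triangle in $D^b(\Coh(X_\Lambda))$. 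By the inductive hypothesis combined with Proposition~\ref{inv:com}, both $\dL\wp(\wW^{ab})$ and $\dL\wp(\wW^{\le d-1})$ are concentrated in degree zero and give objects flat over $\Lambda^{ab}$ and $\Lambda^{\le d-1}$ respectively. The long exact sequence of cohomology then forces $\hH^{-1}(\dL\wp(\wW))=0$ and extracts
\begin{align*}
0 \to J \otimes_{\Lambda^{ab}} \wp(\wW^{ab}) \to \wp(\wW) \to \wp(\wW^{\le d-1}) \to 0.
\end{align*}

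I expect the main technical hurdle to be flatness of $\wp(\wW)$ over $\Lambda$. The strategy is to invoke the standard criterion under a square-zero central extension $\Lambda^{\le d} \twoheadrightarrow \Lambda^{\le d-1}$ with kernel $J$: a module $\fF$ is $\Lambda$-flat provided $\fF^{\le d-1}$ is $\Lambda^{\le d-1}$-flat and the canonical map $J \otimes_{\Lambda^{ab}}\fF^{ab} \to \fF$ is injective. Lemma~\ref{lem:prepare2}~(ii) gives $\wp(\wW)^{\le d-1} \cong \wp(\wW^{\le d-1})$ (flat by induction) and $\wp(\wW)^{ab} \cong \wp(\wW^{ab})$, whereupon the displayed SES verifies exactly this criterion. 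The fact that $\wp(\wW)^{ab}$ defines a point of $\mathfrak{M}_\alpha$ follows from Proposition~\ref{inv:com} applied to $\wW^{ab}$.

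Finally, for the unit $\wW \to \Gamma_{[p,q]}\wp(\wW)$: having established flatness of $\wp(\wW)$, Lemma~\ref{lem:projective} ensures that $\Gamma_{[p,q]}$ preserves the above short exact sequence, and Lemma~\ref{lem:prepare2}~(i) identifies the leftmost term with $J \otimes_{\Lambda^{ab}} \Gamma_{[p,q]}\wp(\wW^{ab})$. Comparing with the SES for $\wW$ itself via the unit map gives a commutative ladder whose outer vertical arrows are isomorphisms by Proposition~\ref{inv:com} and by induction, so the five lemma forces the middle arrow to be an isomorphism as well, completing the induction.
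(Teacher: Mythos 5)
Your proposal is correct and follows essentially the same route as the paper: induction on the NC nilpotence degree with Proposition~\ref{inv:com} as the base case, the short exact sequence coming from the central extension, Lemma~\ref{lem:prepare}/Lemma~\ref{lem:prepare2} to commute $\wp$ with the tensor operations, the vanishing $\hH^{-1}(\dL\wp(\wW^{\le d-1}))=0$ from the inductive hypothesis plus Lemma~\ref{lem:vanish}, the local flatness criterion for the square-zero extension (the paper phrases it as $\tT or_1^{\Lambda}(\Lambda^{\le d-1},\wp(\wW))=0$ via a spectral sequence, which is equivalent to your injectivity condition), and the five lemma for the unit map. One small overstatement: you claim $\dL\wp(\wW^{ab})$ and $\dL\wp(\wW^{\le d-1})$ are concentrated in degree zero, whereas Lemma~\ref{lem:vanish} only controls $\hH^{-1}$ (and right t-exactness controls positive degrees); fortunately $\hH^{-1}(\dL\wp(\wW^{\le d-1}))=0$ is all your long exact sequence actually requires.
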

\begin{proof}
It is enough to show that, for 
$\Lambda \in \nN$ and $\wW \in \mathfrak{M}_{[p, q]}^{\n}(\Lambda)$, 
the object $\wp(\wW) \in \Coh(X_{\Lambda})$ is flat over 
$\Lambda$
and the adjunction morphism
\begin{align}\label{adjun}
\wW \to \Gamma_{[p, q]} \circ \wp(\wW)
\end{align}
is an isomorphism. 
Indeed
by Proposition~\ref{inv:com}
and
Lemma~\ref{lem:prepare2} (ii), 
we have 
\begin{align*}
\wp(\wW)^{ab} \cong \wp(\wW^{ab}) \in 
\mathfrak{M}_{\alpha}(\Spec \Lambda^{ab}). 
\end{align*}
Therefore 
if $\wp(\wW)$ is flat over $\Lambda$, then 
the 
object $\wp(\wW)$ determines 
an object of $\mathfrak{M}_{\alpha}^{\n}(\Lambda)$, 
and the 1-morphism 
(\ref{wpp}) is well-defined. 
Moreover if the morphism (\ref{adjun}) 
is an isomorphism, 
then combined with Lemma~\ref{lem:vanish}, 
the 1-morphism 
(\ref{wpp}) gives the inverse of 
(\ref{extend}). 
Below we prove the 
flatness of $\wp(\wW)$ and the isomorphism 
(\ref{adjun}) 
 by the induction of the NC nilpotence 
of $\Lambda$. 
The first step of the induction 
is the case of $\Lambda \in \cC om$, 
which follows from Proposition~\ref{inv:com}. 

Suppose that $\Lambda \in \nN_d$. 
By the assumption of the induction, 
we may assume that 
$\wp(\wW^{\le d-1})$ is flat over $\Lambda^{\le d-1}$
and 
the morphism (\ref{adjun})
is an isomorphism for $\wW^{\le d-1}$.
By Lemma~\ref{lem:prepare}, 
we have
the isomorphism
\begin{align}\notag
\Lambda^{\le d-1} \dotimes_{\Lambda} \dL \wp(\wW)
& \cong \dL \wp(\wW^{\le d-1})
\end{align}
which yields the spectral sequence
\begin{align}\label{spect}
E_{2}^{p, q}=
\tT or_{-p}^{\Lambda}(\Lambda^{\le d-1}, \hH^{q}(\dL \wp(\wW)))
\Rightarrow \hH^{p+q}(\dL \wp(\wW^{\le d-1})). 
\end{align}
On the other hand, 
we have the isomorphism
\begin{align*}
\dL \wp(\wW^{\le d-1}) \cong \dL \wp \circ \Gamma_{[p, q]} \circ 
\wp(\wW^{\le d-1})
\end{align*}
by the assumption of the induction. 
Applying Lemma~\ref{lem:vanish} to 
the above isomorphism, 
we obtain the vanishing
\begin{align}\label{Pvanish}
\hH^{-1}(\dL \wp(\wW^{\le d-1}))=0. 
\end{align}
The spectral sequence (\ref{spect})
together with the vanishing (\ref{Pvanish})
show that 
\begin{align}\label{torvanish}
\tT or_1^{\Lambda}(\Lambda^{\le d-1}, \wp(\wW))=0.
\end{align}
By Lemma~\ref{lem:prepare2} (ii), 
we have  
$\wp(\wW)^{\le d-1} \cong \wp(\wW^{\le d-1})$, 
which is flat over $\Lambda^{\le d-1}$
by the induction assumption. 
Therefore the vanishing (\ref{torvanish})
shows that $\wp(\wW)$ is flat over $\Lambda$. 

It remains to show the isomorphism (\ref{adjun}). 
 Let $J$ be the kernel of 
$\Lambda \twoheadrightarrow \Lambda^{\le d-1}$, which 
is $\Lambda^{ab}$-module. 
We have the exact sequence
\begin{align*}
0 \to J \otimes_{\Lambda^{ab}}
\wW^{ab} \to \wW \to \wW^{\le d-1} \to 0. 
\end{align*}
We apply $\wp$ to the above sequence. 
Since $\wp$ is right exact, 
using the vanishing 
(\ref{Pvanish}) and Lemma~\ref{lem:prepare2} (ii), 
we obtain the exact sequence 
\begin{align*}
0 \to J \otimes_{\Lambda^{ab}}
\wp(\wW^{ab}) \to \wp(\wW) \to \wp(\wW^{\le d-1}) \to 0. 
\end{align*}
Then we apply $\Gamma_{[p, q]}$
to the above sequence. 
By Lemma~\ref{lem:projective}
and Lemma~\ref{lem:prepare2} (i), 
we also have the exact sequence
\begin{align*}
0 \to J \otimes_{\Lambda^{ab}}
\Gamma_{[p, q]}\circ 
\wp(\wW^{ab}) \to 
\Gamma_{[p, q]} \circ \wp(\wW) \to 
\Gamma_{[p, q]} \circ \wp(\wW^{\le d-1}) \to 0. 
\end{align*}
We have the commutative diagram
of exact sequences
\begin{align*}
\xymatrix{
J \otimes_{\Lambda^{ab}}
\wW^{ab} \ar[r]\ar[d]^{\cong} & \wW
 \ar[r]\ar[d] & \wW^{\le d-1} \ar[d]^{\cong}\\
J \otimes_{\Lambda^{ab}} 
\Gamma_{[p, q]}\circ 
\wp(\wW^{ab}) \ar[r] & \Gamma_{[p, q]} \circ 
\wp(\wW) \ar[r] & \Gamma_{[p, q]} \circ 
\wp(\wW^{\le d-1}). 
}
\end{align*}
Here the right and left vertical arrows are isomorphisms 
by the assumption of the induction. 
By the five lemma, the morphism (\ref{adjun}) 
is an isomorphism. 
\end{proof}

\subsection{Quasi NC structures on the moduli space of stable sheaves}
Let 
\begin{align}\label{mfunct}
\mM_{\alpha} \colon 
\sS ch/\mathbb{C} \to \sS et
\end{align}
be the functor 
defined 
by sending
a $\mathbb{C}$-scheme $T$ to the
equivalence 
classes
of objects $\fF \in \mathfrak{M}_{\alpha}(T)$, 
where 
$\fF$ and $\fF'$
are called \textit{equivalent} if there is an line bundle $\lL$ on $T$
such that $\fF \cong \fF' \otimes p_T^{\ast}\lL$. 
The moduli functor (\ref{mfunct}) is not always
representable by a scheme, but 
if we assume that 
\begin{align}\label{primitive}
\mathrm{g. c. d}\{\alpha(m) : m\in \mathbb{Z}\}=1
\end{align}
then (\ref{mfunct})
is represented by a projective scheme $M_{\alpha}$
(cf.~\cite{Mu2}).  
Below we call $\alpha$ satisfying the condition 
(\ref{primitive}) as \textit{primitive}. 
In this case, 
the 
stack $\mathfrak{M}_{\alpha}$
consists of stable sheaves, and is a trivial
$\mathbb{C}^{\ast}$-gerbe
over $M_{\alpha}$. 

Suppose that $\alpha$ is primitive, and 
take $q \gg p \gg 0$ as in Theorem~\ref{thm:BFHR}. 
Let
\begin{align*}
\eE \in \Coh(X \times M_{\alpha})
\end{align*}
be a universal sheaf. 
Applying $\Gamma_{[p, q]}$ to $\eE$, 
we obtain a family of $\theta$-stable 
representations of $(Q_{[p, q]}, I)$ over 
$M_{\alpha}$. 
Note that if $\alpha$ is primitive, then $\alpha_{[p, q]}$ is 
a primitive dimension vector for $q \gg p \gg 0$.
Let $M_{Q_{[p, q]}, I, \theta}(\alpha_{[p, q]})$ be the 
moduli space of representations of $(Q_{[p, q]}, I)$ 
with dimension vector $\alpha_{[p, q]}$, 
given in Theorem~\ref{thm:Kin}. 
By Theorem~\ref{thm:BFHR}, 
the functor $\Gamma_{[p, q]}$ induces the 
morphism
\begin{align}\label{Gpq}
\Upsilon \colon M_{\alpha} \to M_{Q_{[p, q]}, I, \theta}(\alpha_{[p, q]})
\end{align}
which is an open immersion. 
We denote by 
\begin{align*}
M_{[p, q]} \subset M_{Q_{[p, q]}, I, \theta}(\alpha_{[p, q]})
\end{align*}
the image of the morphism (\ref{Gpq}). 
The scheme $M_{[p, q]}$ is an open 
subscheme of $M_{Q_{[p, q]}, I, \theta}(\alpha_{[p, q]})$, 
such that we have the isomorphism
\begin{align*}
\Upsilon \colon M_{\alpha} \stackrel{\cong}{\to}
M_{[p, q]}. 
\end{align*}
\begin{rmk}
Since $Q_{p, q}$
does not contain a loop, 
the 
moduli scheme 
$M_{Q_{[p, q]}, I, \theta}(\alpha_{[p, q]})$
is projective. 
Hence 
$M_{[p, q]}$ consists of 
union of connected components of 
$M_{Q_{[p, q]}, I, \theta}(\alpha_{[p, q]})$
\emph{(cf.~\cite[Corollary~3.8]{BFHR})}. 
\end{rmk}
We define the functor 
\begin{align*}
h_{\alpha} \colon \nN \to \sS et
\end{align*}
by sending $\Lambda \in \nN$ to the 
set of 
isomorphism classes of triples
$(f, \fF, \psi)$: 
\begin{itemize}
\item $f$ is a morphism of schemes 
$f \colon \Spec \Lambda^{ab} \to M_{\alpha}$. 
\item $\fF$ is an object of $\Coh(X_{\Lambda})$
which is flat over $\Lambda$. 
\item $\psi$ is an isomorphism $\psi \colon \fF^{ab} 
\stackrel{\cong}{\to} f^{\ast}\eE$. 
\end{itemize}
An isomorphism $(f, \fF, \psi) \to (f', \fF', \psi')$
exists if $f=f'$, 
and there is an isomorphism $\fF \to \fF'$ in $\Coh(X_{\Lambda})$
commuting $\psi$, $\psi'$. 
We also define the following functor: 
\begin{align*}
h_{[p, q]} \cneq h_{Q_{[p, q]}, I, \theta}(\alpha_{[p, q]})|_{M_{[p, q]}} \colon \nN \to \sS et. 
\end{align*}
Here 
$h_{Q_{[p, q]}, I, \theta}(\alpha_{[p, q]})$
is introduced in (\ref{h:Nset}). 
\begin{prop}\label{prop:isom2}
The functor $\Gamma_{[p, q]}$ in (\ref{Gpq1}) induces 
the isomorphism of functors
\begin{align*}
\Gamma_{[p, q]} \colon 
h_{\alpha}\stackrel{\cong}{\to}
h_{[p, q]}. 
\end{align*}
\end{prop}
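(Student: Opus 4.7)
The plan is to define the natural transform $\Gamma_{[p,q]} \colon h_\alpha \to h_{[p,q]}$ by sending a triple $(f,\fF,\psi)$ to the triple $(\Upsilon\circ f,\Gamma_{[p,q]}(\fF),\psi')$, and then construct an inverse transform using the left adjoint $\wp$. The universal $(Q_{[p,q]},I)$-representation over $M_{[p,q]}$ is by construction $\vV=\Gamma_{[p,q]}(\eE)$, and the naturality of the adjunction will provide the needed isomorphisms.

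First I would verify that $(\Upsilon\circ f,\Gamma_{[p,q]}(\fF),\psi')$ lies in $h_{[p,q]}(\Lambda)$: flatness of $\Gamma_{[p,q]}(\fF)$ over $\Lambda$ and the canonical isomorphism $\Gamma_{[p,q]}(\fF)^{ab}\cong\Gamma_{[p,q]}(\fF^{ab})$ come from Lemma~\ref{lem:projective}, and by Lemma~\ref{lem:prepare2}(i) we have $\Gamma_{[p,q]}(f^\ast\eE)\cong f^\ast\Gamma_{[p,q]}(\eE)=f^\ast\vV$, so composing with $\Gamma_{[p,q]}(\psi)$ produces the required isomorphism $\psi'$. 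The fact that the resulting $(Q_{[p,q]},I)$-representation is $\theta$-semistable of the correct dimension vector over each geometric point is exactly the content of Theorem~\ref{thm:BFHR} applied to $\fF^{ab}$, so the image lies in $h_{[p,q]}$.

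Next I would construct the candidate inverse. Given $(g,\wW,\chi)\in h_{[p,q]}(\Lambda)$, set $\fF\cneq\wp(\wW)$. Proposition~\ref{prop:isom} gives that $\fF$ is $\Lambda$-flat and $\fF^{ab}\cong\wp(\wW^{ab})\cong\wp(g^\ast\vV)\cong g^\ast\wp(\vV)$ via Lemma~\ref{lem:prepare2}(ii), and by Lemma~\ref{lem:vanish} the adjunction $\wp(\vV)=\wp\circ\Gamma_{[p,q]}(\eE)\stackrel{\cong}{\to}\eE$ is an isomorphism, so $g$ factors uniquely through the open immersion $\Upsilon$ as $g=\Upsilon\circ f$ and one obtains an isomorphism $\psi\colon\fF^{ab}\stackrel{\cong}{\to}f^\ast\eE$ by applying $\wp$ to $\chi$ and composing with the adjunction. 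This produces a triple $(f,\fF,\psi)\in h_\alpha(\Lambda)$.

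Finally I would check that these two transformations are mutually inverse. In one direction, starting from $(f,\fF,\psi)$ and applying $\Gamma_{[p,q]}$ then $\wp$, the adjunction morphism $\wp\circ\Gamma_{[p,q]}(\fF)\stackrel{\cong}{\to}\fF$ of Lemma~\ref{lem:vanish} gives the reverse identification, and naturality of the adjunction together with the compatibility of $\psi$ with $\wp\circ\Gamma_{[p,q]}(\eE)\cong\eE$ identifies the resulting $\psi$. In the other direction, starting from $(g,\wW,\chi)$, Proposition~\ref{prop:isom} shows $\Gamma_{[p,q]}\circ\wp(\wW)\stackrel{\cong}{\to}\wW$ and again naturality matches the $\chi$'s. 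The main technical obstacle is keeping track of the various canonical identifications — between $\Gamma_{[p,q]}(f^\ast\eE)$ and $f^\ast\Gamma_{[p,q]}(\eE)$, between the abelianization and the pullback by $\Lambda\to\Lambda^{ab}$, and between the two adjunction isomorphisms — and verifying that the composite isomorphisms defining $\psi$ and $\psi'$ are compatible; but this is essentially a diagram chase using Lemma~\ref{lem:prepare}, Lemma~\ref{lem:prepare2}, and Lemma~\ref{lem:vanish}, not a new computation.
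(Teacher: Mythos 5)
Your proposal is correct and takes essentially the same route as the paper, whose proof simply deduces the statement from the isomorphism $\Upsilon \colon M_{\alpha} \stackrel{\cong}{\to} M_{[p, q]}$ together with Proposition~\ref{prop:isom}; you merely spell out the transport of triples via $\Gamma_{[p, q]}$ and $\wp$ and the adjunction isomorphisms (Lemma~\ref{lem:projective}, Lemma~\ref{lem:vanish}, Proposition~\ref{prop:isom}). The only point to phrase slightly more carefully is the identification $\Upsilon^{\ast}\vV \cong \Gamma_{[p, q]}(\eE)$, which a priori holds only up to a line-bundle twist pulled back from $M_{\alpha}$ since the quiver moduli space represents equivalence classes; one can normalize the choice of universal sheaf $\eE$ so that it holds on the nose, and the paper glosses over this as well.
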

\begin{proof}
The result obviously follows from 
the isomorphism $\Upsilon \colon M_{\alpha} \stackrel{\cong}{\to} M_{[p, q]}$
and Proposition~\ref{prop:isom}. 
\end{proof}
We have the following corollary: 
\begin{cor}\label{cor:NC}
The moduli scheme 
$M_{\alpha}$ 
admits an affine open cover
$\{U_i\}_{i\in \mathbb{I}}$, 
affine NC structures  
$\{U_i^{\n}=(U_i, \oO_{U_i}^{\n})\}_{i\in \mathbb{I}}$
and NC hulls 
$h_{U_i^{\n}} \to h_{\alpha}|_{U_i}$. In particular, 
there exist isomorphisms
 \begin{align}\label{phiU}
\phi_{ij} \colon U_j^{\n}|_{U_{ij}} 
\stackrel{\cong}{\to} U_i^{\n}|_{U_{ij}}
\end{align} 
of NC schemes giving a quasi NC structure on $M_{\alpha}$. 
\end{cor}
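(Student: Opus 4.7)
The plan is to transport the quasi NC structure already constructed on the quiver moduli scheme $M_{Q_{[p,q]}, I, \theta}(\alpha_{[p,q]})$ to $M_{\alpha}$ via the open immersion $\Upsilon$, using the functorial identification from Proposition~\ref{prop:isom2}. The key point is that everything needed has effectively been done already: Corollary~\ref{cor:NCV} provides a quasi NC structure with local NC hulls on the quiver moduli scheme, Proposition~\ref{prop:isom2} identifies the relevant NC functors, and Corollary~\ref{cor:sit} then assembles these pieces into a quasi NC structure on $M_{\alpha}$.

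More concretely, first I would apply Corollary~\ref{cor:NCV} to the quiver with relation $(Q_{[p,q]}, I)$ and the primitive dimension vector $\alpha_{[p,q]}$, obtaining an affine open cover $\{V_j\}_{j \in \mathbb{J}}$ of $M_{Q_{[p,q]},I,\theta}(\alpha_{[p,q]})$, affine NC structures $V_j^{\n} = (V_j, \oO_{V_j}^{\n})$, and, by Proposition~\ref{prop:natural2}, NC hulls
\begin{align*}
h_{V_j^{\n}} \to h_{Q_{[p,q]}, I, \theta}(\alpha_{[p,q]})|_{V_j}.
\end{align*}
Since $M_{[p,q]} \subset M_{Q_{[p,q]},I,\theta}(\alpha_{[p,q]})$ is open, intersecting with $M_{[p,q]}$ and refining if necessary yields an affine open cover $\{V_j'\}_{j \in \mathbb{J}'}$ of $M_{[p,q]}$ with affine NC structures $V_j^{' \n} \cneq V_j^{\n}|_{V_j'}$. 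The restricted natural transforms $h_{V_j^{' \n}} \to h_{[p,q]}|_{V_j'}$ remain NC hulls, as NC hulls restrict to open subschemes.

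Next I would pull everything back by $\Upsilon$. Setting $U_j \cneq \Upsilon^{-1}(V_j')$ and transporting the sheaf of algebras via the isomorphism $\Upsilon$, I obtain affine NC structures $U_j^{\n} = (U_j, \oO_{U_j}^{\n})$ with $(\oO_{U_j}^{\n})^{ab} = \oO_{U_j}$. By Proposition~\ref{prop:isom2}, $\Gamma_{[p,q]}$ gives an isomorphism $h_{\alpha}|_{U_j} \stackrel{\cong}{\to} h_{[p,q]}|_{V_j'}$, so composing with the NC hull $h_{V_j^{' \n}} \to h_{[p,q]}|_{V_j'}$ and the tautological isomorphism $h_{U_j^{\n}} \cong h_{V_j^{' \n}}$ produces natural transformations $h_{U_j^{\n}} \to h_{\alpha}|_{U_j}$ which are formally smooth and isomorphisms on $\cC om$, i.e.\ NC hulls of $h_{\alpha}|_{U_j}$.

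Finally, Corollary~\ref{cor:sit} applied to the functor $h_{\alpha}$, the affine open cover $\{U_j\}$, and the NC hulls constructed above immediately yields the quasi NC structure on $M_{\alpha}$, with the gluing isomorphisms $\phi_{ij}$ furnished by Lemma~\ref{NCh} (uniqueness of affine NC hulls up to non-canonical isomorphism). There is no serious obstacle here, since the hard technical work, namely constructing the derived left adjoint $\wp$ and proving that $\Gamma_{[p,q]}$ is a functorial isomorphism over arbitrary NC bases (Proposition~\ref{prop:isom}), has already been carried out; the only mild point to be careful about is that when refining $\{V_j \cap M_{[p,q]}\}$ to an affine cover, the restriction of the NC structures remains compatible with the NC hull property, which is automatic from the definition.
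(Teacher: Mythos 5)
Your proposal is correct and follows essentially the same route as the paper: the paper's proof of this corollary is exactly the combination of Proposition~\ref{prop:natural2}/Corollary~\ref{cor:NCV} (local NC hulls and the quasi NC structure on the quiver moduli space), Proposition~\ref{prop:isom2} (the functorial identification $h_{\alpha}\cong h_{[p,q]}$ via $\Gamma_{[p,q]}$), and Corollary~\ref{cor:sit}/Lemma~\ref{NCh} to assemble the transported NC hulls into a quasi NC structure on $M_{\alpha}$. Your write-up merely makes explicit the routine restriction/refinement of the cover to $M_{[p,q]}$ and the transport along $\Upsilon$, which the paper leaves implicit.
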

\begin{proof}
The result follows from 
Proposition~\ref{prop:natural2}, 
Corollary~\ref{cor:NCV}
and 
Proposition~\ref{prop:isom2}.
\end{proof}
Let $\eE_i^{\n}$
be the object of $\Coh(X \times U_i^{\n})$
corresponding to $\id \in h_{U_i^{\n}}(U_i^{\n})$
under 
the natural transformation 
$h_{U_i^{\n}} \to h_{\alpha}|_{U_i}$. 
Similarly to Corollary~\ref{cor:NCV}, 
we also have the isomorphisms
\begin{align}\label{isom:Ei}
g_{ij} \colon \phi_{ij}^{\ast}\eE_i^{\n}|_{U_{ij}}
 \stackrel{\cong}{\to}
\eE_j^{\n}|_{U_{ij}}. 
\end{align}

\subsection{Comparison with the formal deformations of sheaves}
Similarly to Subsection~\ref{subsec:compare}, 
we relate the 
quasi NC structure 
in Corollary~\ref{cor:NC}
with formal
non-commutative deformation
algebras of sheaves. 
For $F \in \Coh(X)$, the formal 
non-commutative deformation functor
\begin{align}\label{DefF}
\mathrm{Def}_{F}^{\n} \colon 
\nN^{\rm{loc}} \to \sS et
\end{align}
is defined by sending $(\Lambda, {\bf n})$
to the set of isomorphism classes 
$(\fF, \psi)$, where $\fF \in \Coh(X_{\Lambda})$
is flat over $\Lambda$
and 
$\psi \colon \Lambda/{\bf n} \otimes_{\Lambda} \fF \stackrel{\cong}{\to}F$ 
is an isomorphism in $\Coh(X)$. 
It is well-known that the 
formal commutative 
deformation space of $F$ 
is given by the solution of the Mauer-Cartan equation
of the differential graded algebra
$\dR \Hom(F, F)$, up to gauge equivalence. 
Let
\begin{align}\label{minimal}
(\Ext^{\ast}(F, F), \{m_n\}_{n\ge 2})
\end{align}
be the minimal $A_{\infty}$-algebra 
which is quasi-isomorphic to 
$\dR \Hom(F, F)$. 
The argument similar to~\cite{ESe}
shows that 
the pro-representable
hull of (\ref{DefF})
is described in terms of 
the $A_{\infty}$-structure of (\ref{minimal}). 
Let
\begin{align*}
m_n \colon \Ext^1(F, F)^{\otimes n} \to \Ext^2(F, F)
\end{align*}
be the $n$-th $A_{\infty}$-product, 
and 
\begin{align*}
J_F
\subset \widehat{T}^{\bullet}(\Ext^1(F, F)^{\vee})
\end{align*}
the topological closure of the two sided ideal 
generated by the image of the map
\begin{align*}
\sum_{n\ge 2} m_n^{\vee} 
\colon \Ext^2(F, F)^{\vee} \to \widehat{T}^{\bullet}(\Ext^1(F, F)^{\vee}). 
\end{align*}
The pro-representable hull of (\ref{DefF}) is given 
by the quotient algebra
\begin{align*}
R_{F}^{\n} \cneq 
\widehat{T}^{\bullet}(\Ext^1(F, F)^{\vee})/J_F.  
\end{align*}
The following 
is the main result in this section: 
\begin{thm}\label{main:thm}
There exists a 
quasi NC structure 
$\{U_i^{\n}=(U_i, \oO_{U_i}^{\n})\}_{i\in \mathbb{I}}$
on $M_{\alpha}$ 
such that for any $[F] \in U_i$, 
there is an isomorphism of algebras
$\widehat{\oO}_{U_i, [F]}^{\n} \cong R_{F}^{\n}$. 
\end{thm}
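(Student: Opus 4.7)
The plan is to assemble the theorem from three ingredients that are already in place: the quasi NC structure from Corollary~\ref{cor:NC}, the identification of completions with pro-representable hulls from Lemma~\ref{lem:prohull}, and the description of the pro-representable hull of $\mathrm{Def}_F^{\n}$ in terms of the $A_\infty$-algebra on $\Ext^{\ast}(F,F)$. First I take the quasi NC structure $\{U_i^{\n}\}_{i\in \mathbb{I}}$ together with the NC hulls $\phi_i \colon h_{U_i^{\n}} \to h_{\alpha}|_{U_i}$ produced by Corollary~\ref{cor:NC}. For a closed point $[F] \in U_i$, I apply the localization construction (\ref{loc:funct}) to obtain the functor $h_{\alpha,[F]}^{\rm loc} \colon \nN^{\rm loc} \to \sS et$; Lemma~\ref{lem:prohull} then gives a canonical identification of $\widehat{\oO}_{U_i,[F]}^{\n}$ with a pro-representable hull of $h_{\alpha,[F]}^{\rm loc}$, which moreover is an isomorphism on $\aA rt^{\rm loc}$.

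Next I construct a natural transform
\begin{align*}
\Psi \colon h_{\alpha,[F]}^{\rm loc} \to \mathrm{Def}_F^{\n}
\end{align*}
by sending a triple $(f,\fF,\psi)$ over $(\Lambda,{\bf n})$ to $(\fF, \, \Lambda/{\bf n}\otimes_{\Lambda}\psi)$, and I claim $\Psi$ is a pro-representable hull, i.e.\ formally smooth and bijective on $\mathbb{C}[t]/t^2$. Once this is proved, uniqueness of pro-representable hulls (cf.\ Subsection~\ref{subsec:pro}) combined with the known description from the $A_\infty$-machinery $(\Ext^{\ast}(F,F),\{m_n\})$, which identifies the pro-representable hull of $\mathrm{Def}_F^{\n}$ with $R_F^{\n}$, yields the desired isomorphism $\widehat{\oO}_{U_i,[F]}^{\n}\cong R_F^{\n}$. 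Bijectivity on $\mathbb{C}[t]/t^2 \subset \aA rt^{\rm loc}$ follows from the fact that the usual (commutative) deformation functor $\mathrm{Def}_F = \mathrm{Def}_F^{\n}|_{\aA rt^{\rm loc}}$ is pro-represented by $\widehat{\oO}_{M_\alpha,[F]}$ because of representability of the moduli functor $\mM_\alpha$, together with the already established isomorphism on $\cC om$ coming from the NC hull $\phi_i$.

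The main technical step, and the place I expect the only real work, is formal smoothness of $\Psi$. Given a central extension $0\to J\to \Lambda' \stackrel{p}{\to} \Lambda\to 0$ in $\nN^{\rm loc}$ and a compatible pair consisting of $(f,\fF,\psi) \in h_{\alpha,[F]}^{\rm loc}(\Lambda)$ and an extension $(\fF',\overline{\psi}) \in \mathrm{Def}_F^{\n}(\Lambda')$ restricting to $\Psi(f,\fF,\psi)$, I need to produce a triple $(f',\fF',\psi')$ lifting both. The morphism $f$ lifts uniquely to $f'\colon \Spec\Lambda'^{ab}\to U_i$ by pro-representability of $\mathrm{Def}_F$ applied to $\fF'^{ab}$, and $f'^{\ast}\eE_i^{\n}$ becomes a candidate object on the NC side. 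The remaining issue is to upgrade the isomorphism $\psi \colon \fF^{ab}\simto f^{\ast}\eE_i^{\n}$ to an isomorphism $\psi'\colon \fF'^{ab}\simto f'^{\ast}\eE_i^{\n}$ whose reduction modulo $J$ agrees with $\psi$; this requires that any automorphism of $f^{\ast}\eE_i^{\n}$ arising from the discrepancy lifts to $f'^{\ast}\eE_i^{\n}$. The relevant sheaf-theoretic analog of Lemma~\ref{lem:I}(i) holds because $F$ is stable, so $\Hom(F,F)=\mathbb{C}$ and hence by the standard spectral sequence argument $\hH om_{\oO_{M_\alpha}}(\eE,\eE)\cong \oO_{M_\alpha}$ locally, giving $\Hom(f'^{\ast}\eE_i^{\n}, \jJ\otimes f'^{\ast}\eE_i^{\n})=\jJ$ for any quasi-coherent $\jJ$ on $\Spec\Lambda'^{ab}$. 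Combined with the analog of Lemma~\ref{lem:I}(ii) in the central-extension setting (automorphisms of a flat NC deformation which are identity mod $J$ are multiplications by $1+u$, $u\in J$, which are central and therefore commute with the gluing data), this gives the required lifting and completes formal smoothness. The hardest part is setting up these sheaf-theoretic analogs cleanly in the NC setting, but they follow from purity of the arguments already given in Lemma~\ref{lem:I} once one replaces $\mathrm{Rep}(Q,I)$ by $\Coh(X_\Lambda)$ and invokes $\Hom(F,F)=\mathbb{C}$.
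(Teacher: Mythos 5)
Your proposal is essentially the paper's own proof: the paper likewise takes the quasi NC structure of Corollary~\ref{cor:NC}, shows (exactly as in the proof of Lemma~\ref{lem:complete}, with Lemma~\ref{lem:I}~(i) replaced by the simplicity of stable sheaves, i.e. $\Hom(F,F)=\mathbb{C}$, so that automorphisms of the commutative families are units and lift along Artinian surjections) that the transform $(f,\fF,\psi)\mapsto(\fF,\Lambda/{\bf n}\otimes_{\Lambda}\psi)$ from $h_{\alpha,[F]}^{\mathrm{loc}}$ to $\mathrm{Def}_F^{\n}$ is formally smooth and bijective on $\mathbb{C}[t]/t^2$, and then concludes via Lemma~\ref{lem:prohull}, uniqueness of pro-representable hulls, and the $A_{\infty}$ description of $R_F^{\n}$. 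Your additional appeal to an analog of Lemma~\ref{lem:I}~(ii) is not needed for this step (and note the fiber-Ext vanishing used in Lemma~\ref{lem:I}~(i) is replaced here by the Artinian-base induction), but this does not affect correctness.
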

\begin{proof}
We take the quasi NC structure on $M_{\alpha}$ as in 
Corollary~\ref{cor:NC}. 
Similarly to the proof of Lemma~\ref{lem:complete}, 
the natural transform
\begin{align*}
h_{\alpha~[F]}^{\rm{loc}}\to \mathrm{Def}_{F}^{\n}
\end{align*}
sending triples $(f, \fF, \psi)$ 
to $(\fF, \Lambda/{\bf n}\otimes_{\Lambda}\psi)$
is formally smooth
and isomorphism on $\mathbb{C}[t]/t^2$.  
Hence 
$\widehat{\oO}_{U_i, [F]}^{\n} \cong R_{F}^{\n}$
follows from Lemma~\ref{lem:prohull} and the uniqueness of 
pro-representable hull. 
\end{proof}

\subsection{Partial NC thickening of moduli spaces of sheaves}
Let
$\{U_i^{\n}\}_{i\in \mathbb{I}}$
be a quasi NC structure 
on $M_{\alpha}$
given in Corollary~\ref{cor:NC}, 
and $\eE_i^{\n}$ 
the object in $\Coh(X \times U_{i}^{\n})$
given in (\ref{isom:Ei}). 
For $d\in \mathbb{Z}_{\ge 0}$, 
we set
\begin{align*}
U_i^{d}=(U_i^{\n})^{\le d}, \ 
\eE_i^{d}=(\eE_i^{\n})^{\le d}. 
\end{align*}
The isomorphisms
(\ref{phiU}), (\ref{isom:Ei}) induce isomorphisms
\begin{align*}
\phi_{ij}^{\le d} \colon 
U_j^{d}|_{U_{ij}} \stackrel{\cong}{\to}
U_i^{d}|_{U_{ij}}, \ 
g_{ij}^{\le d} \colon 
\phi_{ij}^{\le d \ast} \eE_i^{d}|_{U_{ij}}
\stackrel{\cong}{\to}
\eE_j^{d}|_{U_{ij}}. 
\end{align*}
Similarly to Subsection~\ref{subsec:partial},
we assume the following: 
\begin{assum}
\label{assum:assum2}
\begin{enumerate}
\item By replacing $\phi_{ij}^{\le d-1}$ if necessary, 
the quasi NC structure 
$\{U_i^{d-1}\}_{i \in \mathbb{I}}$ 
determines an NC structure
$M_{\alpha}^{d-1}$
on $M_{\alpha}$. 
\item By replacing $g_{ij}^{\le d-1}$
if necessary, 
the sheaves $\eE_{i}^{d-1}$ glue to give 
an object $\eE^{d-1} \in \Coh(X \times M_{\alpha}^{d-1})$. 
\end{enumerate}
\end{assum}
We have the following result similar to Corollary~\ref{cor:glue}:
\begin{prop}\label{prop:framed}
Under Assumption~\ref{assum:assum2}, affine 
NC structures $\{U_i^{d}\}_{i \in \mathbb{I}}$
glue to give an NC structure 
$M_{\alpha}^{d}$ on $M_{\alpha}$. 
\end{prop}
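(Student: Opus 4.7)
The plan is to transport the gluing argument of Corollary~\ref{cor:glue} from the moduli of quiver representations to the moduli of sheaves via the equivalence established in Proposition~\ref{prop:isom}. In essence, once the analog of Proposition~\ref{prop:naturald} is established on the sheaf side, Remark~\ref{rmk:NC} delivers the result.

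First, granted Assumption~\ref{assum:assum2}, I would define a functor $h_{\alpha}^{d} \colon \nN_d \to \sS et$ sending $\Lambda \in \nN_d$ to the set of isomorphism classes of triples $(f, \fF, \psi)$, where $f \colon \Spf \Lambda^{\le d-1} \to M_\alpha^{d-1}$ is a morphism of NC schemes, $\fF \in \Coh(X_\Lambda)$ is flat over $\Lambda$, and $\psi \colon \fF^{\le d-1} \stackrel{\cong}{\to} f^*\eE^{d-1}$ is an isomorphism in $\Coh(X_{\Lambda^{\le d-1}})$, with isomorphisms of triples defined as in the quiver case. By Assumption~\ref{assum:assum2}(2) and the isomorphism $\Upsilon$ of (\ref{Gpq}), together with Proposition~\ref{prop:isom}, the data $(M_\alpha^{d-1}, \eE^{d-1})$ correspond under $\Gamma_{[p,q]}$ to an NC thickening $M_{[p,q]}^{d-1}$ of $M_{[p,q]}$ and a representation $\Gamma_{[p,q]}(\eE^{d-1})$ of $(Q_{[p,q]}, I)$ over it. This precisely realizes Assumption~\ref{assum:assum} on the quiver side for the open subschemes $V_i \subset M_{[p,q]}$ identified with $U_i$ via $\Upsilon$.

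Second, I would show that the natural transformation $h_{U_i^d} \to h_\alpha^d|_{U_i}$, sending $g \colon \Spf \Lambda \to U_i^d$ to $(g^{\le d-1}, g^*\eE_i^d, \id)$, is an isomorphism of functors. The idea is to extend the isomorphism $\Gamma_{[p,q]} \colon h_\alpha \stackrel{\cong}{\to} h_{[p,q]}$ of Proposition~\ref{prop:isom2} to the $d$-th order functors: the comparison sends a triple $(f, \fF, \psi)$ to $(\Upsilon \circ f, \Gamma_{[p,q]}(\fF), \Gamma_{[p,q]}(\psi))$, and the inverse is defined by applying $\wp$ levelwise, with flatness and the isomorphism $\wW \stackrel{\cong}{\to} \Gamma_{[p,q]} \circ \wp(\wW)$ checked by the same induction on NC nilpotence as in Proposition~\ref{prop:isom}, using Lemma~\ref{lem:projective} and Lemma~\ref{lem:prepare2}. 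Combining this identification with Proposition~\ref{prop:naturald} for the $V_i$, one obtains that $h_{U_i^d} \to h_\alpha^d|_{U_i}$ is an isomorphism of functors.

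Finally, since the natural transform $h_{U_i^d} \to h_\alpha^d|_{U_i}$ is an isomorphism (not merely an NC hull), Remark~\ref{rmk:NC} implies that the gluing isomorphisms $\phi_{ij}^{\le d}$ on $U_{ij}$ are canonically determined by $h_\alpha^d$, hence automatically satisfy the cocycle condition. The affine NC structures $\{U_i^d\}_{i \in \mathbb{I}}$ therefore glue to produce the desired NC structure $M_\alpha^d$ on $M_\alpha$. The main obstacle I anticipate is the second step, namely verifying that $\Gamma_{[p,q]}$ and $\wp$ genuinely induce mutually inverse bijections between $h_\alpha^d$ and $h_{Q_{[p,q]},I,\theta}^d(\alpha_{[p,q]})|_{M_{[p,q]}}$ in a way compatible with the chosen $(d-1)$-th order data; once this compatibility is pinned down, the proof reduces cleanly to Corollary~\ref{cor:glue} on the quiver side.
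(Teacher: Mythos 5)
Your proposal is correct and follows essentially the same route as the paper: the paper's proof is just the two-line deduction "Corollary~\ref{cor:glue} plus Proposition~\ref{prop:isom}", i.e.\ transport the quiver-side gluing through the equivalence $\Gamma_{[p,q]}$, $\wp$. Your extra work (introducing $h_{\alpha}^{d}$, checking compatibility of the $(d-1)$-th order data, and invoking Remark~\ref{rmk:NC}) simply makes explicit what the paper leaves implicit, and the required ingredients (Lemmas~\ref{lem:projective}, \ref{lem:prepare2}, Propositions~\ref{prop:naturald}, \ref{prop:isom}, \ref{prop:isom2}) are exactly the ones you cite.
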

\begin{proof}
The result follows from Corollary~\ref{cor:glue}
and Proposition~\ref{prop:isom}. 
\end{proof}
Hence 
we always have a 1-thickening 
$M_{\alpha}^1$ of $M_{\alpha}$. 
By Proposition~\ref{prop:framed},
if the obstruction extending $\eE^{d-1}$ to a $d$-th
order $\eE^d$
vanishes, then
$M_{\alpha}$ admits a
$(d+1)$-th 
global NC
thickening. 
We set
\begin{align*}
\iI^d \cneq 
\Ker \left( 
\oO_{M_{\alpha}}^d \twoheadrightarrow \oO_{M_{\alpha}}^{d-1} \right)
\end{align*}
which is a coherent sheaf on $M_{\alpha}$. 
Similarly to Lemma~\ref{lem:obstr}, 
the obstruction extending $\eE^{d-1}$ to $\eE^d$
lies in $H^2(M_{\alpha}, \iI^d)$. 
In particular if $\dim M_{\alpha} \le 1$, 
then 
$\{U_i^{\n}\}_{i\in \mathbb{I}}$ glue to give 
a global NC structure on $M_{\alpha}$.

\subsection{NC structures on framed moduli spaces of sheaves}\label{subsec:NCframed}
We fix $q\gg p \gg 0$ as in the previous subsections. 
A pair $(F, s)$
for $F \in \Coh(X)$ and 
$s \in \Gamma(F(p))$
is called a \textit{framed sheaf}. 
\begin{defi}
A framed sheaf $(F, s)$ is called 
framed stable if 
$F$ is 
a semistable sheaf
and for any 
proper subsheaf $0 \subsetneq F' \subsetneq F$
which contains $s$, 
the inequality (\ref{def:stab}) is strict. 
\end{defi}
The functor
\begin{align*}
\mM_{\alpha, p}^{\star} \colon 
\sS ch/\mathbb{C} \to \sS et
\end{align*}
is defined by sending a $\mathbb{C}$-scheme 
$T$ to the set of isomorphism classes of 
pairs $(\fF, s)$, where 
$\fF \in \mathfrak{M}_{\alpha}(T)$ and
$s \in \Gamma(\fF(p))$ such that 
the pair $(\fF_t, s_t)$
is framed stable for any $t\in T$. 
An isomorphism 
from $(\fF, s)$ to $(\fF', s')$
is an isomorphism of sheaves
$g \colon \fF \to \fF'$
sending $s$ to $s'$. 
It is well-known that 
$\mM_{\alpha, p}^{\star}$ is represented by a projective 
scheme $M_{\alpha, p}^{\star}$ (cf.~\cite[Section~12]{JS}). 

Let $\star$ be the vertex $\{p\}$ in the quiver $Q_{[p, q]}$. 
For a framed sheaf $(F, s)$, 
the definition of $\Gamma_{[p, q]}$ 
naturally gives the 
framed representation  
$(\Gamma_{[p, q]}(F), s)$
of $(Q_{[p, q]}, I)$. 
\begin{lem}\label{lem:fstable}
There exist 
$q\gg p \gg 0$ such that 
for any semistable sheaf $F$ with Hilbert polynomial $\alpha$, 
a framed sheaf $(F, s)$ is framed stable 
if and only if  
$(\Gamma_{[p, q]}(F), s)$ is framed $\theta$-stable. 
\end{lem}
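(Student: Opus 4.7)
The plan is to reduce the claim to the correspondence of Behrend--Fontanine--Hwang--Rose (Theorem~\ref{thm:BFHR}) between subsheaves of $F$ and subrepresentations of $\Gamma_{[p,q]}(F)$, together with the elementary observation that the framing lives naturally at the vertex $\star = p$. Concretely, since $\Gamma(F(p))$ is the $p$-th graded piece of $\Gamma_{[p,q]}(F)$, a section $s$ lies in $\Gamma(F'(p))$ for a subsheaf $F'\subset F$ if and only if $s \in \Gamma_{[p,q]}(F')_p = W'_p$, where $W' \cneq \Gamma_{[p,q]}(F')$. Hence the framing condition transports transparently through $\Gamma_{[p,q]}$, and only the (semi)stability comparison genuinely needs attention.

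By Theorem~\ref{thm:BFHR}, for $q\gg p\gg 0$ the semistability of $F$ is equivalent to the $\theta$-semistability of $\Gamma_{[p,q]}(F)$, and the comparison of reduced Hilbert polynomials corresponds to the $\theta$-slope inequality. To see the correspondence of the strict inequalities explicitly, note that for $F'\subset F$ of the same pure dimension as $F$ (the only case that can occur when $F$ is semistable), the dimensions $w'_p = \alpha(F',p)$ and $w'_q=\alpha(F',q)$ yield
\begin{align*}
\theta \cdot \dim W' \;=\; \alpha(p)\,\alpha(F',q)-\alpha(q)\,\alpha(F',p),
\end{align*}
and a direct computation with the expansions $\alpha(t)=c\,\overline{\alpha}(t)$, $\alpha(F',t)=c'\,\overline{\alpha}(F',t)$ shows that the sign of the right hand side, for $q\gg p\gg 0$, matches the sign of the leading nonzero coefficient of $\overline{\alpha}(F)-\overline{\alpha}(F')$. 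Thus $\theta\cdot \dim W'>0$ is equivalent to $\overline{\alpha}(F',k)<\overline{\alpha}(F,k)$ for $k\gg 0$.

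With these ingredients the proof is direct. For the forward direction, suppose $(F,s)$ is framed stable. Given a proper subrepresentation $W' \subsetneq \Gamma_{[p,q]}(F)$ with $\operatorname{Imm}\tau \subset W'_p$, pass to the associated saturated subsheaf $F'\subset F$ furnished by Theorem~\ref{thm:BFHR}; by the first paragraph $s\in \Gamma(F'(p))$, so framed stability yields $\overline{\alpha}(F',k)<\overline{\alpha}(F,k)$ for $k\gg 0$, which by the above computation gives $\theta \cdot \dim W'>0$. Conversely, if $F'\subsetneq F$ is a proper subsheaf containing $s$, then $W'=\Gamma_{[p,q]}(F')$ is a proper subrepresentation of $\Gamma_{[p,q]}(F)$ containing $\operatorname{Imm}\tau$, and framed $\theta$-stability forces $\theta\cdot \dim W'>0$, which translates back to the strict inequality of reduced Hilbert polynomials.

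The main obstacle is ensuring that the choice of $q\gg p\gg 0$ can be made uniformly for all semistable sheaves $F$ with Hilbert polynomial $\alpha$. This relies on the standard boundedness of the family of semistable sheaves with fixed $\alpha$, which implies that only finitely many Hilbert polynomials $\alpha(F')$ can appear as those of saturated subsheaves of such $F$; on this finite set, one can select a single pair $(p,q)$ making both the subsheaf/subrepresentation correspondence of Theorem~\ref{thm:BFHR} and the sign equivalence of the previous paragraph hold simultaneously. Since exactly the same boundedness is what underlies the choice of $(p,q)$ in Theorem~\ref{thm:BFHR}, one enlarges $(p,q)$ further from that theorem by a finite amount if necessary; this is the only quantitative input beyond what is already invoked in the paper.
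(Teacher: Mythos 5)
Your reduction has a genuine gap in the direction ``$(F,s)$ framed stable $\Rightarrow$ $(\Gamma_{[p,q]}(F),s)$ framed $\theta$-stable''. Given an arbitrary subrepresentation $W'\subsetneq \Gamma_{[p,q]}(F)$ containing $s$, you ``pass to the associated saturated subsheaf $F'\subset F$ furnished by Theorem~\ref{thm:BFHR}'' and then compute $\theta\cdot\dim W'=\alpha(p)\alpha(F',q)-\alpha(q)\alpha(F',p)$. But Theorem~\ref{thm:BFHR} is only the statement that $\Gamma_{[p,q]}$ is an open immersion of moduli stacks; it does not provide a correspondence between subrepresentations and subsheaves, and the displayed identity is false for a general $W'$ — it would require $\dim W'_p=\alpha(F',p)$ and $\dim W'_q=\alpha(F',q)$, i.e.\ that $W'$ agrees with $\Gamma_{[p,q]}(F')$ at the vertices $p$ and $q$, which is exactly what has to be proved. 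If instead one takes $F'$ to be the subsheaf generated by $W'$, then $H^0(F'(i))$ may strictly contain $W'_i$, and the sign of $\theta\cdot\dim W'$ is no longer controlled by $\overline{\alpha}(F')$ versus $\overline{\alpha}(F)$. This is the substantive content of the lemma, not a routine transport of the framing.

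The paper closes this gap differently. Since $\Gamma_{[p,q]}(F)$ is already known to be $\theta$-semistable, the only problematic subrepresentations are those with $\theta\cdot\dim W=0$ containing $s$. One restricts such a $W$ to the two-vertex quiver $Q_{[p,q]}^{\dag}$ (vertices $p,q$ only) and invokes \'Alvarez-C\'onsul--King, Theorem~5.10~(a) and~(c): the Kronecker-type representation $(H^0(F(p)),H^0(F(q)))$ is $\theta^{\dag}$-semistable, and any subrepresentation with $\theta^{\dag}\cdot\dim=0$ is of the form $(H^0(F'(p)),H^0(F'(q)))$ for a subsheaf $F'\subsetneq F$ with the same reduced Hilbert polynomial; since $s\in W_p=H^0(F'(p))$, this contradicts framed stability of $(F,s)$. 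You need this input (or an equivalent subobject correspondence) to make your forward direction work; the boundedness discussion in your last paragraph, while correct, does not substitute for it. The converse direction of your argument (destabilizing subsheaves give destabilizing subrepresentations) is essentially the paper's first paragraph and is fine, though you should restrict to saturated $F'$ with the same reduced Hilbert polynomial so that $h^0(F'(i))=\alpha(F',i)$ for $i=p,q$.
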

\begin{proof}
Let $(F, s)$ be a framed sheaf
such that $F$ is semistable with Hilbert polynomial $\alpha$.
By Theorem~\ref{thm:BFHR}, 
the representation
$\Gamma_{[p, q]}(F)$ 
of $(Q_{[p, q]}, I)$ is $\theta$-semistable. 
Moreover 
any subsheaf $0 \neq F' \subsetneq F$ 
with the same reduced Hilbert polynomials 
gives rise to 
the subrepresentation 
$0\neq W \subsetneq \Gamma_{[p, q]}(F)$
with $\theta \cdot \dim W=0$, 
which contains $s$ if $F'$ does. 
Therefore if $(\Gamma_{[p, q]}(F), s)$ is 
framed $\theta$-stable, then $(F, s)$ must be framed stable.

Conversely, suppose that 
$(\Gamma_{[p, q]}(F), s)$ is not framed $\theta$-stable. 
Then there is a subrepresentation $0\neq W\subsetneq \Gamma_{[p, q]}(F)$ 
of $(Q_{[p, q]}, I)$
which contains $s$
such that $\theta \cdot \dim W=0$. 
Let $Q_{[p, q]}^{\dag}$ be the 
quiver with vertex $\{p, q\}$ and 
$\dim_{\mathbb{C}}{\bf m}_{q-p}$-arrows 
from $p$ to $q$. 
By~\cite[Theorem~5.10 (a)]{ACK}, 
the
representation of $Q_{[p, q]}^{\dag}$
\begin{align*}
\Gamma_{[p, q]}^{\dag}(F)=(H^0(F(p)), H^0(F(q)))
\end{align*}
is $\theta^{\dag}=(\theta_p, \theta_q)$-semistable. 
Moreover $W$ 
gives rise to the $Q_{[p, q]}^{\dag}$ subrepresentation
 $W^{\dag}=(W_p, W_q)$
of $\Gamma_{[p, q]}^{\dag}(F)$
with $\theta^{\dag} \cdot \dim W^{\dag}=0$. 
By~\cite[Theorem~5.10 (c)]{ACK}, 
$W^{\dag}$ is given by $\Gamma_{[p, q]}^{\dag}(F')$
for some $0\neq F' \subsetneq F$
having the same reduced Hilbert polynomials. 
As $s \in W_p=H^0(F'(p))$, 
the pair $(F, s)$ is not framed stable. 
\end{proof}
By Lemma~\ref{lem:fstable},
we have the morphism
\begin{align}\label{gstar}
M_{\alpha, p}^{\star} \to 
M_{Q_{[p, q]}, I, \theta}^{\star}(\alpha_{[p, q]})
\end{align} 
sending $(F, s)$ to $(\Gamma_{[p, q]}, s)$. 
Let $M_{[p, q]}^{\star}$
be the open subscheme of 
$M_{Q_{[p, q]}, I, \theta}^{\star}(\alpha_{[p, q]})$
given by the Cartesian square
\begin{align*}
\xymatrix{
M_{[p, q]}^{\star} \ar@{^{(}->}[r] \ar[d] & 
M_{Q_{[p, q]}, I, \theta}^{\star}(\alpha_{[p, q]}) \ar[d] \\
\mathfrak{M}_{[p, q]} \ar@{^{(}->}[r] &
\mathfrak{M}_{Q_{[p, q]}, I, \theta}(\alpha_{[p, q]}). 
}
\end{align*}
Here the bottom morphism 
is the open immersion, 
and 
the right morphism is forgetting the 
framing. 
The morphism (\ref{gstar})
factors through the morphism
\begin{align}\label{gstar2}
\Upsilon^{\star} \colon 
M_{\alpha, p}^{\star} \to M_{[p, q]}^{\star}. 
\end{align}
\begin{lem}\label{lem:fisom}
The morphism $\Upsilon^{\star}$ is an isomorphism. 
In particular, 
the morphism (\ref{gstar}) is an open immersion  
whose image $M_{[p, q]}^{\star}$ 
consists of union of components of 
$M_{Q_{[p, q]}, I, \theta}^{\star}(\alpha_{[p, q]})$. 
\end{lem}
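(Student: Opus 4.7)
My plan is to construct an explicit inverse to $\Upsilon^{\star}$ at the level of the moduli functors using the left adjoint $\wp$ from Proposition~\ref{prop:isom}. Given a $\mathbb{C}$-scheme $T$ and a $T$-valued point $(\wW,\tau)$ of $M_{[p,q]}^{\star}$, I first apply $\wp$ to $\wW$: because $\wW$ lies in $\mathfrak{M}_{[p,q]}(T)$ by definition of the Cartesian square defining $M_{[p,q]}^{\star}$, the commutative restriction of Proposition~\ref{prop:isom} (namely Proposition~\ref{inv:com}) shows that $\fF \cneq \wp(\wW)$ is an object of $\mathfrak{M}_{\alpha}(T)$ together with a natural isomorphism $\Gamma_{[p,q]}(\fF) \cong \wW$. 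In particular $\wW_p \cong p_{T\ast}\fF(p)$, so the framing $\tau\colon \oO_T \to \wW_p$ corresponds, via the standard adjunction
\begin{align*}
\Hom_{\oO_T}(\oO_T, p_{T\ast}\fF(p)) \cong \Hom_{\oO_{X\times T}}(\oO_{X \times T}, \fF(p)) = \Gamma(X\times T, \fF(p)),
\end{align*}
to a canonical global section $s \in \Gamma(\fF(p))$. This defines a candidate inverse $\Psi \colon M_{[p,q]}^{\star} \to M_{\alpha,p}^{\star}$ sending $(\wW,\tau)$ to $(\fF,s)$.

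Next I verify that $\Psi$ really lands in $M_{\alpha,p}^{\star}$. For any $t \in T$, the fiber $(\fF_t, s_t)$ has associated framed representation $(\Gamma_{[p,q]}(\fF_t), s_t) \cong (\wW_t, \tau_t)$, which is framed $\theta$-stable by hypothesis. Lemma~\ref{lem:fstable} then forces $(\fF_t, s_t)$ to be framed stable, so $(\fF,s)$ is a flat family of framed stable sheaves and $\Psi$ is well-defined. Functoriality of $\Psi$ in $T$ follows from the compatibility of $\wp$ with base change (Lemma~\ref{lem:prepare2}) and the naturality of the adjunction used to convert $\tau$ into $s$.

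The two compositions are identities: $\Upsilon^{\star}\circ\Psi$ sends $(\wW,\tau)$ to $(\Gamma_{[p,q]}\wp(\wW), \tau')$, which equals $(\wW,\tau)$ under the counit isomorphism $\Gamma_{[p,q]}\wp \cong \id$ of Proposition~\ref{prop:isom} (combined with the adjunction square that produced $s$ from $\tau$ in the first place); conversely $\Psi\circ\Upsilon^{\star}$ sends $(F,s)$ to $(\wp\Gamma_{[p,q]}(F), s')$, which equals $(F,s)$ by Lemma~\ref{lem:vanish} together with the fact that the framing on the $\Gamma_{[p,q]}(F)$ side is tautologically the image of $s$. Hence $\Upsilon^{\star}$ is an isomorphism of schemes.

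For the final assertion, observe that $Q_{[p,q]}$ has no loops, so by Theorem~\ref{thm:Kin} (applied to the extended quiver $Q_{[p,q]}^{\diamond}$ as in Proposition~\ref{prop:fN}) the scheme $M_{Q_{[p,q]},I,\theta}^{\star}(\alpha_{[p,q]})$ is projective. The open subscheme $M_{[p,q]}^{\star}$ is therefore proper, hence simultaneously open and closed, so a union of connected components. Since the morphism (\ref{gstar}) factors as the isomorphism $\Upsilon^{\star}$ followed by the inclusion $M_{[p,q]}^{\star}\hookrightarrow M_{Q_{[p,q]},I,\theta}^{\star}(\alpha_{[p,q]})$, it is an open immersion with image $M_{[p,q]}^{\star}$, a union of components. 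The only technically delicate point I anticipate is bookkeeping the identification of framings on the two sides — specifically checking that the adjunction bijection $\tau\leftrightarrow s$ is literally the one induced by $\Gamma_{[p,q]}$ on morphisms, so that the unit/counit of $(\wp,\Gamma_{[p,q]})$ automatically carry framings to framings — but this is a formal consequence of the fact that the identification $\wW_p \cong p_{T\ast}\fF(p)$ is exactly the degree-$p$ piece of the adjunction isomorphism $\Gamma_{[p,q]}\wp(\wW) \cong \wW$.
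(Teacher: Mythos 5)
Your argument is correct and is essentially the paper's own: the paper packages it as the observation that the square formed by $\Upsilon^{\star}$, the inclusion $M_{[p,q]}^{\star}\hookrightarrow M_{Q_{[p,q]},I,\theta}^{\star}(\alpha_{[p,q]})$'s restriction, and the two forgetful maps down to $\mathfrak{M}_{\alpha}\to\mathfrak{M}_{[p,q]}$ is Cartesian (precisely because of Lemma~\ref{lem:fstable}) with an isomorphism on the bottom row (Theorem~\ref{thm:BFHR}), and your explicit construction of the inverse via $\wp$, the adjunction identification $\wW_p\cong p_{T\ast}\fF(p)$, and the transport of framings is exactly the unwinding of that base-change argument. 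One small slip in the last paragraph: being an open subscheme of a projective scheme does not by itself make $M_{[p,q]}^{\star}$ proper; the properness you need comes from the isomorphism $\Upsilon^{\star}$ you have just established together with the projectivity of $M_{\alpha,p}^{\star}$, after which ``a proper open subscheme of a separated scheme is closed'' gives the union-of-components conclusion.
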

\begin{proof}
We have the commutative diagram
\begin{align*}
\xymatrix{
M_{\alpha, p}^{\star} \ar[r]^{\Upsilon^{\star}} \ar[d] &
M_{[p, q]}^{\star} \ar[d] \\
\mathfrak{M}_{\alpha} \ar[r]^{\Gamma_{[p, q]}} &
\mathfrak{M}_{[p, q]}. 
}
\end{align*}
Here the vertical 
morphisms are forgetting 
the framings. 
By Lemma~\ref{lem:fstable}, the above 
diagram is Cartesian. 
Since the bottom morphism is an isomorphism 
by Theorem~\ref{thm:BFHR}, 
the morphism $\Upsilon^{\star}$ 
is also an isomorphism. 
\end{proof}
Let $(\eE, \iota)$ be the 
universal family of framed stable sheaves
on $M_{\alpha, p}^{\star}$, i.e.
\begin{align*}
\eE \in \Coh(X \times M_{\alpha, p}^{\star}), \ 
\iota \in \Gamma(\eE(p)). 
\end{align*}
We define the functor
\begin{align*}
h^{\star}_{\alpha, p} \colon \nN \to \sS et
\end{align*}
by sending $\Lambda \in \nN$ to the 
set of isomorphism classes of triples 
$(f, (\fF, s), \psi)$: 
\begin{itemize}
\item $f$ is a morphism of schemes
$f \colon \Spec \Lambda^{ab} \to M_{\alpha, p}^{\star}$. 
\item $\fF \in \mathfrak{M}_{\alpha}^{\n}(\Lambda)$
and $s \in \Gamma(\fF(p))$. 
\item $\psi$ is an isomorphism 
$\psi \colon (\fF^{ab}, s^{ab}) \stackrel{\cong}{\to}
f^{\ast}(\eE, \iota)$
of framed sheaves. 
\end{itemize}
We also define 
\begin{align*}
h_{[p, q]}^{\star}
\cneq h_{Q_{[p, q]}, I, \theta}^{\star}(\alpha_{[p, q]})|_{M_{[p, q]}^{\star}} 
\colon 
\nN \to \sS et. 
\end{align*}
Here $h_{Q_{[p, q]}, I, \theta}^{\star}(\alpha_{[p, q]})$
is introduced in (\ref{def:hQI}). 
Since the functor 
$\Gamma_{[p, q]}$ takes 
$\mathfrak{M}_{\alpha}^{\n}$ to $\mathfrak{M}_{[p, q]}^{\n}$, 
we have the natural transform
\begin{align}\label{nat:Gamma}
\Gamma_{[p, q]} \colon h_{\alpha, p}^{\star} \to h_{[p, q]}^{\star}. 
\end{align}
defined in an obvious way. 
\begin{prop}\label{prop:framediso}
The natural transform (\ref{nat:Gamma})
is an isomorphism of functors. 
\end{prop}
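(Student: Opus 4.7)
The plan is to construct an explicit inverse to the natural transform $\Gamma_{[p,q]}$ using the functor $\wp$ established in Proposition~\ref{prop:isom} together with the scheme isomorphism $\Upsilon^{\star}$ from Lemma~\ref{lem:fisom}. Since Proposition~\ref{prop:isom} already gives $\Gamma_{[p,q]}\colon \mathfrak{M}_\alpha^{\n}\stackrel{\cong}{\to}\mathfrak{M}_{[p,q]}^{\n}$, the content of the framed statement is essentially that the framing data on both sides matches up under the adjunction.

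First I would unpack (\ref{nat:Gamma}) and check well-definedness on the nose. Given $(f,(\fF,s),\psi)\in h_{\alpha,p}^{\star}(\Lambda)$, the representation $\Gamma_{[p,q]}(\fF)$ is flat over $\Lambda$ by Lemma~\ref{lem:projective}; the element $s\in\Gamma(\fF(p))=\Gamma_{[p,q]}(\fF)_{\star}$ defines a $\Lambda$-linear framing $\tau_s\colon \Lambda\to\Gamma_{[p,q]}(\fF)_{\star}$; the morphism $\Upsilon^{\star}\circ f\colon\Spec\Lambda^{ab}\to M_{[p,q]}^{\star}$ lands in the correct target by Lemma~\ref{lem:fisom}; and $\Gamma_{[p,q]}(\psi)$ descends to a framed isomorphism over $\Lambda^{ab}$ by the commutativity of $\Gamma_{[p,q]}$ with abelization (Lemma~\ref{lem:projective}).

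To build the inverse, take $(f',(\wW,\tau),\psi')\in h_{[p,q]}^{\star}(\Lambda)$, set $\fF:=\wp(\wW)\in\mathfrak{M}_\alpha^{\n}(\Lambda)$ via Proposition~\ref{prop:isom}, and set $f:=\Upsilon^{\star-1}\circ f'$. The adjunction isomorphism $\wW\stackrel{\cong}{\to}\Gamma_{[p,q]}(\fF)$ from Proposition~\ref{prop:isom} restricts in degree $p$ to an identification $\wW_{\star}\cong\Gamma(\fF(p))$, under which $\tau(1)\in\wW_{\star}$ corresponds to some $s\in\Gamma(\fF(p))$. Applying the counit isomorphism $\wp\Gamma_{[p,q]}\cong\id$ of Lemma~\ref{lem:vanish} to $\psi'$, together with compatibility of $\wp$ with abelization (Lemma~\ref{lem:prepare2}(ii)) and the equality $\wp\Gamma_{[p,q]}(\eE,\iota)\cong(\eE,\iota)$ on the universal family, yields the required framed isomorphism $\psi\colon(\fF^{ab},s^{ab})\stackrel{\cong}{\to}f^{\ast}(\eE,\iota)$. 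That the pair $(\fF,s)$ is actually framed stable at each closed point of $\Spec\Lambda^{ab}$ follows because $f$ factors through $M_{\alpha,p}^{\star}$.

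The two constructions are mutually inverse by direct chase: applying $\Gamma_{[p,q]}\circ\wp$ or $\wp\circ\Gamma_{[p,q]}$ to the representation/sheaf data gives back the original by Proposition~\ref{prop:isom} and Lemma~\ref{lem:vanish}, the framing is preserved because both functors are natural and act as the identity on the degree $p$ piece via the adjunction unit/counit, and the scheme components are inverse via $\Upsilon^{\star}$. The only genuinely subtle point is the framing bookkeeping: one must check that the adjunction isomorphisms of Proposition~\ref{prop:isom} carry $\tau$ to $\tau_s$ and back; this is immediate from the fact that the unit $\wW\to\Gamma_{[p,q]}\wp(\wW)$ is a morphism of $(Q_{[p,q]},I)$-representations and hence acts componentwise in each degree. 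There is no substantial obstacle beyond this verification, which is formal once Proposition~\ref{prop:isom} and Lemma~\ref{lem:fisom} are in hand.
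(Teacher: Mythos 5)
Your proposal is correct and is in substance the same argument as the paper's: the paper simply packages your explicit inverse as the observation that the square comparing $h_{\alpha,p}^{\star}\to h_{[p,q]}^{\star}$ with $\mathfrak{M}_{\alpha}^{\n}\to\mathfrak{M}_{[p,q]}^{\n}$ is Cartesian, so the isomorphism of Proposition~\ref{prop:isom} (together with Lemma~\ref{lem:fisom}) immediately transfers to the framed functors. Your unwinding of the framing bookkeeping via the unit/counit of the adjunction is exactly the content hidden in that Cartesian-square statement.
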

\begin{proof}
Similarly to the proof of Lemma~\ref{lem:fisom},
we have the Cartesian diagram
\begin{align*}
\xymatrix{
h_{\alpha, p}^{\star} \ar[r]^{\Gamma_{[p, q]}} \ar[d] &
h_{[p, q]}^{\star} \ar[d] \\
\mathfrak{M}_{\alpha}^{\n} \ar[r]^{\Gamma_{[p, q]}} &
\mathfrak{M}_{[p, q]}^{\n}. 
}
\end{align*}
Here the left vertical arrow is 
sending $(f, (\fF, s), \psi)$ to $\fF$, and the right 
vertical arrow is similar. 
Since the bottom arrow 
is an isomorphism by Proposition~\ref{prop:isom}, 
 the top arrow is also an isomorphism. 
\end{proof}

Finally we obtain the following result: 
\begin{thm}\label{thm:NCframe}
The framed moduli scheme 
$M_{\alpha, p}^{\star}$ has a canonical NC structure 
which represents the functor 
$h_{\alpha, p}^{\star}$. 
\end{thm}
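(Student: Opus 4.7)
The plan is to reduce the statement to Theorem~\ref{thm:framedQ} via the framed analogue of $\Gamma_{[p,q]}$, using Lemma~\ref{lem:fisom} and Proposition~\ref{prop:framediso} as the main transport tools. Fix $q \gg p \gg 0$ so that both Theorem~\ref{thm:BFHR} and Lemma~\ref{lem:fstable} hold simultaneously for the Hilbert polynomial $\alpha$. By Theorem~\ref{thm:framedQ}, the scheme $M_{Q_{[p,q]}, I, \theta}^{\star}(\alpha_{[p,q]})$ carries a canonical NC structure representing $h^{\star}_{Q_{[p,q]}, I, \theta}(\alpha_{[p,q]})$. Since by Lemma~\ref{lem:fisom} the image $M_{[p,q]}^{\star}$ of $\Upsilon^{\star}$ is a union of connected components of $M_{Q_{[p,q]}, I, \theta}^{\star}(\alpha_{[p,q]})$, restricting the NC structure sheaf to the open and closed subset $M_{[p,q]}^{\star}$ produces an NC structure on $M_{[p,q]}^{\star}$ which represents the restricted functor $h_{[p,q]}^{\star} = h^{\star}_{Q_{[p,q]}, I, \theta}(\alpha_{[p,q]})|_{M_{[p,q]}^{\star}}$.

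Next I would transport this NC structure through the isomorphism of commutative schemes $\Upsilon^{\star} \colon M_{\alpha, p}^{\star} \stackrel{\cong}{\to} M_{[p,q]}^{\star}$ from Lemma~\ref{lem:fisom}. Since $\Upsilon^{\star}$ is an isomorphism on the underlying topological spaces, we simply declare the structure sheaf of $M_{\alpha, p}^{\star}$ to be $(\Upsilon^{\star})^{-1}\oO^{\n}_{M_{[p,q]}^{\star}}$. This yields an NC scheme $(M_{\alpha, p}^{\star})^{\n}$ whose abelianization is $M_{\alpha, p}^{\star}$. By construction, its functor of points satisfies
\begin{equation*}
h_{(M_{\alpha, p}^{\star})^{\n}} \cong h_{(M_{[p,q]}^{\star})^{\n}} \cong h_{[p,q]}^{\star}.
\end{equation*}

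Finally, Proposition~\ref{prop:framediso} provides a functorial isomorphism $\Gamma_{[p,q]} \colon h_{\alpha, p}^{\star} \stackrel{\cong}{\to} h_{[p,q]}^{\star}$, so composing with the above identification gives the desired isomorphism $h_{(M_{\alpha, p}^{\star})^{\n}} \cong h_{\alpha, p}^{\star}$, i.e.~the NC structure represents $h_{\alpha, p}^{\star}$. The canonicity of the NC structure follows from the canonicity of the one in Theorem~\ref{thm:framedQ} together with the canonicity of $\Upsilon^{\star}$.

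The main conceptual obstacle is that one should check that the chosen $p$, $q$ are large enough for several different comparisons (stability comparison in Lemma~\ref{lem:fstable}, flatness and vanishing in Lemma~\ref{lem:projective} and Lemma~\ref{lem:vanish}, and the full faithfulness needed for Proposition~\ref{prop:isom}); however these are all finitely many boundedness conditions and can be arranged simultaneously. The only other subtle point is confirming that $M_{[p,q]}^{\star}$ really is a union of components rather than merely open, which is needed for the restricted NC structure to be well-defined as a closed subscheme; this is already recorded in Lemma~\ref{lem:fisom} because $M_{Q_{[p,q]}, I, \theta}^{\star}(\alpha_{[p,q]})$ is projective (no loops in $Q_{[p,q]}^{\diamond}$). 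No gluing argument in the style of Corollary~\ref{cor:NC} is required here, because the framing kills automorphisms (Lemma~\ref{lem:aut}) and hence the local NC hulls automatically promote to a global representable object—this is precisely the content of Theorem~\ref{thm:framedQ} that we are importing.
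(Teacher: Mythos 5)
Your proposal is correct and follows essentially the same route as the paper, whose proof is precisely ``immediate consequence of Theorem~\ref{thm:framedQ} and Proposition~\ref{prop:framediso}''; you have simply filled in the intermediate transport steps (restriction to $M_{[p,q]}^{\star}$ via Lemma~\ref{lem:fisom} and pullback along $\Upsilon^{\star}$) that the paper leaves implicit. The only minor over-caution is that openness of $M_{[p,q]}^{\star}$ already suffices to restrict the NC structure and the restricted functor, so the union-of-components observation, while true, is not strictly needed at that step.
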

\begin{proof}
The result is an immediate consequence of 
Theorem~\ref{thm:framedQ} and Proposition~\ref{prop:framediso}. 
\end{proof}

\section{Examples}\label{sec:NC4}
In this section, we discuss some
examples of non-commutative thickening 
of moduli spaces of sheaves. 
\subsection{Non-commutative moduli spaces of points}
Let $X$
be a smooth projective variety over $\mathbb{C}$. If we take 
$\alpha \in \mathbb{Q}[t]$ to be the constant function 
$\alpha=1$, then we have 
the isomorphism
\begin{align*}
X \stackrel{\cong}{\to} M_{\alpha}
\end{align*}
sending $x \in X$ to the skyscraper sheaf $\oO_x$. 
In this case, the dg-algebra $\dR \Hom(\oO_x, \oO_x)$ is
quasi isomorphic to the 
exterior algebra
\begin{align*}
\dR \Hom(\oO_x, \oO_x) =
\bigoplus_{i\ge 0} \bigwedge^i T_x X[i]. 
\end{align*}
Therefore we have (cf.~\cite{ESe})
\begin{align*}
R_{\oO_x}^{\n}=
\widehat{T}^{\bullet}((T_x X)^{\vee})/\langle u \otimes v-v\otimes u
 \rangle
\end{align*}
for $u, v \in T_x X$. 
Hence $R_{\oO_x}^{\n}$ is
isomorphic to 
$\widehat{\oO}_{X, x}$, which is a commutative algebra. 
Let $\{U_i^{\n}\}_{i\in \mathbb{I}}$ be a quasi NC structure 
given in Theorem~\ref{main:thm}. 
Then $U_i^{\n}=U_i$ in this case, and 
they are of course
glued to give $X$, i.e. 
the
global
 non-commutative moduli space of points
in $X$ is $X$ itself.

\subsection{Non-commutative moduli spaces of contractible curves}
Let $X$ be a quasi projective 3-fold 
and 
\begin{align*}
f \colon X \to Y
\end{align*}
a
flopping contraction which contracts a single smooth
rational curve $C \subset X$
to a point $p\in Y$. 
Let $\alpha \in \mathbb{Q}[t]$ be the
Hilbert polynomial of 
$\oO_C$, 
and $M_{\alpha}$ the 
commutative moduli 
space of stable sheaves with Hilbert polynomial 
$\oO_C$. 
It is well-known that 
$M_{\alpha}$ is topologically one point, 
consisting of $\oO_C$ (cf.~\cite{Katz}). 
Therefore giving a
quasi NC structure on $M_{\alpha}$ is equivalent to 
giving a NC structure, which is 
equivalent to giving 
an NC complete algebra whose abelization is $\oO_{M_{\alpha}}$. 

It is well-known that 
the normal bundle $N_{C/X}$ is 
given by $\oO_C(a) \oplus \oO_C(b)$
such that
\begin{align*}
(a, b) \in \{(-1, -1), (0, -2), (1, -3)\}. 
\end{align*}
The non-commutative moduli space of 
the object $\oO_C$ was studied by Donovan-Wemyss~\cite{WM}. 
By~\cite{WM}, 
the algebra $R_{\oO_C}^{\n}$ is commutative if and only if 
$C$ is not a $(1, -3)$-curve, 
and in this case $R_{\oO_C}^{\n}$ is isomorphic to 
$\mathbb{C}[t]/t^k$ for some $k\in \mathbb{Z}_{\ge 1}$. 
An example of a $(1, -3)$-curve is given by 
the exceptional locus 
of a crepant small resolution of the 
affine singularity 
$Y=\Spec R_k$, where $R_k$ is defined by 
\begin{align*}
R_k=\mathbb{C}[u, v, x, y]/(u^2+v^2 y=x(x^2 + y^{2k+1})). 
\end{align*}
In this case, 
the algebra $R_{\oO_C}^{\n}$ is given by~\cite[Example~3.14]{WM}
\begin{align*}
R_{\oO_C}^{\n} =\mathbb{C} \langle
x, y \rangle /(xy=-yx, x^2=y^{2k+1}).  
\end{align*}
In this case, the global 
NC structure on $M_{\alpha}$ is given by 
$\Spf R_{\oO_C}^{\n}$.

\subsection{Non-commutative moduli spaces of line bundles}
Let $X$ be a smooth projective variety, and 
$\alpha \in \mathbb{Q}[t]$ the 
Hilbert polynomial of $\oO_X$. 
Then we have 
\begin{align*}
M_{\alpha}=\Pic^0(X)
\end{align*}
where 
$\Pic^0(X)$ is 
the moduli space of line bundles on $X$
with $c_1=0$. 
The moduli space $\Pic^0(X)$ is an abelian 
variety with
dimension $\dim H^1(\oO_X)$. 
Let $\{U_i^{\n}\}_{i \in \mathbb{I}}$
be a quasi NC structure in Corollary~\ref{cor:NC}.
In this case, the moduli space $\Pic^0(X)$
is also interpreted as  
the moduli space of pairs $(\lL, s)$, 
where $\lL \in \Pic^0(X)$ 
and $s$ is an 
isomorphism
\begin{align*}
s \colon \mathbb{C} \stackrel{\cong}{\to} \lL|_{p}.
\end{align*}
Note that the different choices of $s$ 
yield isomorphic pairs $(\lL, s)$. 
The data $s$ behaves like a choice of 
a framing 
in Subsection~\ref{subsec:NCframed}. 
Although we omit a detail here, 
the 
proof similar to 
Proposition~\ref{prop:framed}
shows that $\{U_i^{\n}\}_{i\in \mathbb{I}}$ glue to 
give the NC structure on $\Pic^0(X)$. 

In fact, 
this idea was used by Polischchuk-Tu~\cite{PoTu} 
to give a 
global NC smooth thickening of $\Pic^0(X)$
when $H^2(\oO_X)=0$. 
In general, using 
the notion of algebraic NC connections, 
the global NC structure on $\Pic^0(X)$ was 
also constructed by Polishchuk-Tu~\cite[Section~7.1]{PoTu}, 
satisfying the property of Theorem~\ref{main:thm}.

\subsection{Non-commutative moduli spaces of stable sheaves on K3 surfaces}
Let $X$ be a smooth projective K3 surface over $\mathbb{C}$, 
and suppose that $\alpha \in \mathbb{Q}[t]$ is primitive. 
By the result of Mukai~\cite{Mu2}, 
any connected component of 
the moduli space $M_{\alpha}$ is
a holomorphic symplectic manifold. 
For a stable sheaf $[F] \in M_{\alpha}$, 
the dg-algebra $\dR \Hom(F, F)$ is 
known to be formal (cf.~\cite[Proposition~1.3]{ZiZha}), i.e. 
there is a quasi-isomorphism of dg-algebras
\begin{align*}
\dR \Hom(F, F) \cong (\Ext^{\ast}(F, F), m_2). 
\end{align*}
In particular the higher $A_{\infty}$-products
$m_n$ 
of the minimal model (\ref{minimal})
vanish for $n\ge 3$. 
Also the multiplication
\begin{align*}
m_2 \colon \Ext^1(F, F) \times \Ext^1(F, F) \to
\Ext^2(F, F) \cong \mathbb{C}
\end{align*}
gives a holomorphic symplectic form on $M_{\alpha}$. 
Therefore
by choosing
a suitable basis of $\Ext^1(F, F)$, 
the algebra $R_F^{\n}$ is given by
\begin{align}\label{K3:FR}
R_F^{\n}=\frac{\mathbb{C}\langle \langle x_1, x_2, \cdots, x_{2m-1}, x_{2m} 
\rangle \rangle}{\langle [x_1, x_2]+ \cdots + [x_{2m-1}, x_{2m}] \rangle}. 
\end{align}
Let $\{U_i^{\n}\}_{i\in \mathbb{I}}$
be a quasi NC structure on $M_{\alpha}$
given in Corollary~\ref{cor:NC}. 
We do not know whether $\{U_i\}_{i\in \mathbb{I}}$
glue to give a global NC structure on $M_{\alpha}$. 
However by Proposition~\ref{prop:framed}, we at least know 
that the 1-thickenings
$\{U_i^{1}\}_{i\in \mathbb{I}}$ glue to give a 
NC structure
$M_{\alpha}^1$ on $M_{\alpha}$. 
By (\ref{K3:FR}), one 
of the gluing is given by the sheaf of algebras
\begin{align*}
\oO_{M_{\alpha}^1}=
\oO_{M_{\alpha}} \oplus \left(\Omega_{M_{\alpha}}^2/\oO_{M_{\alpha}}\right). 
\end{align*}
Here $\oO_{M_{\alpha}^1} \subset \Omega_{M_{\alpha}}^2$ is 
given by the 
holomorphic symplectic form on $M_{\alpha}$, 
and the algebra structure on $\oO_{M_{\alpha}}^1$
is 
given by 
\begin{align*}
(x, f) \cdot (y, g)=(xy, xg+fy+dx \wedge dy). 
\end{align*}

\subsection{Non-commutative thickening of Hilbert schemes of points}\label{subsec:hilb}
Let $X$ be a projective scheme over $\mathbb{C}$
and take 
$\alpha \in \mathbb{Q}[t]$ 
to be the constant 
function $\alpha=n$ for $n\in \mathbb{Z}_{\ge 1}$. 
Let $M_{\alpha, p}^{\star}$ be the moduli 
space of framed stable 
sheaves given in Subsection~\ref{subsec:NCframed}.
Then $M_{\alpha, p}^{\star}$ is independent of $p$, and 
we have the isomorphism
\begin{align*}
\Hilb^n(X) \stackrel{\cong}{\to} M_{\alpha, p}^{\star}
\end{align*} 
sending $Z \subset X$ to 
$(\oO_Z, s)$ where 
$s \in H^0(\oO_Z)$ is the canonical surjection
$\oO_X \twoheadrightarrow \oO_Z$. 
Here $\Hilb^n(X)$ is the Hilbert scheme of $n$-points, 
parameterizing zero
dimensional subschemes $Z \subset X$ with length $n$. 
By Theorem~\ref{thm:NCframe}, 
the Hilbert scheme of points $\Hilb^n(X)$ has a canonical 
NC structure. 
For example, 
one can check that the
NC structure on $\Hilb^2(\mathbb{C}^2)$
induced by the open immersion 
$\Hilb^2(\mathbb{C}^2) \subset \Hilb^2(\mathbb{P}^2)$
coincides with the one 
given in Subsection~\ref{subsec:anex}. 
\begin{rmk}
If $X$ is non-singular with $\dim X \ge 2$ and $H^1(\oO_X)=0$, 
then $\Hilb^n(X)$ is also
regarded as a moduli space of 
unframed sheaves
by associating
a zero dimensional subscheme $Z \subset X$
with the ideal sheaf $I_Z \subset \oO_X$. 
However a quasi NC structure on $\Hilb^n(X)$
given as the unframed moduli of sheaves 
is in general different from the
above NC structure. 
\end{rmk}

\providecommand{\bysame}{\leavevmode\hbox to3em{\hrulefill}\thinspace}
\providecommand{\MR}{\relax\ifhmode\unskip\space\fi MR }
\providecommand{\MRhref}[2]{%
  \href{http://www.ams.org/mathscinet-getitem?mr=#1}{#2}
}
\providecommand{\href}[2]{#2}


\begin{thebibliography}{PTVV13}

\bibitem[ACK07]{ACK}
L.~\'Alvarez-C\'onsul and A.~King, \emph{A functorial construction of moduli of
  sheaves}, Invent.~Math.~ \textbf{168} (2007), 613--666.

\bibitem[BBBBJ]{BBBJ}
O.~Ben-Bassat, C.~Brav, V.~Bussi, and D.~Joyce, \emph{A '{D}arboux {T}heorem'
  for shifted symplectic structures on derived {A}rtin stacks, with
  applications}, preprint, arXiv:1312.0090.

\bibitem[BFHR14]{BFHR}
K.~Behrend, I.~C. Fontanine, J.~Hwang, and M.~Rose, \emph{The derived moduli
  space of stable sheaves}, Algebra and Number Theory \textbf{8} (2014),
  781--812.

\bibitem[Bri02]{Br1}
T.~Bridgeland, \emph{Flops and derived categories}, Invent. Math \textbf{147}
  (2002), 613--632.

\bibitem[Che02]{Ch}
J-C. Chen, \emph{Flops and equivalences of derived categories for three-folds
  with only {G}orenstein singularities}, J.~Differential.~Geom \textbf{61}
  (2002), 227--261.

\bibitem[DW]{WM}
W.~Donovan and M.~Wemyss, \emph{Noncommutative deformations and flops},
  preprint, arXiv:1309.0698.

\bibitem[ELO09]{ELO}
A.~Efimov, V.~Lunts, and D.~Orlov, \emph{Deformation theory of objects in
  homotopy and derived categories. {I}. {G}eneral theory}, Adv.~Math.~
  \textbf{222} (2009), 359--401.

\bibitem[ELO10]{ELO2}
\bysame, \emph{Deformation theory of objects in homotopy and derived
  categories. {II}. {P}ro-representability of the deformation functor},
  Adv.~Math.~ \textbf{224} (2010), 45--102.

\bibitem[ELO11]{ELO3}
\bysame, \emph{Deformation theory of objects in homotopy and derived
  categories. {III}. {A}belian categories}, Adv.~Math.~ \textbf{226} (2011),
  3857--3911.

\bibitem[Eri10]{Erik}
E.~Eriksen, \emph{Computing noncommutative deformations of presheaves and
  sheaves of modules}, Canad.~J.~Math.~ \textbf{62} (2010), 520--542.

\bibitem[FK01]{FoKa}
I.~C. Fontanine and M.~Kapranov, \emph{Derived {Q}uot schemes}, Annales
  scientifiques de l'\'{E}cole {N}ormale {S}up\'erieure \textbf{34} (2001),
  403--440.

\bibitem[HL97]{Hu}
D.~Huybrechts and M.~Lehn, \emph{Geometry of moduli spaces of sheaves}, Aspects
  in Mathematics, vol. E31, Vieweg, 1997.

\bibitem[Hor05]{Ho}
P.~Horja, \emph{Derived {C}ategory {A}utomorphisms from {M}irror {S}ymmetry},
  Duke Math.~J.~ \textbf{127} (2005), 1--34.

\bibitem[JS12]{JS}
D.~Joyce and Y.~Song, \emph{A theory of generalized {D}onaldson-{T}homas
  invariants}, Mem.~Amer.~Math.~Soc.~ \textbf{217} (2012).

\bibitem[Kap98]{Kap17}
M.~Kapranov, \emph{Noncommutative geometry based on commutator expansions},
  J.~Reine Angew.~Math.~ \textbf{505} (1998), 73--118.

\bibitem[Kat08]{Katz}
S.~Katz, \emph{Genus zero {G}opakumar-{V}afa invariants of contractible
  curves}, J.~Differential.~Geom.~ \textbf{79} (2008), 185--195.

\bibitem[Kin94]{Kin}
A.~King, \emph{Moduli of representations of finite-dimensional algebras},
  Quart.~J.~Math.~Oxford Ser.(2) \textbf{45} (1994), 515--530.

\bibitem[KS]{K-S}
M.~Kontsevich and Y.~Soibelman, \emph{Stability structures, motivic
  {D}onaldson-{T}homas invariants and cluster transformations}, preprint,
  arXiv:0811.2435.

\bibitem[Lau02]{Lau}
O.~Laudal, \emph{Noncommutative deformations of modules}, Homology Homotopy
  Appl \textbf{4} (2002), 357--396.

\bibitem[Mil71]{Mil}
J.~Milnor, \emph{Introduction to {A}lgebraic {$K$}-group}, Princeton
  Univ.~Press, 1971.

\bibitem[Muk81]{Mu1}
S.~Mukai, \emph{Duality between ${D}({X})$ and ${D}(\hat{X})$ with its
  application to picard sheaves}, Nagoya Math.~J.~ \textbf{81} (1981),
  101--116.

\bibitem[Muk87]{Mu2}
\bysame, \emph{On the moduli space of bundles on ${K}$3 surfaces ${I}$}, Vector
  {B}undles on {A}lgebraic Varieties, M.~F.~Atiyah et al.~,Oxford University
  Press (1987), 341--413.

\bibitem[Orl09]{Orsin}
D.~Orlov, \emph{Derived categories of coherent sheaves and triangulated
  categories of singularities}, Algebra, arithmetic, and geometry: in honor of
  Yu.~I.~Manin, Progr.~Math.~ \textbf{270} (2009), 503--531.

\bibitem[PT14]{PoTu}
A.~Polishchuk and J.~Tu, \emph{D{G}-resolutions of {NC}-smooth thickenings and
  {NC}-{F}ourier-{M}ukai transforms}, Math.~Ann.~ \textbf{360} (2014), 79--156.

\bibitem[PTVV13]{PTVV}
T.~Pantev, B.~To$\ddot{\textrm{e}}$n, M.~Vaquie, and G.~Vezzosi, \emph{Shifted
  symplectic structures}, Publ.~Math.~IHES \textbf{117} (2013), 271--328.

\bibitem[Sch68]{Schle}
M.~Schlessinger, \emph{Functors on {A}rtin rings}, Trans.~Amer.~Math.~Soc.~
  \textbf{130} (1968), 208--222.

\bibitem[Seg08]{ESe}
E.~Segal, \emph{The {$A_{\infty}$} deformation theory of a point and the
  derived categories of local {C}alabi-{Y}aus}, J.~Algebra \textbf{320} (2008),
  3232--3268.

\bibitem[Tho00]{Thom}
R.~P. Thomas, \emph{A holomorphic {C}asson invariant for {C}alabi-{Y}au 3-folds
  and bundles on ${K3}$-fibrations}, J.~Differential.~Geom \textbf{54} (2000),
  367--438.

\bibitem[Toda]{Todnc2}
Y.~Toda, \emph{Non-commutative virtual structure sheaves}, in preparation.

\bibitem[Todb]{Todw}
\bysame, \emph{Non-commutative width and {G}opakumar-{V}afa invariants},
  arXiv:1411.1505.

\bibitem[TV07]{Toen2}
B.~To$\ddot{\textrm{e}}$n and M.~Vaqui{\'e}, \emph{Moduli of objects in
  dg-categories}, Ann. Sci. \'Ecole Norm \textbf{40} (2007), 387--444.

\bibitem[Wem]{Wemy}
M.~Wemyss, \emph{Aspects of the {H}omological {M}inimal {M}odel {P}rogram},
  arXiv:1411.7189.

\bibitem[Zha12]{ZiZha}
Z.~Zhang, \emph{A {N}ote on {F}ormality and {S}ingularities of {M}oduli
  {S}paces}, Moscow Mathematical Journal \textbf{12} (2012), 863--879.

\end{thebibliography}

Kavli Institute for the Physics and 
Mathematics of the Universe, University of Tokyo,
5-1-5 Kashiwanoha, Kashiwa, 277-8583, Japan.

\textit{E-mail address}: yukinobu.toda@ipmu.jp

\end{document}